\def\vbar{\mathchoice{\vrule height6.3ptdepth-.5ptwidth.8pt\kern- .8pt}
{\vrule height6.3ptdepth-.5ptwidth.8pt\kern-.8pt} {\vrule
height4.1ptdepth-.35ptwidth.6pt\kern-.6pt} {\vrule
height3.1ptdepth-.25ptwidth.5pt\kern-.5pt}}
\def\fudge{\mathchoice{}{}{\mkern.5mu}{\mkern.8mu}}
\def\bbc#1#2{{\rm \mkern#2mu\vbar\mkern-#2mu#1}}
\def\bbb#1{{\rm I\mkern-3.5mu #1}}
\def\bba#1#2{{\rm #1\mkern-#2mu\fudge #1}}
\def\bb#1{{\count4=`#1 \advance\count4by-64 \ifcase\count4\or\bba
A{11.5}\or \bbb B\or\bbc C{5}\or\bbb D\or\bbb E\or\bbb F \or\bbc
G{5}\or\bbb H\or \bbb I\or\bbc J{3}\or\bbb K\or\bbb L \or\bbb
M\or\bbb N\or\bbc O{5} \or \bbb P\or\bbc C{5}\or\bbb B\or\bbc
S{4.2}\or\bba T{10.5}\or\bbc U{5}\or \bba V{12}\or\bba
W{16.5}\or\bba X{11}\or\bba Y{11.7}\or\bba Z{7.5}\fi}}
\newtheorem{df}{Definition}[section]
\newtheorem{thm}{Theorem}[section]
\newtheorem{rem}{Remark}[section]
\newtheorem{example}[thm]{Example}
\newtheorem{prop}{Proposition}[section]
\begin{document}
\date{}

\title{\bf Poisson superbialgebras}\author{\bf Imed Basdouri,  Mohamed Fadous, Sami Mabrouk, and Abdenacer Makhlouf}
\author{{ Imed Basdouri$^{1}$
 \footnote { E-mail: basdourimed@yahoo.fr}
,\  Mohamed Fadous$^{2}$
    \footnote { E-mail:  mohamedfadous201001@gmail.com}
,\  Sami Mabrouk$^{1}$
 \footnote { E-mail: mabrouksami00@yahoo.fr }
\ and Abdenacer Makhlouf$^{3}$
 \footnote { E-mail: abdenacer.makhlouf@uha.fr $($Corresponding author$)$}
}\\
{\small 1.  University of Gafsa, Faculty of Sciences, 2112 Gafsa, Tunisia} \\
{\small 2. University of Sfax, Faculty of Sciences,  BP
1171, 3038 Sfax, Tunisia }\\
{\small 3.~ Universit\'e de Haute Alsace, IRIMAS - D\'epartement de Math\'ematiques,}\\ {\small 18, rue des fr\`eres Lumi\`ere,
F-68093 Mulhouse, France}}
\date{}
 \maketitle

 \begin{abstract}
We introduce the notion of Poisson superbialgebra as an analogue of  Drinfeld's Lie superbialgebras. We extend various known constructions dealing with representations on Lie superbialgebras to Poisson superbialgebras. We introduce  the notions of Manin triple of Poisson superalgebras and Poisson superbialgebras and show the equivalence between them in terms of  matched pairs of Poisson superalgebras. 
A combination of the classical Yang-Baxter equation and the associative Yang-Baxter equation is discussed in this framework. Moreover, we introduce notions of $\mathcal{O}$-operator of weight $\lambda\in\mathbb{K}$  of a Poisson superalgebra and post-Poisson superalgebra and interpret the close relationships between them and Poisson superbialgebras.
\end{abstract}

\textbf{Keywords}:  Poisson superbialgebra, $\mathcal{O}$-operator, representation, post-Poisson superalgebra.

\textbf{Mathematics Subject Classification} (2020): 17B63, 17B60, 17B70, 17B80.
\maketitle

\section*{Introduction}

A Poisson algebra is both a Lie algebra and a commutative associative algebra which are compatible in a certain sense. Poisson algebras play important roles in many fields in mathematics and mathematical physics, such as the Poisson geometry, integrable systems, non-commutative (algebraic or differential) geometry, and so on (see \cite{Vaisman} and the references therein).\

As generalizations of Lie algebras, Lie superalgebras were introduced motivated by applications to physics and to deformations of Lie algebras, especially Lie algebras of vector fields. The notion of Lie superalgebras was firstly introduced in  \cite{KAC}. More precisely,  Lie superalgebras are $\mathbb{Z}_2$-graded version of a Lie algebras, where  as the Jacobi identity is replaced by a superJacobi
identity given by $$(-1)^{{|x|}{|z|}}[x,[y,z]]
+(-1)^{{|z|}{|y|}}[z,[x,y]]+(-1)^{{|y|}{|x|}}[y,[z,x]]=0.$$
Superization of  Poisson algebras was considered in  classical dynamics of fermion fields and classical spin-1/2 particles, and also   used in the BRST and Batalin-Vilkovisky formalism. It is also related  to Poisson supermanifolds in differential geometry  \cite{Kostant,Kosmann-Schwarzbach0,Kosmann-Schwarzbach}. 

On the other hand, both Lie superalgebras and (commutative) associative superalgebras have a known theory of bialgebras which have been applied to a lot of fields. Lie bialgebras, which were introduced by Drinfeld in the early 1980s for studying the solutions of classical Yang-Baxter equation, are now well established as the infinitesimalisation of quantum groups. In the case
of associative algebras, Joni and Rota introduced the notion of infinitesimal bialgebra in order to provide an algebraic framework for the calculus of divided differences  \cite{Joni-Rota}.  With an additional supersymmetric solutions of the associative Yang-Baxter equation, it is called an associative D-bialgebra, see also   balanced infinitesimal bialgebra  or  antisymmetric infinitesimal bialgebra. All  these
notions are analogues of a Lie bialgebra.   Aguiar developed a systematic theory for infinitesimal bialgebras from this point of view \cite{M. Aguiarrr,M. Aguiar 2,M. Aguiar}. 

The aim of this paper is to study  Poisson superbialgebras from these perspectives and also dealing with matched pairs,  manin triples and $\mathcal{O}$-operators.
The paper is organized as follows. In the first section we provide some basic definitions and in Section  2 we  discuss  a characterization of Poisson superalgebras with one operation and several constructions related to  representations.   In Section 3, we   define various algebraic structures connected to  Poisson superbialgebras. In Section 4, we introduce  the notions of Manin triple of Poisson superalgebras and Poisson superbialgebras and then give the equivalence between them in terms of  matched pairs of Poisson superalgebras. In Section 5, we consider the coboundary cases and discuss Poisson Yang-Baxter equation  (PYBE) which is a combination of Classical Yang-Baxter equation (CYBE) and associative Yang-Baxter equation (AYBE). In Section 6, we introduce notions of $\mathcal{O}$-operator of weight $\lambda\in\mathbb{K}$  of a Poisson superalgebra and post-Poisson superalgebra and interpret the close relationships between them and Poisson superbialgebras.

\section{Definitions and Preliminaries}
First, let us start by fixing some definitions and notations.
 Let $\mathcal{A}=\mathcal{A}_{\bar{0}}\oplus\mathcal{A}_{\bar{1}}$ be a  $\mathbb{Z}_2$-graded vector space over an arbitrary field $\mathbb{K}$ of characteristic 0. In the sequel, we will consider only elements which are  $\mathbb{Z}_{2}$-homogeneous. For $x\in\mathcal{A}$, we denote by ${|x|}\in\mathbb{Z}_{2}$  its parity, i.e., $x\in \mathcal{A}_{|x|}$.
We denote by $\tau$ the super-twist map of $\mathcal{A}\otimes\mathcal{A}$, namely 
$
 \tau(x\otimes y)=(-1)^{{|x|}{|y|}}y\otimes x
$
for $x,y\in \mathcal{A}.$
The super-cyclic map $\xi$ permutes the coordinates of $\mathcal{A}\otimes\mathcal{A}\otimes\mathcal{A}$, it  is defined as
\begin{equation*}
\xi=(id\otimes\tau)\cdot(\tau\otimes id) : x\otimes y\otimes z\mapsto (-1)^{{|x|}({|y|}+{|z|})}y\otimes z\otimes x,
\end{equation*}
for $x, y, z\in\mathcal{A}$, where $id$ is the identity map on $\mathcal{A} $. We denote by $\mathfrak{L^\ast}=$Hom$(\mathcal{A}, \mathbb{K})$ the linear dual of $\mathcal{A}$. For $\phi\in\mathfrak{L^\ast}$ and $x\in\mathcal{A}$, we often use the adjoint notation $\langle\phi, x\rangle$ for $\phi(x)\in\mathbb{K}$.\\
For a linear map $\Delta, \  \Delta: \mathcal{A}\rightarrow \mathcal{A}\otimes\mathcal{A}$ (comultiplication), we use Sweedler's notation $\Delta(x)=\sum_{(x)} x_{1}\otimes x_{2}$ for $x\in\mathcal{A}$.
We will often omit the summation sign $\sum_{(x)}$ to make it simpler.
The parity $|r|$ of $r\in\mathcal{A}^{\otimes2}$ is defined as follows : since we assume $r$  homogenous, there exists $|r|\in\mathbb{Z}_{2}$, such that $r$ can be written as $r=\sum r_{1}\otimes r_{2}\in\mathcal{A}^{\otimes2}$, $r_{1}, r_{2}$ are homogenous elements with $|r|=|r_{1}|+|r_{2}|$.

\begin{df} A \emph{Lie superalgebra} is a pair $(\mathcal{A}, [\cdot ,\cdot ])$ consisting of a superspace $\mathcal{A}$ such that $[\mathcal{A}_{i} ,\mathcal{A}_{j} ]\subset\mathcal{A}_{i+j}$ equipped with a bilinear map $[\cdot ,\cdot ]:\mathcal{A} \times \mathcal{A} \rightarrow \mathcal{A}$ satisfying
\begin{eqnarray}\label{701}
[x,y]=-(-1)^{{|x|}{|y|}}[y,x],
\end{eqnarray}
\begin{eqnarray}\label{702}
(-1)^{{|x|}{|z|}}[x,[y,z]]+(-1)^{{|z|}{|y|}}[z,[x,y]]+(-1)^{{|y|}{|x|}}[y,[z,x]]=0,
\end{eqnarray}
for all homogeneous elements $x,y,z$ in $\mathcal{A}$.\\
\end{df}

\begin{df} A commutative associative superalgebra is a pair $(\mathcal{A}, \mu)$ consisting of a superspace $\mathcal{A}$ an even bilinear map $\mu:\mathcal{A}\times\mathcal{A}\longrightarrow \mathcal{A}$  satisfying
\begin{eqnarray}
\ \ \ \mu(x, y)=(-1)^{{|x|}{|y|}}\mu(y, x),
\end{eqnarray}
\begin{eqnarray}
\mu(x, \mu(y, z))=\mu(\mu(x, y), z) \ \ \ \ \ (associativity),
\end{eqnarray}
for all $x,y$ in $\mathcal{A}$.
\end{df}

\begin{df} A Poisson superalgebra is a triple $(\mathcal{A}, \{,\}, \mu)$ consisting of a superspace $\mathcal{A}$, an even bilinear
map $\{,\} : \mathcal{A}\times \mathcal{A} \longrightarrow \mathcal{A}$ and $\mu: \mathcal{A}\times \mathcal{A} \longrightarrow \mathcal{A}$, satisfying
\begin{enumerate}
\item $(\mathcal{A}, \{,\})$ is a Lie superalgebra,
\item $(\mathcal{A}, \mu)$ is a commutative associative superalgebra,
\item for all $x, y \in\mathcal{A}$ :
\begin{eqnarray}\label{awww}
\{x,\mu(y, z)\} = \mu(\{x, y\},z)+(-1)^{{|x|}{|y|}}\mu(y, \{x,z\}).
\end{eqnarray}
\end{enumerate}
\end{df}

The condition (\ref{awww}) expresses the compatibility between the Lie bracket $\{,\}$ and the associative superalgebra product $\mu$, it can be written equivalently as
\begin{eqnarray}
\{\mu(x, y), z\} = \mu(x, \{y, z\})+(-1)^{{|x|}{|y|}}\mu(y, \{x,z\}).
\end{eqnarray}

A homomorphism between two Poisson superalgebras is defined as a linear map between two Poisson superalgebras preserving the corresponding operations.

\section{Modules and matched pairs of Poisson superalgebras}
First, we recall how to construct a Lie superalgebra (or a commutative associative superalgebra) structure on the direct sum of two Lie superalgebra (or two commutative associative superalgebra) such that both of them are sub-superalgebra.\\

Let $\cdot:(x, y)\longrightarrow x\cdot y$ be an even bilinear map on the superspace $\mathcal{A}$. The associator $as$ of $\cdot$ is the trilinear map on $\mathcal{A}$ given by $$as(x, y, z)=(x\cdot y)\cdot z-x\cdot(y\cdot z).$$

\begin{prop} Let $(\mathcal{A}, \cdot)$ be a $\mathbb{K}$-superalgebra. Define the $\mathcal{A}$-valued operations $\{,\}$ and $\bullet$ on $\mathcal{A}\times\mathcal{A}$ by
\begin{eqnarray}
\{x, y\}=\frac{1}{2}(x\cdot y-(-1)^{{|x|}{|y|}}y\cdot x),
\end{eqnarray}
\begin{eqnarray}
x\bullet y=\frac{1}{2}(x\cdot y+(-1)^{{|x|}{|y|}}y\cdot x).
\end{eqnarray}
Then $(\mathcal{A}, \{,\}, \bullet)$ is a Poisson superalgebra if and only if the operation $x\cdot y$ satisfies the identity: $$ 3\ as(x, y, z)=(-1)^{{|y|}{|z|}}(x\cdot z)\cdot y-(-1)^{{|x|}{|z|}+{|y|}{|z|}}(z\cdot x)\cdot y+(-1)^{{|x|}{|y|}+{|x|}{|z|}}(y\cdot z)\cdot x-(-1)^{{|x|}{|y|}}(y\cdot x)\cdot z.$$
\end{prop}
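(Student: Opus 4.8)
The plan is to verify the equivalence by unfolding the definition of a Poisson superalgebra on the triple $(\mathcal{A},\{,\},\bullet)$ and translating each of its three axioms into a condition on the single operation $\cdot$. First I would record the elementary observation that, whatever $\cdot$ is, the bracket $\{x,y\}=\frac12(x\cdot y-(-1)^{|x||y|}y\cdot x)$ is automatically super-antisymmetric and $\bullet$ is automatically super-commutative, and both are even; so conditions \eqref{701} and the (super)commutativity axiom hold with no hypothesis on $\cdot$. What must be extracted from the stated identity are therefore exactly three things: the super-Jacobi identity for $\{,\}$, the associativity of $\bullet$, and the Leibniz-type compatibility \eqref{awww}.

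The technical heart is a substitution computation. I would expand each of the three remaining axioms, written purely in terms of $\cdot$, by replacing $\{,\}$ and $\bullet$ with their formulas and collecting the eight (or fewer, after using evenness) resulting products $(x\cdot y)\cdot z$, $x\cdot(y\cdot z)$, and their $S_3$-permutations, each carrying a Koszul sign. The cleanest route is to introduce the \emph{right-hand minus left-hand} defect of the displayed identity — call it $D(x,y,z)$ — and show that the super-Jacobiator of $\{,\}$, the associator defect of $\bullet$, and the Leibniz defect of \eqref{awww} are each a fixed $\mathbb{K}$-linear combination (with coefficients like $\tfrac14$ or $\tfrac18$) of $D$ evaluated at suitable permutations of $(x,y,z)$. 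Concretely I would show: (i) $\mathrm{Jac}_{\{,\}}(x,y,z)$ equals (up to a nonzero scalar and signs) the super-alternating sum of $D$ over the cyclic permutations; (ii) $as_\bullet(x,y,z)$ together with its reversed version is, modulo super-commutativity, the super-symmetric counterpart built from $D$; and (iii) \eqref{awww} reduces to a mixed symmetrization of $D$. Conversely, solving this small linear system over $\mathbb{K}[S_3]$ shows that if all three Poisson axioms hold then $D\equiv 0$, i.e. the stated identity holds; here one uses that $\operatorname{char}\mathbb{K}=0$ so the relevant rational coefficients are invertible.

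The main obstacle I anticipate is purely bookkeeping: keeping the Koszul signs consistent through roughly a dozen reorderings, since every transposition of adjacent homogeneous arguments contributes a sign and the identity in the statement already mixes three distinct sign patterns $(-1)^{|y||z|}$, $(-1)^{|x||z|+|y||z|}$, $(-1)^{|x||y|+|x||z|}$, $(-1)^{|x||y|}$. To control this I would fix once and for all the convention that all permutations are read as elements of $S_3$ acting on positions with the associated sign $\varepsilon(\sigma;x,y,z)$, prove the one lemma that $\{x,y\}$ and $\bullet$ behave correctly under $\tau$, and then let the signs propagate automatically. I do not expect any conceptual difficulty beyond this: the non-super case of this characterization of Poisson algebras is classical, and the super version is obtained by decorating each monomial with its Koszul sign, so after the sign conventions are pinned down the verification is a finite, deterministic expansion that I would carry out and then summarize rather than display in full.
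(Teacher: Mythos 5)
Your proposal is correct and follows essentially the same route as the paper: both reduce the three nontrivial Poisson axioms (super-Jacobi, associativity of $\bullet$, and the Leibniz compatibility \eqref{awww}) to Koszul-signed linear combinations of associators of $\cdot$ over permutations of $(x,y,z)$, the super-antisymmetry of $\{,\}$ and super-commutativity of $\bullet$ being automatic. The only difference worth noting is that your plan explicitly handles the ``only if'' direction by inverting the resulting linear system over $\mathbb{K}$ (using $\operatorname{char}\mathbb{K}=0$), whereas the paper's proof only displays the expansions for the ``if'' direction and asserts they vanish; your organization via the defect $D$ makes that converse step cleaner but does not change the substance of the argument.
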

\begin{proof} For any $x, y, z\in\mathcal{A}$, we will check that the $(\mathcal{A}, \{,\})$ is a Lie superalgebra. We have
\begin{eqnarray*}
 && (-1)^{{|x|}{|z|}}\{x, \{y, z\}\}+(-1)^{{|y|}{|x|}}\{y, \{z, x\}\}+(-1)^{{|z|}{|y|}}\{z, \{x, y\}\} 
 \\ && 
 =(-1)^{{|x|}{|y|}}\{x, \frac{1}{2}(y\cdot z-(-1)^{{|y|}{|z|}}z\cdot y)\}+
(-1)^{{|y|}{|x|}}\{y, \frac{1}{2}(z\cdot x-(-1)^{{|z|}{|x|}}z\cdot x)\}
\\ &&  \quad 
+ (-1)^{{|z|}{|y|}}\{z, \frac{1}{2}(x\cdot y-(-1)^{{|x|}{|y|}}y\cdot x)\} 
\\ && 
 =\frac{1}{4}\Big(-(-1)^{{|x|}{|z|}}as(x, y, z)+(-1)^{{|x|}{|z|}+{|y|}{|z|}}as(x, z, y)-(-1)^{{|x|}{|y|}}as(y, z, x)+(-1)^{{|x|}{|y|}+{|y|}{|z|}}as(z, y, x)
 \\ && \quad 
 +(-1)^{{|x|}{|y|}+{|x|}{|z|}}as(y, x, z)-(-1)^{{|y|}{|z|}}as(z, x, y) \Big)=0.
\end{eqnarray*}
Next, we check that $(\mathcal{A}, \bullet)$ is a commutative associative superalgebra. Indeed, for any $x, y, z\in\mathcal{A}$,
\begin{align*}
&(x\bullet y)\bullet z-x\bullet(y\bullet z)=\frac{1}{2}(x\cdot y+(-1)^{{|x|}{|y|}}y\cdot x)\bullet z
-x\bullet\frac{1}{2}(y\cdot z+(-1)^{{|y|}{|z|}}y\cdot z)\\
&=\frac{1}{4}\Big(as(x, y, z)-(-1)^{{|x|}{|y|}+{|x|}{|z|}+{|y|}{|z|}}as(z, y, x)+(-1)^{{|x|}{|y|}}(y\cdot x)\cdot z-(-1)^{{|x|}{|y|}+{|x|}{|z|}}(y\cdot z)\cdot x
\\ & \quad -(-1)^{{|x|}{|z|}+{|y|}{|z|}}as(z, x, y)
+(-1)^{{|x|}{|z|}+{|y|}{|z|}}(z\cdot x)\cdot y+(-1)^{{|y|}{|z|}}as(x, z, y)-(-1)^{{|y|}{|z|}}(x\cdot z)\cdot y\Big)=0.
\end{align*}
Finally, we check the condition :\ \ \
$\{x, y\bullet z\}=\{x, y\}\bullet z+(-1)^{{|x|}{|y|}}y\bullet\{x, z\}.$  We have
\begin{align*}
&\{x, y\bullet z\}-\{x, y\}\bullet z-(-1)^{{|x|}{|y|}}y\bullet\{x, z\}\\
&=\frac{1}{4}\Big(-as(x, y, z)-(-1)^{{|y|}{|z|}}as(x, z, y)-(-1)^{{|x|}{|y|}+{|x|}{|z|}}as(y, z, x)-(-1)^{{|x|}{|y|}+{|x|}{|z|}+{|y|}{|z|}}as(z, y, x)\\
&+(-1)^{{|x|}{|y|}}as(y, x, z)+(-1)^{{|x|}{|z|}+{|y|}{|z|}}as(z, x, y)\Big)=0.
\end{align*}
The proof is finished.
\end{proof}

\begin{df} Let $(\mathcal{A}, [\cdot ,\cdot ])$ be a Lie superalgebra and $V=V_{\bar{0}}\oplus V_{\bar{1}}$ an arbitrary vector superspace. A  representation (or module) of the Lie superalgebra  is given by a pair $(V, \rho)$, where  $\rho:\mathcal{A} \rightarrow End(V)$ is an  even linear map such that $\rho(\mathcal{A}_{i})(V_{j})\subset V_{i+j}$ where $i, j\in\mathbb{Z}_{2}$, and satisfying
\begin{eqnarray}
\rho([x, y])=\rho(x)\circ\rho(y)-(-1)^{{|x|}{|y|}}\rho(y)\circ\rho(x).
\end{eqnarray}
for all homogeneous elements $x, y \in\mathcal{A}$.
\end{df}

Let $(\mathcal{A},[\cdot,\cdot]_{\mathcal{A}})$ and $(\mathcal{A}',[\cdot,\cdot]_{\mathcal{A}'})$ be two Lie superalgebras. Set $\mathcal{A}=\mathcal{A}_{\bar{0}}\oplus\mathcal{A}_{\bar{1}}$ and $\mathcal{A}'=\mathcal{A}'_{\bar{0}}\oplus\mathcal{A}'_{\bar{1}}$. Let $\rho:\mathcal{A}\longrightarrow End(\mathcal{A}')$ and $\rho':\mathcal{A}'\longrightarrow End(\mathcal{A})$ be two even linear maps. Define a skew-supersymmetric bracket $[\cdot ,\cdot ]: \widetilde{G}\times\widetilde{G}\rightarrow \widetilde{G}$, where $\widetilde{G}$ is given by : $\widetilde{G}=\widetilde{G}_{\bar{0}}\oplus \widetilde{G}_{\bar{1}}$, with $\widetilde{G}_{\bar{0}}=\mathcal{A}_{\bar{0}}\oplus\mathcal{A}'_{\bar{0}}$ and $\widetilde{G}_{\bar{1}}=\mathcal{A}_{\bar{1}}\oplus\mathcal{A}'_{\bar{1}}$. We set  $|x+x'|=|x|=|x'|$ and $|y+y'|=|y|=|y'|$, for all homogeneous elements $x, y$  in $\mathcal{A}$ and $x', y'$ in  $\mathcal{A}'$.
On the direct sum of the underlying vector superspace $\mathcal{A}\oplus \mathcal{A}'$, the bracket,  on $\widetilde{G}$,  
 $[\cdot,\cdot]:\mathcal{A}\oplus \mathcal{A}' \times \mathcal{A}\oplus \mathcal{A}'\longrightarrow \mathcal{A}\oplus \mathcal{A}'$ is defined  by
\begin{eqnarray}
 \ \ \ \ \ \ \ \ \ \ [x+x',y+y']= [x,y]_\mathcal{A}+\rho'(x')(y)-(-1)^{{|x|}{|y|}}\rho'(y')(x)+[x',y']_{\mathcal{A}'}+\rho(x)(y')-(-1)^{{|x|}{|y|}}\rho(y)(x').
\end{eqnarray}

\begin{thm} \cite{newapproach-Bai} With the above notations, $(\mathcal{A}\oplus \mathcal{A}',[\cdot,\cdot])$ is a Lie superalgebra if and only if $\rho$ and $\rho'$ are representations of $\mathcal{A}$ and $\mathcal{A}'$ respectively and the following
conditions are satisfied
\begin{align}\label{seff}
\rho'(z')[x,y]_\mathcal{A} &= [\rho'(z')(x), y]_\mathcal{A}+(-1)^{{|x|}{|z|}}[x,\rho'(z')(y)]_\mathcal{A}\\
 &+ (-1)^{{{|x|}{|y|}}+{{|y|}{|z|}}} \rho'(\rho(y)(z'))(x)-(-1)^{{|x|}{|z|}}\rho'(\rho(x)(z'))(y),
 \nonumber\end{align}
\begin{align}\label{sff}
\rho(z)[x',y']_{\mathcal{A}'} &= [\rho(z)(x'), y']_{\mathcal{A}'}+(-1)^{{|x|}{|z|}}[x',\rho(z)(y')]_{\mathcal{A}'}\\
 &+ (-1)^{{{|x|}{|y|}}+{{|y|}{|z|}}}\rho(\rho'(y')(z))(x')-(-1)^{{|x|}{|z|}}\rho(\rho'(x')(z))(y'),
 \nonumber\end{align}
for any $x, y, z\in\mathcal{A}$ and $x',y', z'\in\mathcal{A}'$.
\end{thm}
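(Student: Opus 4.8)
The plan is to prove the ``if and only if'' by expanding the super-Jacobi identity \eqref{702} for the bracket $[\cdot,\cdot]$ on $\mathcal{A}\oplus\mathcal{A}'$ applied to three homogeneous elements, and then collecting terms according to which of the two summands they live in. Since the bracket is already skew-supersymmetric by construction, \eqref{701} holds automatically, so the entire content is in \eqref{702}. I would take $u=x+x'$, $v=y+y'$, $w=z+z'$ with $x,y,z\in\mathcal{A}$ and $x',y',z'\in\mathcal{A}'$, compute $[u,[v,w]]$ and its two cyclic permutations using the definition of $[\cdot,\cdot]$ twice, and sort the resulting expression into its $\mathcal{A}$-component and its $\mathcal{A}'$-component. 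By symmetry of the roles of $\mathcal{A}$ and $\mathcal{A}'$ it suffices to analyze one of the two components in detail; the other follows by interchanging $(\mathcal{A},\rho,\rho')\leftrightarrow(\mathcal{A}',\rho',\rho)$.

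The key organizational step is to further partition the $\mathcal{A}$-component of the super-Jacobiator by multidegree in the primed variables $x',y',z'$. The terms of primed-degree $0$ (i.e. those involving only $x,y,z$) assemble exactly into the super-Jacobiator of $[\cdot,\cdot]_{\mathcal{A}}$, hence vanish iff $(\mathcal{A},[\cdot,\cdot]_{\mathcal{A}})$ is a Lie superalgebra. The terms of primed-degree $1$ — those linear in exactly one of $x',y',z'$ and quadratic in the unprimed ones — come in three families according to which primed variable appears; using skew-supersymmetry one checks that each family is a cyclic image of the others, and a single family reorganizes precisely into the condition that $\rho'$ is a representation together with the compatibility \eqref{seff} (for the variable $z'$; the other two choices give the same identity after relabeling). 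The terms of primed-degree $2$ vanish identically because they only involve $\rho'$ evaluated on brackets inside $\mathcal{A}'$ paired against a single unprimed element, and these cancel in cyclic pairs once $\rho$ is known to be linear; similarly the primed-degree-$3$ part is the super-Jacobiator of $[\cdot,\cdot]_{\mathcal{A}'}$ sitting inside the $\mathcal{A}'$-component, not the $\mathcal{A}$-component, so it does not intervene here. Running the mirror-image bookkeeping on the $\mathcal{A}'$-component yields: $(\mathcal{A}',[\cdot,\cdot]_{\mathcal{A}'})$ Lie superalgebra, $\rho$ a representation, and \eqref{sff}.

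For the converse direction, I would simply observe that the above expansion is an identity of formal expressions: granting that $[\cdot,\cdot]_{\mathcal{A}}$ and $[\cdot,\cdot]_{\mathcal{A}'}$ satisfy the super-Jacobi identity, that $\rho,\rho'$ are representations, and that \eqref{seff}--\eqref{sff} hold, every homogeneous piece of the super-Jacobiator of $[\cdot,\cdot]$ on $\mathcal{A}\oplus\mathcal{A}'$ vanishes, so $[\cdot,\cdot]$ is a Lie superalgebra bracket. One also records the trivial checks that $\mathcal{A}$ and $\mathcal{A}'$ are subalgebras and that the grading is respected, which are immediate from the defining formula.

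The main obstacle is purely combinatorial: keeping the Koszul signs $(-1)^{|\cdot||\cdot|}$ consistent through a double application of the mixed bracket and through the three cyclic permutations, and verifying that the sign attached to each term in the degree-$1$ family matches the signs displayed in \eqref{seff}. The conceptual content is light — this is the standard matched-pair argument for Lie (super)algebras — but the sign accounting is where an error would creep in, so I would do one family of terms carefully and invoke the cyclic symmetry and the $\mathcal{A}\leftrightarrow\mathcal{A}'$ symmetry to avoid repeating essentially identical computations.
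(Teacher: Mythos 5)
The paper itself gives no proof of this theorem (it is quoted from \cite{newapproach-Bai}), so your sketch can only be judged on its own terms. The overall strategy --- expand the super-Jacobiator of $[\cdot,\cdot]$ on $u=x+x'$, $v=y+y'$, $w=z+z'$, project onto the $\mathcal{A}$- and $\mathcal{A}'$-components, and sort each component by the number of primed variables occurring --- is the standard and correct one, and the converse direction and the $\mathcal{A}\leftrightarrow\mathcal{A}'$ symmetry are handled appropriately.

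However, your bookkeeping misassigns which condition of the theorem comes from which homogeneous piece, and the misassignment is not cosmetic. In the $\mathcal{A}$-component, the terms of primed-degree $1$ (one of $x',y',z'$ together with two unprimed variables) assemble \emph{only} into condition \eqref{seff} and its relabelings; the representation condition for $\rho'$ cannot live there, since $\rho'([x',y']_{\mathcal{A}'})=\rho'(x')\rho'(y')-(-1)^{|x'||y'|}\rho'(y')\rho'(x')$ is an identity quadratic in the primed variables. That condition is exactly the primed-degree-$2$ part of the $\mathcal{A}$-component: for a fixed unprimed variable $z$ the three surviving terms are $\rho'([x',y']_{\mathcal{A}'})(z)$, $\rho'(x')\rho'(y')(z)$ and $\rho'(y')\rho'(x')(z)$ with Koszul signs, and they cancel \emph{if and only if} $\rho'$ is a representation of $\mathcal{A}'$ --- not ``identically'' and not for any reason involving linearity of $\rho$, as you claim. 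Since the statement is an equivalence, asserting that a nonvanishing homogeneous piece vanishes identically (and compensating by stuffing the representation condition into the wrong degree) means the necessity direction of your argument does not actually close: you would be unable to derive the representation property of $\rho'$ from the family of terms you point to. The fix is straightforward --- degree $0$ gives the Jacobi identity for $[\cdot,\cdot]_{\mathcal{A}}$ (automatic by hypothesis), degree $1$ gives \eqref{seff}, degree $2$ gives that $\rho'$ is a representation, and the mirror statement holds for the $\mathcal{A}'$-component --- but as written the decomposition is wrong.
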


This Lie superalgebra  is denoted by $\mathcal{A} \bowtie^{\rho'}_{\rho}\mathcal{A}'$ or simply $\mathcal{A}\bowtie\mathcal{A}'$. Moreover, every Lie superalgebra which is the direct sum of the underlying vector superspaces of two sub-superalgebra can be obtained from a matched pair of Lie superalgebras.

\begin{df}
Let $(\mathcal{A},\bullet)$ be a commutative associative superalgebra. Recall that a representation (or module) on a vector superspace $V=V_{\bar{0}}\oplus V_{\bar{1}}$ with respect to $\mathcal{A}$ is an   even linear map $\varphi:\mathcal{A}\longrightarrow End(V)$, such that for any $x, y\in\mathcal{A}$, the following equality is satisfied:
\begin{eqnarray}
\varphi(x\bullet y)=\varphi(x)\circ\varphi(y).
\end{eqnarray}
\end{df}
We denote the representation by $(V, \varphi)$.

\

Let $(\mathcal{A}_1,\bullet_1)$ and $(\mathcal{A}_2,\bullet_2)$ be two commutative associative superalgebras. Set $\mathcal{A}_1={\mathcal{A}_1}_{\bar{0}}\oplus{\mathcal{A}_1}_{\bar{1}}$ and $\mathcal{A}_2={\mathcal{A}_2}_{\bar{0}}\oplus{\mathcal{A}_2}_{\bar{1}}$. If there are  linear maps $\varphi_1:\mathcal{A}_1\longrightarrow End(\mathcal{A}_2)$ and $\varphi_2:\mathcal{A}_2\longrightarrow End(\mathcal{A}_1)$ which are representations of $\mathcal{A}_1$ and $\mathcal{A}_2$. Define a bilinear map $\bullet: \widetilde{G}\times\widetilde{G}\rightarrow \widetilde{G}$, where $\widetilde{G}$ is given by  $\widetilde{G}=\widetilde{G}_{\bar{0}}\oplus \widetilde{G}_{\bar{1}}$, with $\widetilde{G}_{\bar{0}}={\mathcal{A}_1}_{\bar{0}}\oplus{\mathcal{A}_2}_{\bar{0}}$ and $\widetilde{G}_{\bar{1}}={\mathcal{A}_1}_{\bar{1}}\oplus{\mathcal{A}_2}_{\bar{1}}$. We set  $|x+a|=|x|=|a|$ and $|y+b|=|y|=|b|$, for all $x, y$ in $\mathcal{A}_1$ and $a, b$ in  $\mathcal{A}_2$.
On the direct sum of the underlying vector superspace $\mathcal{A}_1\oplus \mathcal{A}_2$, define a bilinear map $\bullet$ by
\begin{eqnarray}
(x+a)\bullet (y+b)&=& x\bullet_1 y+\varphi_2(a)y+(-1)^{{|x|}{|y|}}\varphi_2(b)x+ a\bullet_2 b+\varphi_1(x)b+(-1)^{{|x|}{|y|}}\varphi_1(y)a.
\end{eqnarray}

\begin{thm} With the above notations, $(\mathcal{A}_1\oplus \mathcal{A}_2,\bullet)$ is a commutative associative superalgebra if and only if $\varphi_1$ and $\varphi_2$ are representations of $\mathcal{A}_1$ and $\mathcal{A}_2$ respectively and the following conditions are satisfied
\begin{eqnarray}\label{01}
\varphi_1(x)(a \bullet_2 b) &=& (\varphi_1(x)a)\bullet_2 b+ (-1)^{{|x|}{|a|}}\varphi_1(\varphi_2(a)x)b,
\end{eqnarray}
\begin{eqnarray}\label{02}
\varphi_2(a)(x \bullet_1 y) &=& (\varphi_2(a)x)\bullet_1 y+ (-1)^{{|x|}{|a|}}\varphi_2(\varphi_1(x)a) y,
\end{eqnarray}
for any $x,y\in\mathcal{A}_1$ and $a,b\in\mathcal{A}_2$.
\end{thm}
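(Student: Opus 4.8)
The plan is to expand the associativity condition for $(\mathcal{A}_1\oplus\mathcal{A}_2,\bullet)$ on arbitrary homogeneous elements and collect terms according to which component of $\mathcal{A}_1\oplus\mathcal{A}_2$ they land in and which of the data $\bullet_1,\bullet_2,\varphi_1,\varphi_2$ they involve. Concretely, I would take homogeneous $u=x+a$, $v=y+b$, $w=z+c$ with $x,y,z\in\mathcal{A}_1$ and $a,b,c\in\mathcal{A}_2$, and compute both $(u\bullet v)\bullet w$ and $u\bullet(v\bullet w)$ using the defining formula twice, being careful with the Koszul signs (the parity of $x+a$ is $|x|=|a|$, etc.). Since $\bullet$ is manifestly supercommutative whenever $\bullet_1$, $\bullet_2$ are and $\varphi_1,\varphi_2$ are even — this should be remarked first, as it is immediate — the only nontrivial axiom is associativity, i.e.\ $as(u,v,w)=0$.

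The key observation that organizes the computation is that $as(u,v,w)$ decomposes as a sum of an $\mathcal{A}_1$-valued part and an $\mathcal{A}_2$-valued part, and within each part the terms separate further by the ``number of $\mathcal{A}_2$-slots'' among $u,v,w$ (equivalently, by multidegree in the grading where $\mathcal{A}_1$ has degree $0$ and $\mathcal{A}_2$ has degree $1$ as a module grading, not the $\mathbb{Z}_2$-grading). The purely $\mathcal{A}_1$ terms (all of $x,y,z$) vanish exactly because $\bullet_1$ is associative; the purely $\mathcal{A}_2$ terms vanish because $\bullet_2$ is associative; the terms with exactly one $\mathcal{A}_2$-entry among the three arguments land in $\mathcal{A}_2$ and vanish precisely by the hypothesis that $\varphi_1$ is a representation (together with supercommutativity of $\bullet_1$ to match the three cyclic positions); symmetrically the terms with exactly two $\mathcal{A}_2$-entries land in $\mathcal{A}_1$ and vanish by $\varphi_2$ being a representation. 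The two remaining ``mixed'' sectors — one $\mathcal{A}_2$-entry and one $\mathcal{A}_1$-entry still present in a way that produces $\varphi_1$-then-$\varphi_2$ compositions — are exactly what forces conditions \eqref{01} and \eqref{02}: collecting the coefficient of, say, the sector with arguments $(x,a,b)$ in all its positions and using supercommutativity to align them gives \eqref{01}, and symmetrically the sector $(a,x,y)$ gives \eqref{02}.

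For the converse, I would simply reverse this bookkeeping: assuming $\varphi_1,\varphi_2$ are representations and \eqref{01}, \eqref{02} hold, each of the six sectors of $as(u,v,w)$ is seen to vanish on its own, so $as\equiv 0$ and $(\mathcal{A}_1\oplus\mathcal{A}_2,\bullet)$ is associative; combined with the already-noted supercommutativity, it is a commutative associative superalgebra. I expect the main obstacle to be purely clerical rather than conceptual: keeping the Koszul signs consistent when the same module-theoretic term appears in three cyclically-shifted argument positions, since each shift introduces a factor like $(-1)^{|x|(|y|+|a|)}$ and one must verify these are precisely the signs appearing in \eqref{01} and \eqref{02}. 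A clean way to control this is to first prove supercommutativity of $\bullet$ as a standalone lemma and then only ever compute $(u\bullet v)\bullet w$, obtaining $u\bullet(v\bullet w)$ from it by $w\bullet(u\bullet v)$ via supercommutativity, which halves the number of sign computations and makes the cancellation in each sector transparent.
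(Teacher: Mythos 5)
Your proposal is correct and takes essentially the same route as the paper's own proof, which simply expands $\textbf{(}(x+a)\bullet(y+b)\textbf{)}\bullet(z+c)$ and $(x+a)\bullet\textbf{(}(y+b)\bullet(z+c)\textbf{)}$ in full and reads off (\ref{01}), (\ref{02}) and the two representation identities from the collected terms. One minor bookkeeping caveat: a triple with exactly one $\mathcal{A}_2$-entry contributes to \emph{both} summands of $\mathcal{A}_1\oplus\mathcal{A}_2$ (its $\mathcal{A}_2$-component forces $\varphi_1$ to be a representation while its $\mathcal{A}_1$-component forces (\ref{02}), and dually for two $\mathcal{A}_2$-entries and (\ref{01})), so the sectors do not separate quite as cleanly as you describe, but since your plan is to expand everything and collect anyway, this does not affect the validity of the argument.
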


\begin{proof} If  $(\mathcal{A},\bullet)$ is a commutative associative superalgebra, with respect to $\bullet$ we have
\begin{eqnarray}\label{198a}
\textbf{(}x+a\textbf{)}\bullet\textbf{(}(y+b)\bullet(z+c)\textbf{)}=\textbf{(}(x+a)\bullet(y+b)\textbf{)}\bullet \textbf{(}z+c\textbf{)},
\end{eqnarray}
for $x, y, z\in\mathcal{A}_1$ and $a, b, c\in\mathcal{A}_2$. Expanding (\ref{198a}), we obtain
\begin{align*}
\ast) \ &\textbf{(}x+a\textbf{)}\bullet\textbf{(}(y+b)\bullet(z+c)\textbf{)}=
x\bullet_1(y\bullet_1z)+x\bullet_1(\varphi_2(b)z)+(-1)^{{|y|}{|z|}}x\bullet_1(\varphi_2(c)y)+\varphi_2(a)(y\bullet_1z)\\
&+\varphi_2(a)(\varphi_2(b)z)+(-1)^{{|y|}{|z|}}\varphi_2(a)(\varphi_2(c)y)
+(-1)^{{{|x|}{|y|}}+{{|x|}{|z|}}}\varphi_2(b\bullet_2c)x+(-1)^{{{|x|}{|y|}}+{{|x|}{|z|}}}\varphi_2(\varphi_1(y)c)x\\
&+(-1)^{{{|x|}{|y|}}+{{|x|}{|z|}}+{{|y|}{|z|}}}\varphi_2(\varphi_1(z)b)x+a\bullet_2(b\bullet_2c)+a\bullet_2(\varphi_1(y)c)+(-1)^{{|y|}{|z|}}a\bullet_2(\varphi_1(z)b)
+\varphi_1(x)(b\bullet_2c)\\
&+\varphi_1(x)(\varphi_1(y)c)+(-1)^{{|y|}{|z|}}\varphi_1(x)(\varphi_1(z)b)
+(-1)^{{{|x|}{|y|}}+{{|x|}{|z|}}}\varphi_1(y\bullet_1z)a+(-1)^{{{|x|}{|y|}}+{{|x|}{|z|}}}\varphi_1(\varphi_2(b)z)a\\
&+(-1)^{{{|x|}{|y|}}+{{|x|}{|z|}}+{{|y|}{|z|}}}\varphi_1(\varphi_2(c)y)a.
\end{align*}
\begin{align*}
\ast) \ &\textbf{(}(x+a)\bullet(y+b)\textbf{)}\bullet \textbf{(}z+c\textbf{)}=
(x\bullet_1y)\bullet_1z+\varphi_2(a)y\bullet_1z+(-1)^{{|x|}{|y|}}\varphi_2(b)x\bullet_1z
+\varphi_2(a\bullet_2b)z+\varphi_2(\varphi_1(x)b)z\\
&+(-1)^{{|x|}{|y|}}\varphi_2(\varphi_1(y)a)z
+(-1)^{{{|z|}{|x|}}+{{|z|}{|y|}}}\varphi_2(c)(x\bullet_1y)
+(-1)^{{{|z|}{|x|}}+{{|z|}{|y|}}}\varphi_2(c)\varphi_2(a)y
+(-1)^{{{|z|}{|x|}}+{{|z|}{|y|}}}\varphi_1(z)\varphi_1(x)b\\
&+(a\bullet_2b)\bullet_2c+\varphi_1(x)b\bullet_2c+(-1)^{{|x|}{|y|}}\varphi_1(y)a\bullet_2c
+\varphi_1(x\bullet_1y)c+\varphi_1(\varphi_2(a)y)c
+(-1)^{{|x|}{|y|}}\varphi_1(\varphi_2(b)x)c\\
&+(-1)^{{{|z|}{|x|}}+{{|z|}{|y|}}}\varphi_1(z)(a\bullet_2b)
+(-1)^{{{|z|}{|x|}}+{{|z|}{|y|}}+{{|x|}{|y|}}}\varphi_1(z)\varphi_1(y)a
+(-1)^{{{|z|}{|x|}}+{{|z|}{|y|}}+{{|x|}{|y|}}}\varphi_2(c)\varphi_2(b)x.\\
\end{align*}

It implies (\ref{01}), (\ref{02}) and
\begin{eqnarray}\label{rep 1}
\varphi_1(x\bullet_1y)c=\varphi_1(x)\varphi_1(y)c,
\end{eqnarray}
\begin{eqnarray}\label{rep 2}
\varphi_2(a\bullet_2b)z=\varphi_2(a)\varphi_2(b)z.
\end{eqnarray}
By (\ref{rep 1}), we deduce that $\varphi_1$ is a representation of $(\mathcal{A}_1,\bullet_1)$ on $\mathcal{A}_2$. By (\ref{rep 2}) we deduce that $\varphi_2$ is a representation of $(\mathcal{A}_2,\bullet_2)$ on $\mathcal{A}_1$. This finishes the proof.
\end{proof}

We use the notation  $\mathcal{A}_1 \bowtie^{\varphi_2}_{\varphi_1}\mathcal{A}_2$ or simply $\mathcal{A}_1\bowtie\mathcal{A}_2$. Moreover, every commutative associative superalgebra which is the direct sum of the underlying vector superspaces of two sub-superalgebra can be obtained from a matched pair of commutative associative superalgebras.

\begin{df}
Let $(\mathcal{P},\{,\},\bullet)$ be a Poisson superalgebra, $V=V_{\bar{0}}\oplus V_{\bar{1}}$ an arbitrary vector superspace and $ \psi_{\{,\}}, \psi_{\bullet}:\mathcal{P}\longrightarrow End(V)$ be two even linear maps. Then $(V, \psi_{\{,\}}, \psi_{\bullet})$ is called a representation (or module) of $\mathcal{P}$ if $(V,\psi_{\{,\}})$ is a representation of $(\mathcal{P},\{,\})$ and $(V, \psi_{\bullet})$ is a representation of $(\mathcal{P},\bullet)$ and they are compatible in the sense that for any $x,y\in \mathcal{P}$ and $v\in V$
\begin{eqnarray}\label{super rep 1}
\psi_{\{,\}}(x\bullet y)(v)= \psi_{\bullet}(x)\psi_{\{,\}}(y)(v)+(-1)^{{|x|}{|y|}}\psi_{\bullet}(y)\psi_{\{,\}}(x)(v)
\end{eqnarray}
\begin{eqnarray}\label{super rep 2}
\psi_{\bullet}(\{x,y\})(v) = \psi_{\{,\}}(x)\psi_{\bullet}(y)(v)-(-1)^{{|x|}{|y|}}\psi_{\bullet}(y)\psi_{\{,\}}(x)(v).
\end{eqnarray}
\end{df}

In the case of Poisson superalgebras, we can construct semidirect product when given bimodules. Analogously, we have

\begin{prop}\label{S()YYY} Let $(\mathcal{P},\{,\},\bullet)$ be a Poisson superalgebra. Then $(V, \psi_{\{,\}}, \psi_{\bullet})$ is a representation of a Poisson superalgebra $(\mathcal{P},\{,\},\bullet)$ if and only if the direct sum of vector
superspaces $\mathcal{P}\oplus V$ is turned into a Poisson superalgebra by defining the operations by (we still denote the operations by $\{,\}$ and $\bullet$):
\begin{eqnarray}
\{x_{1}+v_{1}, x_{2}+v_{2}\}= \{x_{1}, x_{2}\}+\psi_{\{,\}}(x_{1})v_{2}-(-1)^{{|x_{1}|}{|x_{2}|}}\psi_{\{,\}}(x_{2})v_{1},
\end{eqnarray}
\begin{eqnarray}
(x_{1}+v_{1})\bullet(x_{2}+v_{2})=x_{1}\bullet x_{2}+\psi_{\bullet}(x_{1})v_{2}+(-1)^{{|x_{1}|}{|x_{2}|}}\psi_{\bullet}(x_{2})v_{1},
\end{eqnarray}
for any $x_{1}, x_{2}\in\mathcal{P}$, $v_{1}, v_{2}\in V$. We denote it by $\mathcal{P}\ltimes_{\psi_{\{,\}},\psi_{\bullet}}V$ or simply $\mathcal{P}\ltimes V$  \cite{R. Schafer}.
\end{prop}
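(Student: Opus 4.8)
The plan is to verify directly that the bracket $\{\cdot,\cdot\}$ and the product $\bullet$ defined on $\mathcal{P}\oplus V$ satisfy the three axioms of a Poisson superalgebra, and to observe that each axiom decomposes into a component lying in $\mathcal{P}$ (which reduces to the corresponding axiom for $\mathcal{P}$) and a component lying in $V$ (which reduces to one of the defining conditions of a representation). First I would invoke the existing structure theory: since the formula for $\{\cdot,\cdot\}$ on $\mathcal{P}\oplus V$ is exactly the semidirect-product bracket of Lie superalgebras with $\rho'=0$ (i.e. $V$ acting trivially back on $\mathcal{P}$), the sub-superalgebra conditions from the matched-pair theorem collapse, and $(\mathcal{P}\oplus V,\{\cdot,\cdot\})$ is a Lie superalgebra if and only if $(V,\psi_{\{,\}})$ is a representation of $(\mathcal{P},\{\cdot,\cdot\})$; the analogous statement for $\bullet$ and commutative associative superalgebras follows the same way from the matched-pair result for commutative associative superalgebras. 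So axioms (1) and (2) of a Poisson superalgebra on $\mathcal{P}\oplus V$ are equivalent to $(V,\psi_{\{,\}})$ being a Lie-module and $(V,\psi_{\bullet})$ being an associative-module.

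The remaining work is axiom (3), the compatibility identity
\[
\{x_1+v_1,(x_2+v_2)\bullet(x_3+v_3)\}=\{x_1+v_1,x_2+v_2\}\bullet(x_3+v_3)+(-1)^{|x_1||x_2|}(x_2+v_2)\bullet\{x_1+v_1,x_3+v_3\}.
\]
I would expand both sides using the two defining formulas, then collect terms by where they land. The purely $\mathcal{P}$-valued part is precisely identity (\ref{awww}) for $\mathcal{P}$ and so holds. The $V$-valued part splits by which of $v_1,v_2,v_3$ survives. Terms involving only $v_1$ (with $x_2,x_3\in\mathcal{P}$) give, after cancelling signs, the identity $\psi_{\{,\}}(x_2\bullet x_3)v_1 = \psi_{\bullet}(x_2)\psi_{\{,\}}(x_3)v_1 + (-1)^{|x_2||x_3|}\psi_{\bullet}(x_3)\psi_{\{,\}}(x_2)v_1$, which is (\ref{super rep 1}) (up to relabelling and the symmetry of $\bullet$). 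Terms involving only $v_2$ give $\psi_{\{,\}}(x_1)\psi_{\bullet}(x_3)v_2$ compared against $\psi_{\bullet}(\{x_1,x_3\})v_2$ and $\psi_{\bullet}(x_3)\psi_{\{,\}}(x_1)v_2$, which is exactly (\ref{super rep 2}); terms involving only $v_3$ likewise reduce to (\ref{super rep 2}) with the roles of the second and third slots exchanged, using commutativity of $\bullet$. The converse direction is immediate: restricting the Poisson axioms on $\mathcal{P}\oplus V$ to elements of the form $x+0$ and $0+v$ recovers each of the module axioms for $(V,\psi_{\{,\}},\psi_{\bullet})$.

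The main obstacle is purely bookkeeping: keeping the Koszul signs straight when one expands the triple products, since each of the three inputs carries a parity and the twist map $\tau$ inserts a sign whenever two arguments are transposed. The safe route is to compute with $v_1,v_2,v_3$ homogeneous and to treat the three ``sectors'' (only $v_1$ nonzero, only $v_2$, only $v_3$) separately so that at most one parity from $V$ enters at a time; the sign $(-1)^{|x_1||x_2|}$ in front of the second term on the right-hand side then matches the sign produced when the compatibility relation (\ref{super rep 1}) or (\ref{super rep 2}) is written with its arguments in the order dictated by the expansion. I do not expect any conceptual difficulty beyond this; the whole proof is a termwise comparison, and I would simply state that a direct computation expanding both operations verifies the claimed equivalence, referring to the earlier matched-pair propositions for the Lie and associative parts.
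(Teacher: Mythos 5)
Your proposal is correct and follows essentially the same route as the paper: a direct termwise verification of the three Poisson superalgebra axioms on $\mathcal{P}\oplus V$, with the Leibniz compatibility splitting, sector by sector in $v_1,v_2,v_3$, into exactly the conditions (\ref{super rep 1}) and (\ref{super rep 2}). The only organizational difference is that you delegate the Lie and commutative-associative parts to the matched-pair theorems with trivial back-action, where the paper re-verifies skew-supersymmetry, the super-Jacobi identity, commutativity and associativity by hand; both are sound.
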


\begin{proof} The sufficient condition holds obviously. Here we just verify the necessary condition. For any $x_{1}, x_{2}, x_{3}\in\mathcal{P}$, $v_{1}, v_{2}, v_{3}\in V$. From the construction above we have $|x_{1}+v_{1}|=|x_{1}|=|v_{1}|$ and $|x_{2}+v_{2}|=|x_{2}|=|v_{2}|$,
\begin{align*}
\{x_{1}+v_{1}, x_{2}+v_{2}\}&=\{x_{1}, x_{2}\}+\psi_{\{,\}}(x_{1})v_{2}-(-1)^{{|x_{1}|}{|x_{2}|}}\psi_{\{,\}}(x_{2})v_{1}\\
&=-(-1)^{{|x_{1}|}{|x_{2}|}}\textbf{(}\{x_{2},x_{1}\}+\psi_{\{,\}}(x_{2})v_{1}-(-1)^{{|x_{1}|}{|x_{2}|}}\psi_{\{,\}}(x_{1})v_{2}\textbf{)}\\
&=-(-1)^{{|x_{1}|}{|x_{2}|}}\{x_{2}+v_{2}, x_{1}+v_{1}\}\\
&=-(-1)^{{|x_{1}+v_{1}|}{|x_{2}+v_{2}|}}\{x_{2}+v_{2}, x_{1}+v_{1}\},
\end{align*}
so the skew-supersymmetry holds. For the superJacobi identity, we have
\begin{align*}
&(-1)^{{|(x_{1}+v_{1})|}{|(x_{3}+v_{3})|}}\{x_{1}+v_{1}, \{x_{2}+v_{2}, x_{3}+v_{3}\}\}+(-1)^{{|(x_{2}+v_{2})|}{|(x_{1}+v_{1})|}}\{x_{2}+v_{2}, \{x_{3}+v_{3}, x_{1}+v_{1}\}\}\\
&+(-1)^{{|(x_{2}+v_{2})|}{|(x_{3}+v_{3})|}}\{x_{3}+v_{3}, \{x_{1}+v_{1}, x_{2}+v_{2}\}\}\\
&=(-1)^{{|x_{1}|}{|x_{3}|}}\psi_{\{,\}}(x_{1})\psi_{\{,\}}(x_{2})v_{3}
-(-1)^{{{|x_{1}|}{|x_{3}|}}+{{|x_{2}|}{|x_{3}|}}}\psi_{\{,\}}(x_{1})\psi_{\{,\}}(x_{3})v_{2}
-(-1)^{{|x_{1}|}{|x_{2}|}}\psi_{\{,\}}(x_{2})\psi_{\{,\}}(x_{3})v_{1}\\
&+(-1)^{{{|x_{1}|}{|x_{2}|}}+{{|x_{2}|}{|x_{3}|}}}\psi_{\{,\}}(x_{3})\psi_{\{,\}}(x_{2})v_{1}
+(-1)^{{|x_{1}|}{|x_{2}|}}\psi_{\{,\}}(x_{2})\psi_{\{,\}}(x_{3})v_{1}
-(-1)^{{{|x_{1}|}{|x_{2}|}}+{{|x_{1}|}{|x_{3}|}}}\psi_{\{,\}}(x_{2})\psi_{\{,\}}(x_{1})v_{3}\\
&-(-1)^{{|x_{2}|}{|x_{3}|}}\psi_{\{,\}}(x_{3})\psi_{\{,\}}(x_{1})v_{2}
+(-1)^{{{|x_{2}|}{|x_{3}|}}+{{|x_{1}|}{|x_{3}|}}}\psi_{\{,\}}(x_{1})\psi_{\{,\}}(x_{3})v_{2}
+(-1)^{{|x_{2}|}{|x_{3}|}}\psi_{\{,\}}(x_{3})\psi_{\{,\}}(x_{1})v_{2}\\
&-(-1)^{{{|x_{2}|}{|x_{3}|}}+{{|x_{1}|}{|x_{2}|}}}\psi_{\{,\}}(x_{3})\psi_{\{,\}}(x_{2})v_{1}
-(-1)^{{|x_{1}|}{|x_{3}|}}\psi_{\{,\}}(x_{1})\psi_{\{,\}}(x_{2})v_{3}
+(-1)^{{{|x_{1}|}{|x_{3}|}}+{{|x_{1}|}{|x_{2}|}}}\psi_{\{,\}}(x_{2})\psi_{\{,\}}(x_{1})v_{3}\\
&=0.
\end{align*}
So $(\mathcal{P}\ltimes V, \{,\})$ is a Lie superalgebra. Next we will check that $(\mathcal{P}\ltimes V, \bullet)$ is a commutative associative superalgebra. In fact, for any $x_{1}, x_{2}, x_{3}\in\mathcal{P}$, $v_{1}, v_{2}, v_{3}\in V$, also one may check directly that :
$$(x_{1}+v_{1})\bullet(x_{2}+v_{2})
=(-1)^{{|x_{1}|}{|x_{2}|}}(x_{2}+v_{2})\bullet(x_{1}+v_{1})
=(-1)^{{(|x_{1}+v_{1}|)}{(|x_{2}+v_{2}|)}}(x_{2}+v_{2})\bullet(x_{1}+v_{1}).$$
Now we check the following equality
$$\textbf{(}x_{1}+v_{1}\textbf{)}\bullet\textbf{(}(x_{2}+v_{2})\bullet(x_{3}+v_{3})\textbf{)}
=\textbf{(}(x_{1}+v_{1})\bullet(x_{2}+v_{2})\textbf{)}\bullet \textbf{(}x_{3}+v_{3}\textbf{)}.$$
For this, we calculate
\begin{align*}
&\textbf{(}x_{1}+v_{1}\textbf{)}\bullet\textbf{(}(x_{2}+v_{2})\bullet(x_{3}+v_{3})\textbf{)}
=\textbf{(}x_{1}+v_{1}\textbf{)}\bullet\textbf{(}x_{2}\bullet x_{3}+\psi_{\bullet}(x_{2})v_{3}+(-1)^{{|x_{2}|}{|x_{3}|}}\psi_{\bullet}(x_{3})v_{2}\textbf{)}\\
&=x_{1}\bullet(x_{2}\bullet x_{3})+\psi_{\bullet}(x_{1})\psi_{\bullet}(x_{2})v_{3}+(-1)^{{|x_{2}|}{|x_{3}|}}\psi_{\bullet}(x_{1})\psi_{\bullet}(x_{3})v_{2}
+(-1)^{{{|x_{1}|}{|x_{2}|}}+{{|x_{1}|}{|x_{3}|}}}\psi_{\bullet}(x_{2}\bullet x_{3})v_{1}\\
&=(x_{1}\bullet x_{2})\bullet x_{3}+\psi_{\bullet}(x_{1}\bullet x_{2})v_{3}+(-1)^{{{|x_{2}|}{|x_{3}|}}+{{|x_{1}|}{|x_{3}|}}}\psi_{\bullet}(x_{3})\psi_{\bullet}(x_{1})v_{2}
+(-1)^{{{|x_{1}|}{|x_{2}|}}+{{|x_{1}|}{|x_{3}|}}+{{|x_{2}|}{|x_{3}|}}}\psi_{\bullet}(x_{3})\psi_{\bullet}(x_{2})v_{1}\\
&=\textbf{(}(x_{1}+v_{1})\bullet(x_{2}+v_{2})\textbf{)}\bullet \textbf{(}x_{3}+v_{3}\textbf{)}.
\end{align*}
Finally, we check the condition
$$\{\textbf{(}x_{1}+v_{1}\textbf{)},\textbf{(}(x_{2}+v_{2})\bullet(x_{3}+v_{3})\textbf{)}\}
=\{x_{1}+v_{1}, x_{2}+v_{2}\}\bullet \textbf{(}x_{3}+v_{3}\textbf{)}
+(-1)^{{|(x_{1}+v_{1})|}{|(x_{2}+v_{2})|}}\textbf{(}x_{2}+v_{2}\textbf{)}\bullet \{x_{1}+v_{1}, x_{3}+v_{3}\}.$$
In fact, on the one hand, we have
\begin{align*}
&\{\textbf{(}x_{1}+v_{1}\textbf{)},\textbf{(}(x_{2}+v_{2})\bullet(x_{3}+v_{3})\textbf{)}\}
=\{\textbf{(}x_{1}+v_{1}\textbf{)}, x_{2}\bullet x_{3}+\psi_{\bullet}(x_{2})v_{3}+(-1)^{{|x_{2}|}{|x_{3}|}}\psi_{\bullet}(x_{3})v_{2}\}\\
&=\{x_{1}, x_{2}\}\bullet x_{3}+\psi_{\{,\}}(x_{1})\psi_{\bullet}(x_{2})v_{3}
+(-1)^{{|x_{2}|}{|x_{3}|}}\psi_{\{,\}}(x_{1})\psi_{\bullet}(x_{3})v_{2}
-(-1)^{{{|x_{1}|}{|x_{2}|}}+{{|x_{1}|}{|x_{3}|}}}\psi_{\bullet}(x_{2})\psi_{\{,\}}(x_{3})v_{1}\\
&-(-1)^{{{|x_{1}|}{|x_{2}|}}+{{|x_{1}|}{|x_{3}|}}+{{|x_{2}|}{|x_{3}|}}}\psi_{\bullet}(x_{3})\psi_{\{,\}}(x_{2})v_{1}.
\end{align*}
On the other hand, we have
\begin{align*}
&\{x_{1}+v_{1}, x_{2}+v_{2}\}\bullet \textbf{(}x_{3}+v_{3}\textbf{)}
+(-1)^{{|(x_{1}+v_{1})|}{|(x_{2}+v_{2})|}}\textbf{(}x_{2}+v_{2}\textbf{)}\bullet \{x_{1}+v_{1}, x_{3}+v_{3}\}\\
&=\textbf{(}\{x_{1}, x_{2}\}+\psi_{\{,\}}(x_{1})v_{2}-(-1)^{{|x_{1}|}{|x_{2}|}}\psi_{\{,\}}(x_{2})v_{1}\textbf{)}\bullet \textbf{(}x_{3}+v_{3}\textbf{)}\\
&+(-1)^{{|x_{1}|}{|x_{2}|}}\textbf{(}x_{2}+v_{2}\textbf{)}\bullet \textbf{(}\{x_{1}, x_{3}\}+\psi_{\{,\}}(x_{1})v_{3}-(-1)^{{|x_{1}|}{|x_{3}|}}\psi_{\{,\}}(x_{3})v_{1}\textbf{)}\\
&=\{x_{1}, x_{2}\}\bullet x_{3}+\psi_{\{,\}}(x_{1})\psi_{\bullet}(x_{2})v_{3}
-(-1)^{{|x_{1}|}{|x_{2}|}}\psi_{\bullet}(x_{2})\psi_{\{,\}}(x_{1})v_{3}
+(-1)^{{{|x_{1}|}{|x_{3}|}}+{{|x_{2}|}{|x_{3}|}}}\psi_{\bullet}(x_{3})\psi_{\{,\}}(x_{1})v_{2}\\
&-(-1)^{{{|x_{1}|}{|x_{3}|}}+{{|x_{2}|}{|x_{3}|}}+{{|x_{1}|}{|x_{2}|}}}\psi_{\bullet}(x_{3})\psi_{\{,\}}(x_{2})v_{1}
+(-1)^{{|x_{1}|}{|x_{2}|}}x_{2}\bullet\{x_{1}, x_{3}\}
+(-1)^{{|x_{1}|}{|x_{2}|}}\psi_{\bullet}(x_{2})\psi_{\{,\}}(x_{1})v_{3}\\
&-(-1)^{{{|x_{1}|}{|x_{2}|}}+{{|x_{1}|}{|x_{3}|}}}\psi_{\bullet}(x_{2})\psi_{\{,\}}(x_{3})v_{1}
+(-1)^{{|x_{2}|}{|x_{3}|}}\psi_{\{,\}}(x_{1})\psi_{\bullet}(x_{3})v_{2}
-(-1)^{{{|x_{2}|}{|x_{3}|}}+{{|x_{1}|}{|x_{3}|}}}\psi_{\bullet}(x_{3})\psi_{\{,\}}(x_{1})v_{2}\\
&=\{x_{1}, x_{2}\}\bullet x_{3}+\psi_{\{,\}}(x_{1})\psi_{\bullet}(x_{2})v_{3}
+(-1)^{{|x_{2}|}{|x_{3}|}}\psi_{\{,\}}(x_{1})\psi_{\bullet}(x_{3})v_{2}
-(-1)^{{{|x_{1}|}{|x_{2}|}}+{{|x_{1}|}{|x_{3}|}}}\psi_{\bullet}(x_{2})\psi_{\{,\}}(x_{3})v_{1}\\
&-(-1)^{{{|x_{1}|}{|x_{2}|}}+{{|x_{1}|}{|x_{3}|}}+{{|x_{2}|}{|x_{3}|}}}\psi_{\bullet}(x_{3})\psi_{\{,\}}(x_{2})v_{1}.
\end{align*}
Thus $(\mathcal{P}\ltimes V, \{,\}, \bullet)$ is a Poisson superalgebra. The proof is completed.
\end{proof}

\begin{thm} Let $(\mathcal{P}, \{,\}, \bullet)$ be a Poisson superalgebra and $(V, \psi_{\{,\}}, \psi_{\bullet})$ be  a representation of a Poisson superalgebra $(\mathcal{P}, \{,\}, \bullet)$. Define $\psi^{\ast}_{\{,\}}, \psi^{\ast}_{\bullet}:\mathcal{P} \longrightarrow End(V^{\ast})$ by, for any x $\in\mathcal{P}$, $\xi\in V^{\ast}$, $v\in V$
\begin{eqnarray}
\langle \psi^{\ast}_{\{,\}}(x)\xi,  v\rangle=-(-1)^{{|x|}{|\xi|}}\langle \xi,  \psi_{\{,\}}(x)v\rangle, \ \ \ \ \ \ \langle \psi^{\ast}_{\bullet}(x)\xi,  v\rangle=-(-1)^{{|x|}{|\xi|}}\langle \xi,  \psi_{\bullet}(x)v\rangle.
\end{eqnarray}
If, in addition, for any $x, y\in\mathcal{P}$, $\xi\in V^{\ast}$, $v\in V$
\begin{eqnarray}
\psi_{\{,\}}(x\bullet y)=\psi_{\{,\}}(x)\psi_{\bullet}(y)
+(-1)^{{|x|}{|y|}}\psi_{\{,\}}(y)\psi_{\bullet}(x),
\end{eqnarray}
\begin{eqnarray}
\psi_{\bullet}(\{x, y\})=\psi_{\{,\}}(x)\psi_{\bullet}(y)
-(-1)^{{|x|}{|y|}}\psi_{\bullet}(y)\psi_{\{,\}}(x),
\end{eqnarray}
then $(V^{\ast},\psi^{\ast}_{\{,\}}, -\psi^{\ast}_{\bullet})$ is a representation of Poisson superalgebra $(\mathcal{P}, \{,\}, \bullet)$. Moreover, $(\mathcal{P}\ltimes V^{\ast}, \{,\}, \bullet)$ is also a Poisson superalgebra.
\end{thm}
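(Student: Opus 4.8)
The plan is to verify the three defining properties of a representation of a Poisson superalgebra for the triple $(V^{\ast},\psi^{\ast}_{\{,\}},-\psi^{\ast}_{\bullet})$, and then to invoke Proposition \ref{S()YYY} for the last assertion. Throughout, the only bookkeeping device needed is that $\psi^{\ast}_{\{,\}}(x)$ and $\psi^{\ast}_{\bullet}(x)$ have parity $|x|$, so that $|\psi^{\ast}_{\{,\}}(y)\xi|=|y|+|\xi|$, etc., and one should repeatedly use the defining identities $\langle\psi^{\ast}_{\{,\}}(x)\eta,v\rangle=-(-1)^{|x||\eta|}\langle\eta,\psi_{\{,\}}(x)v\rangle$ and $\langle\psi^{\ast}_{\bullet}(x)\eta,v\rangle=-(-1)^{|x||\eta|}\langle\eta,\psi_{\bullet}(x)v\rangle$, evaluating everything against an arbitrary homogeneous $v\in V$.

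First I would check that $(V^{\ast},\psi^{\ast}_{\{,\}})$ is a representation of the Lie superalgebra $(\mathcal{P},\{,\})$. Pairing $\psi^{\ast}_{\{,\}}(\{x,y\})\xi$ with $v$ and using that $\psi_{\{,\}}$ is a Lie representation gives $-(-1)^{(|x|+|y|)|\xi|}\langle\xi,(\psi_{\{,\}}(x)\psi_{\{,\}}(y)-(-1)^{|x||y|}\psi_{\{,\}}(y)\psi_{\{,\}}(x))v\rangle$; expanding $\langle(\psi^{\ast}_{\{,\}}(x)\psi^{\ast}_{\{,\}}(y)-(-1)^{|x||y|}\psi^{\ast}_{\{,\}}(y)\psi^{\ast}_{\{,\}}(x))\xi,v\rangle$ by moving both factors across the pairing produces exactly the same expression. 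This is the standard coadjoint construction and needs no extra hypothesis. Next, for $(V^{\ast},-\psi^{\ast}_{\bullet})$ I would compute $\langle\psi^{\ast}_{\bullet}(x)\psi^{\ast}_{\bullet}(y)\xi,v\rangle$ and, using commutativity of $\bullet$ (so $\psi_{\bullet}(y)\psi_{\bullet}(x)=\psi_{\bullet}(y\bullet x)=(-1)^{|x||y|}\psi_{\bullet}(x\bullet y)$), obtain $\langle\psi^{\ast}_{\bullet}(x\bullet y)\xi,v\rangle=-\langle\psi^{\ast}_{\bullet}(x)\psi^{\ast}_{\bullet}(y)\xi,v\rangle$. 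Hence $(-\psi^{\ast}_{\bullet})(x\bullet y)=(-\psi^{\ast}_{\bullet})(x)(-\psi^{\ast}_{\bullet})(y)$, which is precisely why the minus sign is inserted; $(V^{\ast},-\psi^{\ast}_{\bullet})$ is a representation of the commutative associative superalgebra $(\mathcal{P},\bullet)$.

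The heart of the argument is the two compatibility relations \eqref{super rep 1} and \eqref{super rep 2} for the pair $(\psi^{\ast}_{\{,\}},-\psi^{\ast}_{\bullet})$. I would pair each side with $v\in V$, push all the dual operators across, and cancel the common factor $\pm(-1)^{(|x|+|y|)|\xi|}$. For \eqref{super rep 1} the right-hand side collapses to $-(-1)^{(|x|+|y|)|\xi|}\langle\xi,(\psi_{\{,\}}(x)\psi_{\bullet}(y)+(-1)^{|x||y|}\psi_{\{,\}}(y)\psi_{\bullet}(x))v\rangle$ while the left-hand side is $-(-1)^{(|x|+|y|)|\xi|}\langle\xi,\psi_{\{,\}}(x\bullet y)v\rangle$, so the identity is equivalent to the first assumed hypothesis $\psi_{\{,\}}(x\bullet y)=\psi_{\{,\}}(x)\psi_{\bullet}(y)+(-1)^{|x||y|}\psi_{\{,\}}(y)\psi_{\bullet}(x)$. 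Similarly, \eqref{super rep 2} for the dual reduces, after the same manipulation, to $\psi_{\bullet}(\{x,y\})=\psi_{\{,\}}(x)\psi_{\bullet}(y)-(-1)^{|x||y|}\psi_{\bullet}(y)\psi_{\{,\}}(x)$, the second hypothesis. The conceptual point is that dualization transposes the two operators, turning the original compatibility conditions \eqref{super rep 1}--\eqref{super rep 2} into their ``swapped'' versions, and the extra hypotheses are exactly those swapped versions; the main obstacle is nothing deep but purely the careful tracking of the Koszul signs $(-1)^{|x||\xi|}$, $(-1)^{|y||\xi|}$ and $(-1)^{|x||y|}$ that arise each time an operator crosses the pairing or two operators are composed.

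Having established that $(V^{\ast},\psi^{\ast}_{\{,\}},-\psi^{\ast}_{\bullet})$ is a representation of $(\mathcal{P},\{,\},\bullet)$, the final claim is immediate: by Proposition \ref{S()YYY} applied to this representation, the direct sum $\mathcal{P}\oplus V^{\ast}$ equipped with the operations $\{x_{1}+\xi_{1},x_{2}+\xi_{2}\}=\{x_{1},x_{2}\}+\psi^{\ast}_{\{,\}}(x_{1})\xi_{2}-(-1)^{|x_{1}||x_{2}|}\psi^{\ast}_{\{,\}}(x_{2})\xi_{1}$ and $(x_{1}+\xi_{1})\bullet(x_{2}+\xi_{2})=x_{1}\bullet x_{2}-\psi^{\ast}_{\bullet}(x_{1})\xi_{2}-(-1)^{|x_{1}||x_{2}|}\psi^{\ast}_{\bullet}(x_{2})\xi_{1}$ is a Poisson superalgebra, namely $\mathcal{P}\ltimes_{\psi^{\ast}_{\{,\}},-\psi^{\ast}_{\bullet}}V^{\ast}$, completing the proof.
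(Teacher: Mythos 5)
Your proposal is correct and follows essentially the same route as the paper: quote the standard facts that $(V^{\ast},\psi^{\ast}_{\{,\}})$ and $(V^{\ast},-\psi^{\ast}_{\bullet})$ are dual representations of the Lie and commutative associative structures, reduce the two remaining compatibility identities to the two extra hypotheses by pairing against $v$ and tracking Koszul signs, and then invoke Proposition \ref{S()YYY} for the semidirect product. The sign computations you sketch match the paper's, so no gap.
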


\begin{proof} We know that if $(V,\psi_{\{,\}})$ is a representation of a Lie superalgebra $(\mathcal{P}, \{,\})$, then $(V^{\ast} ,\psi^{\ast}_{\{,\}})$ is a representation of the Lie superalgebra $(\mathcal{P}, \{,\})$. Also, if $(V, \psi_{\bullet})$ is a representation of a commutative associative superalgebra $(\mathcal{P}, \bullet)$, then $(V^{\ast}, -\psi^{\ast}_{\bullet})$ is a representation of the commutative associative superalgebra $(\mathcal{P}, \bullet)$. it remains to show
$$\psi^{\ast}_{\{,\}}(x\bullet y)+\psi^{\ast}_{\bullet}(y)\psi^{\ast}_{\{,\}}(x)
+(-1)^{{|x|}{|y|}}\psi^{\ast}_{\bullet}(x)\psi^{\ast}_{\{,\}}(y)=0,$$

$$-\psi^{\ast}_{\bullet}(\{x, y\})+\psi^{\ast}_{\{,\}}(x)\psi^{\ast}_{\bullet}(y)
-(-1)^{{|x|}{|y|}}\psi^{\ast}_{\bullet}(y)\psi^{\ast}_{\{,\}}(x)=0.$$
For this, we calculate
$$\langle  \textbf{(}\psi^{\ast}_{\{,\}}(x\bullet y)+\psi^{\ast}_{\bullet}(y)\psi^{\ast}_{\{,\}}(x)
+(-1)^{{|x|}{|y|}}\psi^{\ast}_{\bullet}(x)\psi^{\ast}_{\{,\}}(y)\textbf{)} \xi, v \rangle$$
\ \ \ \ \ \ \ \ \ \ \ \ \ \ \ \ \ \ \ $=\langle \xi, (-1)^{{{|x|}{|\xi|}}+{{|y|}{|\xi|}}} \textbf{(}-\psi_{\{,\}}(x\bullet y)+\psi_{\{,\}}(x)\psi_{\bullet}(y)
+(-1)^{{|x|}{|y|}}\psi_{\{,\}}(y)\psi_{\bullet}(x)\textbf{)} v\rangle=0,$

$$\langle  \textbf{(}-\psi^{\ast}_{\bullet}(\{x, y\})+\psi^{\ast}_{\{,\}}(x)\psi^{\ast}_{\bullet}(y)
-(-1)^{{|x|}{|y|}}\psi^{\ast}_{\bullet}(y)\psi^{\ast}_{\{,\}}(x)\textbf{)} \xi, v \rangle$$
\ \ \ \ \ \ \ \ \ \ \ \ \ \ \ \ \ \ \ $=\langle \xi, (-1)^{{{|x|}{|\xi|}}+{{|y|}{|\xi|}}} \textbf{(}\psi_{\bullet}(\{x, y\})-\psi_{\{,\}}(x)\psi_{\bullet}(y)
+(-1)^{{|x|}{|y|}}\psi_{\bullet}(y)\psi_{\{,\}}(x)\textbf{)} v\rangle=0.$
\\

So $(V^{\ast},\psi^{\ast}_{\{,\}}, -\psi^{\ast}_{\bullet})$ is a representation of Poisson superalgebra $(\mathcal{P}, \{,\}, \bullet)$. The remaining results follow from Proposition \ref{S()YYY} directly.
\end{proof}

\begin{example} Let $(\mathcal{P}, \{,\}, \bullet)$ be a Poisson superalgebra. Then $(\mathcal{P}, ad_{\{,\}}, L_{\bullet})$ and $(\mathcal{P}^{\ast}, ad_{\{,\}}^{\ast}, -L^{\ast}_{\bullet})$ are representation of Poisson superalgebra $(\mathcal{P}, \{,\}, \bullet)$.
\end{example}

In the sequel, we will present a method to construct a Poisson superalgebra structure on a direct sum $\mathcal{P}_1\oplus \mathcal{P}_2$ of the underlying vector spaces of two Poisson superalgebras $\mathcal{P}_1$
and $\mathcal{P}_2$ such that $\mathcal{P}_1$ and $\mathcal{P}_2$ are Poisson sub-superalgebra.

\begin{thm}\label{Theoreme} Let $(\mathcal{P}_1,\{,\}_1,\bullet_1)$ and $(\mathcal{P}_2,\{,\}_2,\bullet_2)$ be two Poisson superalgebras. Let $\rho_{\{,\}_1}, \varphi_{\bullet_1}: \mathcal{P}_1\longrightarrow End(\mathcal{P}_2)$ and $\rho_{\{,\}_2}, \varphi_{\bullet_2}: \mathcal{P}_2\longrightarrow End(\mathcal{P}_1)$ be four even linear maps such that $(\mathcal{P}_1, \mathcal{P}_2, \rho_{\{,\}_1}, \rho_{\{,\}_2})$ is a matched pair of Lie superalgebras and $(\mathcal{P}_1, \mathcal{P}_2, \varphi_{\bullet_1}, \varphi_{\bullet_2})$ is a matched pair of commutative associative superalgebras. If in addition $(\mathcal{P}_2, \rho_{\{,\}_1}, \varphi_{\bullet_1})$ and $(\mathcal{P}_1, \rho_{\{,\}_2}, \varphi_{\bullet_2})$ are representations of the Poisson superalgebras $(\mathcal{P}_1,\{,\}_1,\bullet_1)$ and $(\mathcal{P}_2,\{,\}_2,\bullet_2)$ respectively, and $\rho_{\{,\}_1}, \rho_{\{,\}_2}, \varphi_{\bullet_1}, \varphi_{\bullet_2}$ are compatible in the following
sense:
\begin{align}\label{Hom 1}
\rho_{\{,\}_2}(a)(x\bullet_1 y )&=(\rho_{\{,\}_2}(a)x)\bullet_1 y +(-1)^{{|a|}{|x|}} x \bullet_1 (\rho_{\{,\}_2}(a)y)
\\&-(-1)^{{|a|}{|x|}}\varphi_{\bullet_2}(\rho_{\{,\}_{1}}(x)a)y
-(-1)^{{{|a|}{|y|}}+{{|x|}{|y|}}}\varphi_{\bullet_2}(\rho_{\{,\}_{1}}(y)a)x,
\nonumber\end{align}
\begin{align}\label{Hom 2}
\{x,\varphi_{\bullet_2}(a)(y)\}_1-(-1)^{{{|x|}{|a|}}+{{|x|}{|y|}}+{{|a|}{|y|}}}\rho_{\{,\}_2}(\varphi_{\bullet_1}(y)a)x &=\varphi_{\bullet_2}(\rho_{\{,\}_{1}}(x)a)y-(-1)^{{|x|}{|a|}}(\rho_{\{,\}_2}(a)x)\bullet_1y\\
&+(-1)^{{|x|}{|a|}}\varphi_{\bullet_2}(a)(\{x,y\}_1),
\nonumber\end{align}
\begin{align}\label{Hom 3}
\rho_{\{,\}_1}(x)(a\bullet_2 b )&=(\rho_{\{,\}_1}(x)a)\bullet_2b +(-1)^{{|x|}{|a|}}a\bullet_2 (\rho_{\{,\}_1}(x)b)\\&-(-1)^{{|x|}{|a|}}\varphi_{\bullet_1}(\rho_{\{,\}_{2}}(a)x)b
-(-1)^{{{|x|}{|b|}}+{{|a|}{|b|}}}\varphi_{\bullet_1}(\rho_{\{,\}_{2}}(b)x)a,
\nonumber\end{align}
\begin{align}\label{Hom 4}
\{a,\varphi_{\bullet_1}(x)(b)\}_2-(-1)^{{{|a|}{|x|}}+{{|a|}{|b|}}+{{|x|}{|b|}}}\rho_{\{,\}_1}(\varphi_{\bullet_2}(b)x)a &=\varphi_{\bullet_1}(\rho_{\{,\}_{2}}(a)x)b-(-1)^{{|a|}{|x|}}(\rho_{\{,\}_1}(x)a)\bullet_2 b\\&+(-1)^{{|a|}{|x|}}\varphi_{\bullet_1}(x)(\{a,b\}_2),
\nonumber\end{align}
for any $x,y\in \mathcal{P}_1$ and $a,b\in \mathcal{P}_2$,\\ then there exists a Poisson superalgebra structure $\{,\}$, $\bullet$ on the vector superspace $\mathcal{P}_1\oplus \mathcal{P}_2$ given by
$\{x+a,\ y+b\}= (\{x, y\}_{1}+\rho_{\{,\}_2}(a)y-(-1)^{{|x|}{|y|}}\rho_{\{,\}_2}(b)x)+(\{a, b\}_{2}+\rho_{\{,\}_1}(x)b-(-1)^{{|x|}{|y|}}\rho_{\{,\}_1}(y)a),$

$(x+a)\bullet (y+b)= (x\bullet_1 y+\varphi_{\bullet_2}(a)y+(-1)^{{|x|}{|y|}}\varphi_{\bullet_2}(b)x)+ (a\bullet_2 b+\varphi_{\bullet_1}(x)b+(-1)^{{|x|}{|y|}}\varphi_{\bullet_1}(y)a).$
\end{thm}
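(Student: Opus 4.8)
The strategy rests on the defining decomposition of a Poisson superalgebra: $(\mathcal{A},\{,\},\bullet)$ is one exactly when $(\mathcal{A},\{,\})$ is a Lie superalgebra, $(\mathcal{A},\bullet)$ is a commutative associative superalgebra, and the Leibniz-type identity (\ref{awww}) relates the two operations. I would split the verification accordingly. First, observe that the bracket $\{,\}$ written in the statement is precisely the matched-pair bracket $\mathcal{P}_1\bowtie^{\rho_{\{,\}_2}}_{\rho_{\{,\}_1}}\mathcal{P}_2$; since $(\mathcal{P}_1,\mathcal{P}_2,\rho_{\{,\}_1},\rho_{\{,\}_2})$ is assumed to be a matched pair of Lie superalgebras, the matched-pair theorem for Lie superalgebras recalled above gives at once that $(\mathcal{P}_1\oplus\mathcal{P}_2,\{,\})$ is a Lie superalgebra. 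In the same way, $\bullet$ is the matched-pair product $\mathcal{P}_1\bowtie^{\varphi_{\bullet_2}}_{\varphi_{\bullet_1}}\mathcal{P}_2$, so the matched-pair theorem for commutative associative superalgebras gives that $(\mathcal{P}_1\oplus\mathcal{P}_2,\bullet)$ is a commutative associative superalgebra. Neither of these steps requires new computation.

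The core of the argument is then the Leibniz compatibility $\{u,v\bullet w\}=\{u,v\}\bullet w+(-1)^{|u||v|}v\bullet\{u,w\}$ for all homogeneous $u,v,w\in\mathcal{P}_1\oplus\mathcal{P}_2$. As this is linear in each argument in the graded sense and $\mathcal{P}_1$ together with $\mathcal{P}_2$ spans $\mathcal{P}_1\oplus\mathcal{P}_2$, it suffices to take each of $u,v,w$ inside $\mathcal{P}_1$ or inside $\mathcal{P}_2$. Since $\mathcal{P}_1$ (resp.\ $\mathcal{P}_2$) is closed under $\{,\}$ and $\bullet$, with the induced operations being $\{,\}_1,\bullet_1$ (resp.\ $\{,\}_2,\bullet_2$), the two cases in which all of $u,v,w$ lie in one factor reduce immediately to (\ref{awww}) inside $\mathcal{P}_1$ and inside $\mathcal{P}_2$. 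For the mixed cases, swapping $v$ with $w$ together with graded-commutativity of $\bullet$ identifies the identity for $(u,v,w)$ with that for $(u,w,v)$, and the interchange $\mathcal{P}_1\leftrightarrow\mathcal{P}_2$ (with the simultaneous relabelling of all structure maps, under which both the hypotheses and the conclusion are symmetric) pairs up what is left; after these reductions it is enough to check two mixed cases by hand, say $u\in\mathcal{P}_1,\ v\in\mathcal{P}_1,\ w\in\mathcal{P}_2$ and $u\in\mathcal{P}_1,\ v,w\in\mathcal{P}_2$. In each I would substitute the definitions of $\{,\}$ and $\bullet$ on both sides, expand, and compare the $\mathcal{P}_1$- and $\mathcal{P}_2$-components separately: one of the two components turns out to be exactly a representation-compatibility identity (\ref{super rep 1}) or (\ref{super rep 2}) for $(\mathcal{P}_2,\rho_{\{,\}_1},\varphi_{\bullet_1})$ or for $(\mathcal{P}_1,\rho_{\{,\}_2},\varphi_{\bullet_2})$ (part of the hypotheses), while the other, after using graded-commutativity of $\bullet_1$ or $\bullet_2$ to move a $\varphi$-term past a product, is exactly one of the compatibility conditions (\ref{Hom 1})--(\ref{Hom 4}). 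Together with the two mirror cases supplied by the $\mathcal{P}_1\leftrightarrow\mathcal{P}_2$ symmetry, this establishes (\ref{awww}) on $\mathcal{P}_1\oplus\mathcal{P}_2$ and finishes the proof.

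I expect the only genuine difficulty to be sign bookkeeping: each mixed case produces roughly a dozen terms on each side, and reconciling the Koszul signs with those in (\ref{Hom 1})--(\ref{Hom 4}) needs care, notably at the small steps where one rewrites $p\bullet_i q$ as $(-1)^{|p||q|}q\bullet_i p$. A second thing to pin down is completeness: one should confirm that the two representative mixed cases and their mirrors between them reproduce all four of (\ref{Hom 1})--(\ref{Hom 4}) and nothing more, which is the same as the remark that, read backwards, the computation shows these conditions are also necessary. The write-up can be shortened by appealing to both equivalent forms of (\ref{awww}), expanding $\{u,v\bullet w\}$ in some cases and $\{u\bullet v,w\}$ in others so as to keep the product on the pair of arguments that behaves more simply; alternatively one may just mimic the brute-force expansion used above for the two matched-pair theorems and run through the single general identity with $u=x+a$, $v=y+b$, $w=z+c$ generic.
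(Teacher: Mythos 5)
Your proposal follows essentially the same route as the paper: the Lie and associative structures are delegated to the two matched-pair theorems, and the remaining work is the Leibniz compatibility, which the paper likewise reduces to the mixed cases (its equations (\ref{super POI 1})--(\ref{super POI 2})) and identifies with the representation conditions (\ref{super rep 1})--(\ref{super rep 2}) together with (\ref{Hom 1})--(\ref{Hom 4}). Your additional observation that the $v\leftrightarrow w$ and $\mathcal{P}_1\leftrightarrow\mathcal{P}_2$ symmetries cut the six mixed cases down to two representatives is sound and, if anything, slightly more explicit than the paper's own (unexpanded) verification.
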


We denote this Poisson superalgebra by $\mathcal{P}_1 \bowtie^{\rho_{\{,\}_2}, \varphi_{\bullet_2}}_{\rho_{\{,\}_1}, \varphi_{\bullet_1}}\mathcal{P}_2$ or simply $\mathcal{P}_1\bowtie\mathcal{P}_2$. 

Moreover, $(\mathcal{P}_1, \mathcal{P}_2, \rho_{\{,\}_1}, \varphi_{\bullet_1}, \rho_{\{,\}_2}, \varphi_{\bullet_2})$ satisfying the above conditions is called a matched pair of Poisson superalgebras. On the other hand, every Poisson superalgebra which is a direct sum of the underlying superspaces of sub-superalgebra
can be obtained in the above way.

\begin{proof} If fact, $\mathcal{P}_1\oplus \mathcal{P}_2$ becomes a Poisson superalgebra if and only if the following equations are satisfied (for any $x, y\in\mathcal{P}_1$, $a, b\in\mathcal{P}_2$):
\begin{eqnarray}\label{super nouveau 1}
(-1)^{{|a|}{|y|}}\{a,\{x,y\}\}+(-1)^{{|x|}{|a|}}\{x,\{y,a\}\}+(-1)^{{|y|}{|x|}}\{y,\{a,x\}\}=0, \end{eqnarray}
\begin{eqnarray}\label{super nouveau 2}
(-1)^{{|x|}{|b|}}\{x,\{a,b\}\}+(-1)^{{|a|}{|x|}}\{a,\{b,x\}\}+(-1)^{{|b|}{|a|}}\{b,\{x,a\}\}=0, \end{eqnarray}
\begin{eqnarray}\label{ETE}
(a\bullet x)\bullet y=a\bullet( x\bullet y), \ (x\bullet a)\bullet y=x\bullet( a\bullet y), \ (x\bullet a)\bullet b=x\bullet( a\bullet b), \ (a\bullet x)\bullet b=a\bullet( x\bullet b),
\end{eqnarray}
\begin{eqnarray}\label{super POI 1}
\{a,x\bullet y\}=\{a,x\}\bullet y+(-1)^{{|a|}{|x|}}x\bullet\{a,y\}, \ \{x,a\bullet y\}=\{x,a\}\bullet y+(-1)^{{|a|}{|x|}}a\bullet\{x,y\},
\end{eqnarray}
\begin{eqnarray}\label{super POI 2}
\{x,a\bullet b\}=\{x,a\}\bullet b+(-1)^{{|a|}{|x|}}a\bullet\{x,b\}, \ \{a,x\bullet b\}=\{a,x\}\bullet b+(-1)^{{|a|}{|x|}}x\bullet\{a,b\}.
\end{eqnarray}

Note that Eq. (\ref{super nouveau 1}) and (\ref{super nouveau 2})  is equivalent to the fact that $(\mathcal{P}_1, \mathcal{P}_2, \rho_{\{,\}_1}, \rho_{\{,\}_2})$ is a matched pair of Lie superalgebras and Eq. (\ref{ETE}) are equivalent to the fact that $(\mathcal{P}_1, \mathcal{P}_2, \varphi_{\bullet_1}, \varphi_{\bullet_2})$ is a matched pair of commutative associative superalgebras. Moreover, Eqs.(\ref{super POI 1}) and (\ref{super POI 1}) are equivalent to the fact that $(\rho_{\{,\}_1}, \varphi_{\bullet_1})$ and $(\rho_{\{,\}_2}, \varphi_{\bullet_2})$ satisfy the compatibility conditions (\ref{super rep 1}) and (\ref{super rep 2}) and Eqs. (\ref{Hom 1}), (\ref{Hom 2}), (\ref{Hom 3}), (\ref{Hom 4}).
\end{proof}

\section{Poisson superbialgebras}
We recall first  some basic notions and facts about Lie superbialgebras and commutative and cocommutative infinitesimal superbialgebras.

\begin{df} A Lie supercoalgebra is a pair $(\mathcal{A}, \delta)$ consisting of a superspace $\mathcal{A}$ and a linear map $\delta:\mathcal{A} \longrightarrow \mathcal{A}\otimes\mathcal{A}$ satisfying the following conditions:
\begin{eqnarray}\label{cocrochet}
\delta(\mathcal{A}^i) \subset \sum_{i=j+k} \mathcal{A}^j\otimes\mathcal{A}^k \ \  for \ \  i\in\mathbb{Z}_{2},
\end{eqnarray}
\begin{eqnarray}
 Im\delta\subset Im(id\otimes id-\tau)  \ \  i.e. \ \ \delta \ is \ skew-supersymmetric,
\end{eqnarray}
\begin{eqnarray}
(id\otimes id\otimes id+\xi+\xi^2)\circ(id\otimes\delta)\circ\delta=0 : \mathcal{A}\rightarrow \mathcal{A}\otimes\mathcal{A}\otimes\mathcal{A}.
\end{eqnarray}
\end{df}

\begin{df} A coassociative supercoalgebra is a pair $(\mathcal{A}, \Delta)$ consisting of a superspace $\mathcal{A}$ and a  linear map $\Delta:\mathcal{A} \longrightarrow \mathcal{A}\otimes\mathcal{A}$ satisfying the following conditions:
\begin{eqnarray}
\Delta(\mathcal{A}^i) \subset \sum_{i=j+k} \mathcal{A}^j\otimes\mathcal{A}^k \ \  for \ \  i\in\mathbb{Z}_{2},
\end{eqnarray}
\begin{eqnarray}
(id\otimes\Delta)\circ\Delta=(\Delta\otimes id)\circ\Delta  \ \ \ \ \ (coassociativity).
\end{eqnarray}
\end{df}

\begin{rem}
\textbf{1)} If $(\mathcal{A}, [\cdot ,\cdot ])$ is a Lie superalgebra, we let $ad_{[\cdot ,\cdot ]}(x)=ad(x)$ denote the adjoint operator, that is, $ad_{[\cdot ,\cdot ]}(x)y=ad(x)y=[x, y]$ for any $x, y\in\mathcal{A}$. For an element $x$ in a Lie superalgebra $(\mathcal{A}, [\cdot ,\cdot ])$ and $n\geq 2$, define the adjoint map $ad(x):\mathcal{A}^{\otimes n}\rightarrow \mathcal{A}^{\otimes n}$ by
$$ad(x)(y_{1}\otimes\cdot\cdot\cdot\otimes y_{n})=\sum_{i=1}^{n} (-1)^{{|x|}({|y_{1}|+{|y_{2}|}}+\cdot\cdot\cdot+{|y_{i-1}|})}y_{1}\otimes\cdot\cdot\cdot\otimes y_{i-1}\otimes[x, y_{i}]\otimes y_{i+1}\cdot\cdot\cdot\otimes y_{n}.$$
For  $n=2$, $ad(x)(y_{1}\otimes y_{2})=[x, y_{1}]\otimes y_{2}+(-1)^{{|x|}{|y_{1}|}}y_{1}\otimes[x, y_{2}]$. Conversely, given $\gamma=y_{1}\otimes\cdot\cdot\cdot\otimes y_{n}$, we define the map $ad(\gamma):\mathcal{A}\rightarrow \mathcal{A}^{\otimes n}$ by $ad(\gamma)(x)=ad(x)(\gamma)$, for $x\in\mathcal{A}$.

\textbf{2)} Let $(\mathcal{A}, \bullet)$ be an associative superalgebra with a bilinear map $\bullet:\mathcal{A}\times\mathcal{A}\longrightarrow \mathcal{A}$, $L_{\bullet}(x)$ and let $R_{\bullet}(x)$ denote the left and right multiplication operator respectively, that is, $L_{\bullet}(x)y=(-1)^{{|x|}{|y|}}R_{\bullet}(y)x=x\bullet y$, for all homogeneous elements  $x, y \in\mathcal{A}$.

We also simply denote them by $L(x)$ and $R(x)$ respectively when there is no confusion.
\end{rem}

\begin{df}\label{ae} \cite{Hengyun Y. and Yucai S} A Lie superbialgebra is a triple $(\mathcal{A}, [\cdot,\cdot], \delta)$ such that
\begin{enumerate}
\item $(\mathcal{A}, [\cdot,\cdot])$ is a Lie superalgebra.
\item $(\mathcal{A}, \delta)$ is a Lie supercoalgebra.
\item The following compatibility condition holds for all $x, y \in\mathcal{A}$ :
\begin{eqnarray}\label{a}
\delta([x,y])=(ad(x)\otimes id+id\otimes ad(x))\delta(y)-(-1)^{{|x|}{|y|}}(ad(y)\otimes id+id\otimes ad(y))\delta(x).
\end{eqnarray}
\end{enumerate}
\end{df}

 The map $f:\mathcal{A} \rightarrow \mathcal{A}'$ is called  even (resp. odd) map if $f(\mathcal{A}_{i})\subset \mathcal{A}'_{i}$ (resp. $f(\mathcal{A}_{i})\subset\mathcal{A}'_{i+1}),$ for $i=0, 1$.
A morphism of Lie superbialgebras is an even  linear map such that \\
$$f\circ[\cdot ,\cdot ]=[\cdot ,\cdot ]\circ f^{\otimes2}\ \ \ \      \text{and }\ \ \  \delta\circ f=f^{\otimes2}\circ\delta.$$\\
An isomorphism of Lie superbialgebras is an invertible morphism of Lie superbialgebras. Two Lie superbialgebras are said to be isomorphic if there exists an isomorphism between them.

\begin{df}\label{aeee} An infinitesimal superbialgebra is a triple $(\mathcal{A}, \bullet, \Delta)$ such that
\begin{enumerate}
\item $(\mathcal{A}, \bullet)$ is a associative superalgebra,
\item $(\mathcal{A}, \Delta)$ is a coassociative
supercoalgebra,
\item The following compatibility condition holds for all $a, b \in\mathcal{A}$ :
\begin{eqnarray}\label{aee}
\Delta(a\bullet b)=(L_{\bullet}(a)\otimes id)\Delta(b)+(id\otimes R_{\bullet}(b))\Delta(a).
\end{eqnarray}
\end{enumerate}
\end{df}
A morphism of infinitesimal superbialgebras is a linear map that commutes
the multiplications and the comultiplications.

\begin{df} Let $(\mathcal{A}, \{,\}, \bullet)$ be a Poisson superalgebra. Suppose that it is equipped with two comultiplications $\delta, \ \Delta: \mathcal{A}\rightarrow \mathcal{A}\otimes\mathcal{A}$ such that $(\mathcal{A}, \delta, \Delta)$ is a Poisson supercoalgebra, that is, $(\mathcal{A}, \delta)$ is a Lie supercoalgebra and  $(\mathcal{A}, \Delta)$ is a coassociative supercoalgebra which is cocommutative and they satisfy the
following compatible condition for all $x\in\mathcal{A}$:
\begin{eqnarray}\label{aa}
(id\otimes\Delta)\delta(x)=(\delta\otimes id)\Delta(x)+(\tau\otimes id)(id\otimes\delta)\Delta(x).
\end{eqnarray}
If in addition, $(\mathcal{A}, \{,\}, \delta)$ is a Lie superbialgebra and $(\mathcal{A}, \bullet, \Delta)$ is a commutative and cocommutative infinitesimal superbialgebra and $\delta$, $\Delta$ are compatible in the following sense, for all $x, y\in\mathcal{A}$, $a, b\in\mathcal{A}^{\ast}$:
\begin{align}\label{atttt}
\langle \delta(x\bullet y), b\otimes a\rangle &=\langle (-1)^{{|x|}{|y|}}(L_{\bullet}(y)\otimes id)\delta(x)+(L_{\bullet}(x)\otimes id)\delta(y)\\
&+(-1)^{{|x|}{|b|}}(id\otimes ad_{\{, \}}(x))\Delta(y)+(-1)^{{{|y|}{|b|}}+{{|x|}{|y|}}}(id\otimes ad_{\{, \}}(y))\Delta(x), b\otimes a\rangle,
\nonumber\end{align}

\begin{align}\label{aaattt}
\langle \Delta(\{x, y\}), b\otimes a\rangle &=\langle(ad_{\{, \}}(x)\otimes id+(-1)^{{|x|}{|b|}}id\otimes ad_{\{, \}}(x))\Delta(y)\\
&+(-1)^{{|x|}{|y|}}(L_{\bullet}(y)\otimes id-(-1)^{{|y|}{|b|}}id\otimes L_{\bullet}(y))\delta(x), b\otimes a\rangle,
\nonumber\end{align}

then $(\mathcal{A}, \{,\}, \bullet, \delta, \Delta)$ is called  Poisson superbialgebra.
\end{df}

A homomorphism between two Poisson supercoalgebras is defined as a $\mathbb{K}$-linear map between the two Poisson supercoalgebras that preserves the corresponding cooperations. A homomorphism between two Poisson superbialgebras is a homomorphism of both Poisson superalgebras and Poisson supercoalgebras.

\begin{example} Let $\mathcal{A}=\mathcal{A}_{\bar{0}}\oplus\mathcal{A}_{\bar{1}}$ be a 2-dimensional superspace where $\mathcal{A}_{\bar{0}}$ is generated by $e_{1}$ and $\mathcal{A}_{\bar{1}}$ is generated by $e_{2}$. Then  $(\mathcal{A}, \{,\}, \bullet, \delta, \Delta)$ is a Poisson superbialgebra in the following situation :\\
The pair $(\mathcal{A}, \{,\})$ is a Lie superalgebra when $$\{e_{1}, e_{2}\}=b e_{2}, \ \ \{e_{2}, e_{2}\}=c e_{1}, \ \ \{e_{1}, e_{1}\}=0, \ \ with \ \ \ \ b c=0.$$
The multiplication $\bullet$ and $\Delta$ are  of the form :\\
$$e_{1}\bullet e_{2}=d e_{2}, \ \  e_{1}\bullet e_{1}=k e_{1}, \ \  e_{2}\bullet e_{2}=f e_{1} \ \ and \ \ \ \Delta(e_{1})=\Delta(e_{2})=0.$$
The pair $(\mathcal{A}, \bullet)$ is a commutative associative superalgebra when $ d=k$.\\
The cobracket $\delta$ is of the form :
$\delta(e_{1})=c_{1}e_{2}\otimes e_{2}, \  \delta(e_{2})=c_{2}(e_{1}\otimes e_{2}-e_{2}\otimes e_{1})$.\\
The triple $(\mathcal{A}, \{,\}, \delta)$ is a Lie superbialgebra if \ $c_{1}c_{2}=0$ and $cc_{1}=-4 b c_{2}$.\\
Also  $\delta$ and $\Delta$ are compatible if
$k c_{1}=2 d c_{1}$ and $fc_{2}=0$, where $b, c, d, k, f, c_{1}, c_{2}$ are parameters in $\mathbb{K}$.
Therefore,   we have a Poisson superalgebra for $c=d=k=c_2=0$ or $b=f=c_1=0$ and $d=k$. 
\end{example}

\section{ Manin triples of Poisson superalgebras and Poisson superbialgebras}

We introduce first a notion of Manin triple of Poisson superalgebras which is an analogue of the notion of Manin triple for Lie superalgebras \cite{newapproach-Bai, V. Chari}.

\begin{df} A Manin triple of Poisson superalgebras $(\mathcal{P}, \mathcal{P}^{+}, \mathcal{P}^{-})$ is a triple of Poisson superalgebras $\mathcal{P}$, $\mathcal{P}^{+}$, and $\mathcal{P}^{-}$ together with a nondegenerate supersymmetric bilinear form $B$ on $\mathcal{P}$ which is invariant in the sense that
\begin{eqnarray}
B(\{x, y\}, z)=B(x, \{y, z\}), \ \ \ \ \ \ \ \ \ \ \ \ B(x\bullet y, z)=B(x, y\bullet z),
\end{eqnarray}
for  any $x, y, z\in \mathcal{P}$, satisfying the following conditions:
\begin{enumerate}
\item  $\mathcal{P}^{+}$ and $\mathcal{P}^{-}$ are Poisson sub-superalgebra of $\mathcal{P}$,
\item  $\mathcal{P}=\mathcal{P}^{+}\oplus \mathcal{P}^{-}$ as $\mathbb{K}$-vector superspace,
\item  $\mathcal{P}^{+}$ and $\mathcal{P}^{-}$ are isotropic with respect to $B$.

\end{enumerate}
\end{df}
\
\\
A homomorphism between two Manin triples of Poisson superalgebras $(\mathcal{P}_{1}, \mathcal{P}_{1}^{+}, \mathcal{P}_{1}^{-})$ and $(\mathcal{P}_{2}, \mathcal{P}_{2}^{+}, \mathcal{P}_{2}^{-})$ with two nondegenerate supersymmetric invariant bilinear forms $B_{1}$ and $B_{2}$ respectively is a homomorphism of Poisson superalgebras $\varphi: \mathcal{P}_{1}\rightarrow \mathcal{P}_{2}$ such that
\begin{eqnarray}
\varphi(\mathcal{P}_{1}^{+})\subset \mathcal{P}_{2} ^{+}, \ \ \ \  \varphi(\mathcal{P}_{1}^{-})\subset \mathcal{P}_{2} ^{-}, \ \ \  B_{1}(x, y)=\varphi^{\ast}B_{2}(x, y)=B_{1}(\varphi(x), \varphi(y)),
\end{eqnarray}
for  any $x, y\in \mathcal{P}_{1}$.\\

Obviously, a Manin triple of Poisson superalgebras is just a triple of Poisson superalgebras such that they are both a Manin triple of Lie superalgebra and a commutative associative version of Manin triple with the same nondegenerate supersymmetric bilinear form (and share the same isotropic sub-superalgebra). Moreover, it is easy to see that $\mathcal{P}^{+}$ and $\mathcal{P}^{-}$ are both Lagrangian sub-superalgebras of $\mathcal{P}$.\\
In particular, there is a special (standard) Manin triple of Poisson superbialgebras as follows. Let $(\mathcal{P}, \{,\}, \bullet)$ be a Poisson superalgebra. If there is a Poisson superalgebra structure on the direct sum of the
underlying vector superspace of $P$ and its dual superspace $\mathcal{P}^{\ast}$ such that $(\mathcal{P}\oplus \mathcal{P}^{\ast}, \mathcal{P}, \mathcal{P}^{\ast})$ is a Manin triple of Poisson superalgebras with the invariant supersymmetric bilinear form on $\mathcal{P}\oplus \mathcal{P}^{\ast}$ given by
\begin{eqnarray}\label{Manin}
B_{p}(x+a, y+b)=\langle x, b  \rangle + \langle a, y \rangle, \ \ \ \ \ \ \  for \ any \ x, y \in \mathcal{P}, \ \ a, b\in \mathcal{P}^{\ast},
\end{eqnarray}
then $(\mathcal{P}\oplus \mathcal{P}^{\ast}, \mathcal{P}, \mathcal{P}^{\ast})$ is called a standard Manin triple of Poisson superalgebras.\\
Obviously, a standard Manin triple of Poisson superalgebras is a Manin triple of Poisson superalgebras. Conversely, it is easy to show that every Manin triple of
Poisson superalgebras is isomorphic to a standard
one. Furthermore, it is straightforward to get the following structure Theorem (\cite{V. Chari} and \cite{C. Bai 1}).\\

In the following, we concentrate on the case that $\mathcal{P}_1$ is $\mathcal{P}$ and $\mathcal{P}_2$ is $\mathcal{P^{\ast}}$, where $\mathcal{P^{\ast}}$ is the dual space of $\mathcal{P}$, and $\rho_{\{,\}_i}=ad^{\ast}_{\{,\}_i}$, $\varphi_{\bullet_i}=-L^{\ast}_{\bullet_i}$ $(i=1, 2)$.

\begin{thm} Let $(\mathcal{P} ,\{,\}_1,\bullet_1)$ and $(\mathcal{P^{\ast}}, \{,\}_2,\bullet_2)$ be two Poisson superalgebras. Then $(\mathcal{P}\oplus \mathcal{P}^{\ast}, \mathcal{P}, \mathcal{P}^{\ast})$ is a standard Manin triple of Poisson superalgebras if and only if $(\mathcal{P}, \mathcal{P}^{\ast}, ad^{\ast}_{\{,\}_1}, -L^{\ast}_{\bullet_1}, ad^{\ast}_{\{,\}_2}, -L^{\ast}_{\bullet_2})$ is a matched pair of Poisson superalgebras.
\end{thm}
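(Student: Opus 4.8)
The plan is to reduce the statement to the matched-pair criterion of Theorem~\ref{Theoreme} by specializing $\mathcal{P}_1=\mathcal{P}$, $\mathcal{P}_2=\mathcal{P}^{\ast}$, $\rho_{\{,\}_i}=ad^{\ast}_{\{,\}_i}$ and $\varphi_{\bullet_i}=-L^{\ast}_{\bullet_i}$, and then translate each of the Manin-triple axioms into the corresponding compatibility condition. First I would unwind what the standard Manin triple structure demands: by definition there is a Poisson superalgebra structure on $\mathcal{P}\oplus\mathcal{P}^{\ast}$ for which $B_p$ of \eqref{Manin} is nondegenerate, supersymmetric and invariant with respect to both operations $\{,\}$ and $\bullet$, and for which $\mathcal{P}$ and $\mathcal{P}^{\ast}$ are isotropic Poisson sub-superalgebras. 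The isotropy of $\mathcal{P}$ and $\mathcal{P}^{\ast}$ together with nondegeneracy of $B_p$ forces the mixed brackets and products $\{x,a\}$, $x\bullet a$ (for $x\in\mathcal{P}$, $a\in\mathcal{P}^{\ast}$) to be determined by actions of $\mathcal{P}$ on $\mathcal{P}^{\ast}$ and of $\mathcal{P}^{\ast}$ on $\mathcal{P}$; invariance of $B_p$ under $\{,\}$ and $\bullet$ then identifies these actions precisely as $ad^{\ast}_{\{,\}}$ and $-L^{\ast}_{\bullet}$ (and their duals). This is the standard dictionary already used in the Lie and associative cases, which the excerpt cites, so I would invoke those two classical results to get the two separate matched pairs $(\mathcal{P},\mathcal{P}^{\ast},ad^{\ast}_{\{,\}_1},ad^{\ast}_{\{,\}_2})$ of Lie superalgebras and $(\mathcal{P},\mathcal{P}^{\ast},-L^{\ast}_{\bullet_1},-L^{\ast}_{\bullet_2})$ of commutative associative superalgebras for free.

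Next I would show that the bilinear form $B_p$ is invariant under a given bracket and product on $\mathcal{P}\oplus\mathcal{P}^{\ast}$ (extending those on $\mathcal{P}$ and $\mathcal{P}^{\ast}$, with $\mathcal{P},\mathcal{P}^{\ast}$ sub-superalgebras) if and only if the resulting structure is exactly $\mathcal{P}\bowtie^{ad^{\ast}_{\{,\}_2},-L^{\ast}_{\bullet_2}}_{ad^{\ast}_{\{,\}_1},-L^{\ast}_{\bullet_1}}\mathcal{P}^{\ast}$ in the notation of Theorem~\ref{Theoreme}; conversely the latter always carries $B_p$ as an invariant form. Given that, "$\mathcal{P}\oplus\mathcal{P}^{\ast}$ is a Poisson superalgebra with $B_p$ invariant" is equivalent to "$\mathcal{P}\bowtie\mathcal{P}^{\ast}$ (with these specific actions) is a Poisson superalgebra", which by Theorem~\ref{Theoreme} is equivalent to the six conditions: the two Lie matched-pair conditions, the commutative-associative matched-pair conditions \eqref{ETE}, the representation-compatibility conditions \eqref{super rep 1}--\eqref{super rep 2} for $(ad^{\ast}_{\{,\}_1},-L^{\ast}_{\bullet_1})$ and $(ad^{\ast}_{\{,\}_2},-L^{\ast}_{\bullet_2})$, and the four cross conditions \eqref{Hom 1}--\eqref{Hom 4}. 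Assembling all of these is precisely the assertion that $(\mathcal{P},\mathcal{P}^{\ast},ad^{\ast}_{\{,\}_1},-L^{\ast}_{\bullet_1},ad^{\ast}_{\{,\}_2},-L^{\ast}_{\bullet_2})$ is a matched pair of Poisson superalgebras, which closes the loop.

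I would organize the proof in two implications. For ($\Rightarrow$): assume the standard Manin triple; use isotropy plus nondegeneracy to recover the mixed operations, use invariance of $B_p$ under $\{,\}$ to pin the $\mathcal{P}$-action on $\mathcal{P}^{\ast}$ as $ad^{\ast}_{\{,\}_1}$ and dually, and under $\bullet$ to pin the $\mathcal{P}$-action on $\mathcal{P}^{\ast}$ as $-L^{\ast}_{\bullet_1}$ and dually, then conclude via Theorem~\ref{Theoreme} applied to the ambient Poisson superalgebra that the matched-pair axioms hold. For ($\Leftarrow$): given the matched pair, form $\mathcal{P}\bowtie\mathcal{P}^{\ast}$ by Theorem~\ref{Theoreme}; it is a Poisson superalgebra containing $\mathcal{P}$ and $\mathcal{P}^{\ast}$ as Poisson sub-superalgebras; check directly that $B_p$ is supersymmetric, nondegenerate (clear from its definition as the canonical pairing), that $\mathcal{P}$ and $\mathcal{P}^{\ast}$ are isotropic (immediate), and that $B_p$ is invariant under both $\{,\}$ and $\bullet$ on the whole of $\mathcal{P}\oplus\mathcal{P}^{\ast}$ — the latter being a short computation using the definitions of $ad^{\ast}_{\{,\}}$ and $-L^{\ast}_{\bullet}$. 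This gives a standard Manin triple.

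\textbf{Main obstacle.} The bookkeeping of the Koszul signs in the invariance computation for $B_p$ is the delicate point: one must verify $B_p(\{u,v\},w)=B_p(u,\{v,w\})$ and $B_p(u\bullet v,w)=B_p(u,v\bullet w)$ for all $u,v,w$ ranging over the four mixed types of elements of $\mathcal{P}\oplus\mathcal{P}^{\ast}$, and matching these identities term-by-term against the defining relations $\langle ad^{\ast}_{\{,\}}(x)a,v\rangle=-(-1)^{|x||a|}\langle a,\{x,v\}\rangle$ and $\langle -L^{\ast}_{\bullet}(x)a,v\rangle=(-1)^{|x||a|}\langle a,x\bullet v\rangle$ is where sign errors are easiest to make. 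Everything else — the existence of the semidirect/matched structure, the recovery of the actions from isotropy — is formal once one has the Lie and associative prototypes, which the excerpt allows us to cite.
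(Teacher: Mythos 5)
Your proposal is correct, and it is worth noting that the paper itself offers no proof of this statement: it is introduced with the phrase ``it is straightforward to get the following structure Theorem'' and a pointer to the references, and the only place the mechanism surfaces is in the proof of the subsequent three-way equivalence, which silently uses exactly the reduction you describe. Your route --- specialize Theorem~\ref{Theoreme} to $\mathcal{P}_1=\mathcal{P}$, $\mathcal{P}_2=\mathcal{P}^{\ast}$, use isotropy plus nondegeneracy of $B_p$ to force the mixed operations to be given by actions, and use invariance of $B_p$ to identify those actions as $ad^{\ast}_{\{,\}_i}$ and $-L^{\ast}_{\bullet_i}$ --- is the standard argument from the Lie and associative prototypes and assembles correctly into the Poisson matched-pair conditions \eqref{Hom 1}--\eqref{Hom 4} together with the two underlying matched pairs and the representation conditions \eqref{super rep 1}--\eqref{super rep 2}. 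Two small points deserve explicit mention if you write this up. First, in the forward direction you need the converse half of Theorem~\ref{Theoreme}; the theorem is stated as a one-way construction, but its proof is phrased as an ``if and only if,'' and the remark following it asserts the converse, so the appeal is legitimate provided you cite that. Second, you correctly identify the Koszul signs as the delicate part: with the paper's convention $\langle\psi^{\ast}_{\bullet}(x)\xi,v\rangle=-(-1)^{|x||\xi|}\langle\xi,\psi_{\bullet}(x)v\rangle$, the $\mathcal{P}^{\ast}$-component of $x\bullet a$ comes out as $-L^{\ast}_{\bullet_1}(x)a$ only after using supercommutativity to reorder $a\bullet x$, which is where a stray $(-1)^{|x||a|}$ is most easily lost; the formulas displayed in the proof of Theorem~\ref{fin de cobord} give a useful consistency check for your sign bookkeeping.
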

Like the Manin triples for Lie superalgebra corresponding to Lie superbialgebras and the commutative associative versions of Manin triples corresponding to commutative and cocommutative infinitesimal superbialgebras, \cite{M. Aguiar},  \cite{V. N. Zhelyabin} there is also a bialgebra structure which corresponds to a Manin triple of Poisson superalgebras following from the equivalent conditions of matched pairs of Poisson superalgebras.\\

\begin{thm} Let $(\mathcal{P} ,\{,\}_1,\bullet_1)$ and  $(\mathcal{P^{\ast}}, \{,\}_2,\bullet_2)$ be two  Poisson superalgebras equipped with two comultiplications $\delta, \ \Delta: \mathcal{P}\rightarrow \mathcal{P}\otimes\mathcal{P}$. Suppose that $\delta^{\ast}, \ \Delta^{\ast}: \mathcal{P}^{\ast}\otimes\mathcal{P}^{\ast}\subset(\mathcal{P}\otimes\mathcal{P})^{\ast}\rightarrow \mathcal{P}^{\ast}$ induce a Poisson superalgebra structure on $\mathcal{P}^{\ast}$, where $\delta^{\ast}$ and $\Delta^{\ast}$ correspond to the Lie bracket and the product of the commutative associative superalgebra respectively. Set $\{,\}_i=\delta^{\ast}$, $\bullet_i=\Delta^{\ast}$ $(i=1, 2)$. Then the following conditions are equivalent:
\begin{enumerate}
\item $(\mathcal{P}, \{,\}_1,\bullet_1, \delta, \Delta)$ and $(\mathcal{P^{\ast}}, \{,\}_2,\bullet_2, \delta, \Delta)$  are Poisson superbialgebras.
\item $(\mathcal{P}, \mathcal{P}^{\ast}, ad^{\ast}_{\{,\}_1}, -L^{\ast}_{\bullet_1}, ad^{\ast}_{\{,\}_2}, -L^{\ast}_{\bullet_2})$ is a matched pair of Poisson superalgebras.
\item $(\mathcal{P}\oplus \mathcal{P}^{\ast}, \mathcal{P}, \mathcal{P}^{\ast})$ is a standard Manin triple of Poisson superalgebras with the bilinear form defined by Eq. (\ref{Manin}) and the isotropic subsuperalgebras are $\mathcal{P}$ and $\mathcal{P}^{\ast}$.
\end{enumerate}
\end{thm}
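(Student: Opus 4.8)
# Proof Proposal

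\textbf{Overall strategy.} The plan is to prove the chain of equivalences by establishing $(1)\Leftrightarrow(2)$ directly and then invoking the already-proved theorem relating standard Manin triples to matched pairs (the immediately preceding theorem) to get $(2)\Leftrightarrow(3)$. Since that preceding theorem already gives us $(2)\Leftrightarrow(3)$ for free, the entire substance of the proof reduces to showing $(1)\Leftrightarrow(2)$: namely, that the bialgebra compatibility conditions on both $\mathcal{P}$ and $\mathcal{P}^\ast$ are together equivalent to the matched-pair axioms for $(\mathcal{P}, \mathcal{P}^\ast, ad^\ast_{\{,\}_1}, -L^\ast_{\bullet_1}, ad^\ast_{\{,\}_2}, -L^\ast_{\bullet_2})$.

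\textbf{Reduction via known bialgebra results.} First I would separate the Poisson superbialgebra structure into its Lie part and its commutative-associative part. A Poisson superbialgebra on $\mathcal{P}$ requires: (i) $(\mathcal{P},\{,\}_1,\delta)$ is a Lie superbialgebra, (ii) $(\mathcal{P},\bullet_1,\Delta)$ is a commutative cocommutative infinitesimal superbialgebra, (iii) the coalgebra compatibility \eqref{aa}, and (iv) the mixed compatibilities \eqref{atttt} and \eqref{aaattt}. By the known correspondence between Lie superbialgebras and standard Manin triples of Lie superalgebras, condition (i) on both $\mathcal{P}$ and $\mathcal{P}^\ast$ (where $\delta^\ast=\{,\}_2$) is equivalent to $(\mathcal{P},\mathcal{P}^\ast,ad^\ast_{\{,\}_1},ad^\ast_{\{,\}_2})$ being a matched pair of Lie superalgebras, which by \eqref{super nouveau 1}--\eqref{super nouveau 2} is one ingredient of the matched pair of Poisson superalgebras. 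Similarly, by the correspondence between commutative cocommutative infinitesimal superbialgebras and their Manin triples, condition (ii) on both sides is equivalent to $(\mathcal{P},\mathcal{P}^\ast,-L^\ast_{\bullet_1},-L^\ast_{\bullet_2})$ being a matched pair of commutative associative superalgebras, i.e. \eqref{ETE}. So two of the four groups of matched-pair axioms are handled by purely ``classical'' (non-super-new) bialgebra theory applied to the two operations separately.

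\textbf{The dualization step for the mixed conditions.} The heart of the argument is to show that the remaining matched-pair data — the condition that $(\mathcal{P}^\ast, ad^\ast_{\{,\}_1}, -L^\ast_{\bullet_1})$ and $(\mathcal{P}, ad^\ast_{\{,\}_2}, -L^\ast_{\bullet_2})$ are Poisson-superalgebra representations (the super-rep compatibilities \eqref{super rep 1}--\eqref{super rep 2} applied to the relevant actions), plus the four compatibility conditions \eqref{Hom 1}--\eqref{Hom 4} — is equivalent, via the pairing $B_p$ of \eqref{Manin}, to the collection: the coalgebra compatibility \eqref{aa}, the two mixed bialgebra compatibilities \eqref{atttt}, \eqref{aaattt} on $\mathcal{P}$, and their analogues on $\mathcal{P}^\ast$. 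The method here is dualization: rewrite each of \eqref{Hom 1}--\eqref{Hom 4} by pairing both sides against an arbitrary element and moving $ad^\ast$, $L^\ast$ across the pairing back to $ad$, $L$; the resulting identities, once one collects the terms carefully and tracks the Koszul signs introduced by $\tau$ and $\xi$, become exactly \eqref{atttt}, \eqref{aaattt} and their duals. The hypothesis \eqref{super rep 1}--\eqref{super rep 2} for the coadjoint/co-left-multiplication actions dualizes to the coalgebra-level compatibility \eqref{aa} together with the cocommutativity/coassociativity constraints already packaged into the definition of Poisson supercoalgebra. I would organize this as a sequence of lemmas, one per compatibility equation, each proved by a short dualization computation.

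\textbf{Expected main obstacle.} The genuine difficulty is purely bookkeeping: getting every Koszul sign right. In the super setting, $ad^\ast$ and $L^\ast$ carry parity-dependent signs (as in the definitions $\langle \psi^\ast(x)\xi,v\rangle = -(-1)^{|x||\xi|}\langle\xi,\psi(x)v\rangle$), the twist $\tau$ and cyclic map $\xi$ introduce further signs, and the pairing $B_p$ identifies $\mathcal{P}^\ast\otimes\mathcal{P}^\ast$ with a subspace of $(\mathcal{P}\otimes\mathcal{P})^\ast$ only up to a sign convention that must be fixed once and used consistently. I expect that each individual equivalence ($\eqref{Hom 1}\Leftrightarrow$ dual of \eqref{atttt}, etc.) is a half-page sign-tracking computation, and the real care is needed in checking that the \emph{same} sign conventions make all four work simultaneously and reproduce \emph{both} the conditions on $\mathcal{P}$ \emph{and} those on $\mathcal{P}^\ast$ (the latter being obtained by the symmetry $\mathcal{P}\leftrightarrow\mathcal{P}^\ast$, $\delta\leftrightarrow\{,\}_2$, $\Delta\leftrightarrow\bullet_2$). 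Beyond that, the logical skeleton is routine: assemble the per-equation equivalences, note that $(1)$ for $\mathcal{P}$ and for $\mathcal{P}^\ast$ together are exactly all the matched-pair axioms of $(2)$, and close the loop to $(3)$ by the previously established Manin-triple/matched-pair theorem.
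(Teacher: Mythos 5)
Your proposal is correct and follows essentially the same route as the paper: the authors likewise dispose of $(2)\Leftrightarrow(3)$ by the preceding Manin-triple theorem, invoke the known Lie-superbialgebra/matched-pair and infinitesimal-superbialgebra/matched-pair correspondences for the two separate operations, and then reduce everything to showing that Eqs.~(\ref{Hom 1})--(\ref{Hom 4}) dualize to Eqs.~(\ref{atttt}) and (\ref{aaattt}) by pairing against an arbitrary element of $\mathcal{P}^{\ast}$ and tracking the Koszul signs (they carry out one representative case explicitly and declare the others similar). Your outline matches this structure, including the identification of sign bookkeeping as the only real difficulty.
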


\begin{proof} We only need to prove that the fact $(1)$ holds if and only if the fact $(2)$ holds. In fact, it is known that $(\mathcal{P}, \{,\}, \delta)$ is a Lie superbialgebra if and only if $(\mathcal{P}, \mathcal{P}^{\ast}, ad^{\ast}_{\{,\}_1}, ad^{\ast}_{\{,\}_2})$ is a matched
pair of Lie superalgebra \cite{newapproach-Bai} and $(\mathcal{P}, \bullet, \Delta)$ is a commutative and cocommutative infinitesimal superbialgebra if and only if $(\mathcal{P}, \mathcal{P}^{\ast}, -L^{\ast}_{\bullet_1}, -L^{\ast}_{\bullet_2})$ is a matched pair of commutative associative superalgebras. Then by Theorem \ref{Theoreme}, we only need to prove Eqs. (\ref{Hom 1})$-$(\ref{Hom 4}) are equivalent to Eqs. (\ref{atttt}) and (\ref{aaattt}) in the case that $\rho_{\{,\}_i}=ad^{\ast}_{\{,\}_i}$, $\varphi_{\bullet_i}=-L^{\ast}_{\bullet_i}$ $(i=1, 2)$. As an example, we give an explicit proof of the case that (for any $x, y\in\mathcal{P}$, $a\in\mathcal{P}^{\ast}$)
\begin{align*}-ad^{\ast}_{\{,\}_2}(a)(x\bullet_1y)+(ad^{\ast}_{\{,\}_2}(a)x)\bullet_1y
+(-1)^{{|a|}{|x|}}x\bullet_1(ad^{\ast}_{\{,\}_2}(a)y)\\
+(-1)^{{|a|}{|x|}}L^{\ast}_{\bullet_2}(ad^{\ast}_{\{,\}_1}(x)a)y
+(-1)^{{{|a|}{|y|}}+{{|x|}{|y|}}}L^{\ast}_{\bullet_2}(ad^{\ast}_{\{,\}_1}(y)a)x=0\end{align*}
is equivalent to Eq. (\ref{atttt}). The proof of other cases is similar. In fact, let the left hand side of the above equation acts on an arbitrary element $b\in\mathcal{P}^{\ast}$, we have
\begin{align*}
0&=\langle-ad^{\ast}_{\{,\}_2}(a)(x\bullet_1y)+(ad^{\ast}_{\{,\}_2}(a)x)\bullet_1y
+(-1)^{{|a|}{|x|}}x\bullet_1(ad^{\ast}_{\{,\}_2}(a)y)
+(-1)^{{|a|}{|x|}}L^{\ast}_{\bullet_2}(ad^{\ast}_{\{,\}_1}(x)a)y\\
&+(-1)^{{{|a|}{|y|}}+{{|x|}{|y|}}}L^{\ast}_{\bullet_2}(ad^{\ast}_{\{,\}_1}(y)a)x, b\rangle\\
&=-(-1)^{{{|a|}{|b|}}+{{|a|}{|x|}}+{{|a|}{|y|}}}\langle x\bullet_1y, {\{b, a\}_2}\rangle+(-1)^{{|a|}{|x|}}\langle x, {\{a, L^{\ast}_{\bullet_1}(y)b\}_2}\rangle+(-1)^{{{|x|}{|y|}}+{{|a|}{|y|}}} \langle y, {\{a, L^{\ast}_{\bullet_1}(x)b\}_2}\rangle\\
&-(-1)^{{{|a|}{|b|}}+{{|a|}{|x|}}+{{|a|}{|y|}}+{{|x|}{|b|}}+{{|x|}{|y|}}}\langle y, b\bullet_2 ad^{\ast}_{\{,\}_1}(x)a\rangle-(-1)^{{{|a|}{|y|}}+{{|a|}{|x|}}}\langle x, ad^{\ast}_{\{,\}_1}(y)a\bullet_2 b\rangle\\
&=(-1)^{{{|a|}{|b|}}+{{|a|}{|x|}}+{{|a|}{|y|}}}\langle -\delta(x\bullet_1y)+(-1)^{{{|x|}{|y|}}}(L_{\bullet_1}(y)\otimes id)\delta(x)
+(L_{\bullet_1}(x)\otimes id)\delta(y)\\ & +(-1)^{{{|x|}{|b|}}}(id\otimes ad_{\{,\}_1}(x))\Delta(y)
+(-1)^{{{|y|}{|b|}}+{{|x|}{|y|}}}(id\otimes ad_{\{,\}_1}(y))\Delta(x), b\otimes a\rangle.
\end{align*}
Then the conclusion follows.
\end{proof}

Furthermore, it is easy to show that two Manin triples of Poisson superalgebras are isomorphic if and only if their corresponding Poisson superbialgebras are isomorphic.

\section{Coboundary Poisson superbialgebras}

We recall first some relevant results of  coboundary Lie superbialgebras and  coboundary infinitesimal superbialgebras. For example, they lead to the famous classical Yang-Baxter equation (CYBE) \cite{V. Chari}, \cite{V. G. Drinfeldd} and associative Yang-Baxter equation (AYBE) \cite{M. Aguiarrr}, respectively.

\begin{df}
Denote by  ${U}(\mathcal{A})$, the universal enveloping algebra of a superspace $\mathcal{A}$. If $r=\sum r_{1}\otimes r_{2}$ $\in\mathcal{A}\otimes\mathcal{A}$, then
(here we also use $id$ to denote the identity element of ${U}(\mathcal{A})$)
\begin{eqnarray}\label{10}
  &&
  r_{12}=\sum r_{1}\otimes r_{2}\otimes id=r\otimes id,
   \\ \nonumber
  &&
 r_{13}=\sum r_{1}\otimes id \otimes r_{2}=(id \otimes\tau)(r\otimes id)=(\tau\otimes id)(id\otimes r),\\ \nonumber
  &&
  r_{23}=\sum id\otimes r_{1}\otimes r_{2}=id\otimes r
 \end{eqnarray}



are elements of ${U}(\mathcal{A})\otimes{U}(\mathcal{A})\otimes{U}(\mathcal{A})$,
\end{df}

\begin{df}
 The classical Yang-Baxter equation (CYBE) in a Lie superalgebra $(\mathcal{A}, [\cdot,\cdot])$ is
\begin{eqnarray}\label{1000}
C(r)=[r_{12}, r_{13}]+[r_{12}, r_{23}]+[r_{13}, r_{23}]=0,
\end{eqnarray}
for $r\in\mathcal{A}\otimes\mathcal{A}$. The three brackets in (\ref{1000}) are defined as \\
\begin{eqnarray}\label{cvc}
  &&
   [r_{12},r'_{13}]=\sum (-1)^{{| r'_{1}|}{|r_{2}|}} [r_{1}, r'_{1}]\otimes r_{2}\otimes r'_{2},
   \\ \nonumber
  &&
 [r_{12},r'_{23}]=\sum r_{1}\otimes[r_{2},r'_{1}]\otimes r'_{2},   \\ \nonumber
  &&
  [r_{13},r'_{23}]=\sum  (-1)^{{|r'_{1}|}{| r_{2}|}} r_{1} \otimes r'_{1} \otimes [r_{2}, r'_{2}],
 \end{eqnarray}
 where $r=\sum r_{1}\otimes r_{2}$ and $r'=\sum r'_{1}\otimes r'_{2}$ $\in\mathcal{A}\otimes\mathcal{A}$.
\end{df}

\begin{df} The associative Yang-Baxter equation (AYBE) $($it is also called AYB in Ref. \cite{M. Aguiar 2}$)$ in a associative superalgebra $(\mathcal{A}, \bullet)$ is
\begin{eqnarray}\label{Baxter Hom-associative}
A(r)=r_{13}\bullet r_{12}-r_{12}\bullet r_{23}+r_{23}\bullet r_{13}=0, \ \ for \ r\in\mathcal{A}\otimes\mathcal{A}.
\end{eqnarray}
The three elements in (\ref{Baxter Hom-associative}) are defined as \\
\begin{eqnarray}\label{cvc 2}
  &&
  r_{13}\bullet r'_{12}=\sum r_{1}\bullet r'_{1}\otimes r'_{2}\otimes r_{2},
   \\ \nonumber
  &&
  r_{12}\bullet r'_{23}=\sum r_{1}\otimes r_{2}\bullet r'_{1}\otimes r'_{2},   \\ \nonumber
  &&
  r_{23}\bullet r'_{13}=\sum  r'_{1} \otimes r_{1} \otimes r_{2}\bullet r'_{2},
 \end{eqnarray}
 where $r=\sum r_{1}\otimes r_{2}$ and $r'=\sum r'_{1}\otimes r'_{2}$ $\in\mathcal{A}\otimes\mathcal{A}$.

\end{df}

\begin{df} A coboundary Lie superbialgebra $(\mathcal{A}, [\cdot,\cdot], \delta, r)$ consists of a Lie superbialgebra $(\mathcal{A}, [\cdot,\cdot], \delta)$ and an element $r=\sum r_{1}\otimes r_{2}\in\mathcal{A}\otimes\mathcal{A}$
such that for all $x \in \mathcal{A}$
\begin{eqnarray}\label{cobord 1}
\ \ \delta(x)=(ad(x)\otimes id +(-1)^{{|x|}{|r_{1}|}}id\otimes ad(x))r=[x, r_{1}]\otimes r_{2}+ (-1)^{{|x|}{|r_{1}|}}r_{1}\otimes[x, r_{2}],
\end{eqnarray}
\end{df}

\begin{df} A coboundary infinitesimal superbialgebra  $(\mathcal{A}, \bullet, \Delta, r)$ consists of an infinitesimal superbialgebra $(\mathcal{A}, \bullet, \Delta)$ and an element $r=\sum r_{1}\otimes r_{2}\in\mathcal{A}\otimes\mathcal{A}$
such that $|r_{1}|=|r_{2}|$ and  for all $ a \in \mathcal{A}$
\begin{eqnarray}\label{cobord 2}
\Delta(a)=(L_{\bullet}(a)\otimes id-(-1)^{{|a|}{|r_{1}|}}id \otimes R_{\bullet}(a))r=a\bullet r_{1}\otimes  r_{2}-r_{1}\otimes r_{2}\bullet a.
\end{eqnarray}
\end{df}

\begin{rem}
\begin{equation}\label{super cobordd}
\delta(x)=(-1)^{{|x|}{|r|}}(\sum [x, r_{1}]\otimes r_{2}+(-1)^{{|x|}{| r_{1}|}} r_{1}\otimes[x, r_{2}])
\end{equation}
for $x\in\mathcal{A}$, where the parity $|r|$ of $r$ is defined as follows : since we assume $r$ is homogenous, there exists $|r|\in\mathbb{Z}_{2}$, such that $r$ can be written as $r=\sum r_{1}\otimes r_{2}\in\mathcal{A}^{\otimes2}$, $r_{1}, r_{2}$ are homogenous elements with $|r|=|r_{1}|+|r_{2}|$. (Note that equations (\ref{super cobordd}) and (\ref{cocrochet}) show that we  have $|r|=\bar{0}$,  namely $|r_{1}|=|r_{2}|$). So we get (\ref{cobord 1}).
\end{rem}

Let $(\mathcal{A}, [\cdot,\cdot])$ be a Lie superalgebra and $r\in\mathcal{A}\otimes\mathcal{A}$. The linear map $\delta$ defined by Eq. (\ref{cobord 1}) and $|r|=\bar{0}$ makes $(\mathcal{A}, \delta)$ into a Lie supercoalgebra if and only if the following conditions are satisfied (for any $x\in\mathcal{A}$):
\begin{enumerate}
\item $\textbf{(}ad(x)\otimes id + (-1)^{{|x|}{|r_{1}|}} id \otimes ad(x)\textbf{)}(r+\tau(r))=0$  $\Longleftrightarrow$ $\delta(x)+\tau(\delta(x))=0,$
\item $ad(x)\textbf{(}[r_{12}, r_{13}]+[r_{12}, r_{23}]+[r_{13}, r_{23}]\textbf{)}=0.$ \\
\end{enumerate}

Let $(\mathcal{A}, \bullet)$ be a commutative associative superalgebra and $r\in\mathcal{A}\otimes\mathcal{A}$. The linear map $\Delta$ defined by Eq. (\ref{cobord 2}) and $|r|=\bar{0}$ makes $(\mathcal{A}, \Delta)$ into a cocommutative and coassociative supercoalgebra if and only if the following conditions are satisfied (for any $x\in\mathcal{A}$):
\begin{enumerate}
\item $(L_{\bullet}(x)\otimes id-(-1)^{{|x|}{|r_{1}|}}id\otimes L_{\bullet}(x))(r+\tau(r))=0$ $\Longleftrightarrow$ $\Delta(x)-\tau(\Delta(x))=0,$
\item $(L_{\bullet}(x)\otimes id\otimes id-id\otimes id\otimes T_{\bullet}(x))(r_{13}\bullet r_{12}-r_{12}\bullet r_{23}+r_{23}\bullet r_{13})=0,$
\end{enumerate}
where $T_{\bullet}(x)y=y\bullet x.$\\
\ \ \ \ Next we introduce the notion of coboundary Poisson superbialgebra.\\

\begin{df} A coboundary Poisson superbialgebra $(\mathcal{A}, \{,\}, \bullet, \delta, \Delta, r)$ consists of a Poisson superbialgebra $(\mathcal{A}, \{,\}, \bullet, \delta, \Delta)$ and an element $r=\sum r_{1}\otimes r_{2}\in\mathcal{A}\otimes\mathcal{A}$ and $|r|=\bar{0}$
such that
\begin{eqnarray}\label{Poisson 01}
\delta(x)=(ad_{\{,\}}(x)\otimes id +
(-1)^{{|x|}{|r_{1}|}}id\otimes ad_{\{,\}}(x))r,
\end{eqnarray}
\begin{eqnarray}\label{Poisson 02}
\ \ \ \ \ \ \ \ \ \ \ \ \ \ \ \ \ \ \ \ \ \ \  \Delta(x)=(L_{\bullet}(x)\otimes id-(-1)^{{|x|}{|r_{1}|}}id\otimes L_{\bullet}(x))r, \ \ \ \ \ \ \ \ for \ all \ x \ \in \mathcal{A}.
\end{eqnarray}

\end{df}
Obviously, a coboundary Poisson superbialgebra $(\mathcal{A}, \{,\}, \bullet, \delta, \Delta, r)$ is equivalent to the fact that both $(\mathcal{A}, \{,\}, \delta, r)$ (as a Lie superbialgebra) and $(\mathcal{A}, \bullet, \Delta, r)$ (as an infinitesimal superbialgebra) are coboundary.

Now we consider  when $(\mathcal{A}, \delta, \Delta, \alpha)$ becomes a Poisson supercoalgebra, where $\delta$ and $\Delta$ are defined by
Eqs. (\ref{Poisson 01}) and (\ref{Poisson 02}) for some $r\in\mathcal{A}\otimes\mathcal{A}$, respectively. Let $\mathcal{A}$ be a $\mathbb{K}$-vector space equipped with two comultiplications $\delta, \ \Delta: \mathcal{A}\rightarrow \mathcal{A}\otimes\mathcal{A}$. Then $(\mathcal{A}, \delta, \Delta)$ becomes a Poisson supercoalgebra for $\delta$ corresponding to the Lie cobracket and $\Delta$ corresponding to the coproduct of the cocommutative supercoalgebra if and only if $(\mathcal{A}, \delta)$ is a Lie supercoalgebra and  $(\mathcal{A}, \Delta)$ is a cocommutative supercoalgebra and
\begin{eqnarray}
W(x)=(id\otimes\Delta)\delta(x)-(\delta\otimes id)\Delta(x)-(\tau\otimes id)(id\otimes\delta)\Delta(x)=0, \ \ \ \ \ \ for \ all \ x \ \in \mathcal{A}.
\end{eqnarray}

Let $(\mathcal{A}, \{,\}, \bullet)$ be a Poisson superalgebra. Define two comultiplications $\delta, \ \Delta: \mathcal{A}\rightarrow \mathcal{A}\otimes\mathcal{A}$ by Eqs. (\ref{Poisson 01}) and (\ref{Poisson 02}) with $r=\sum r_{1}\otimes r_{2}\in\mathcal{A}\otimes\mathcal{A}$ and $|r|=\bar{0}$, respectively. Then in this case (for any $x\in\mathcal{A}$)

\begin{align}\label{w(x)}
W(x)&=-(ad_{\{,\}}(x)\otimes id\otimes id)A(r)
+(-1)^{{|x|}{|r_{1}|}}(id\otimes L_{\bullet}(x)\otimes id-id\otimes id\otimes L_{\bullet}(x))C(r)\\&
-\sum \textbf{(}(-1)^{{|x|}{|r_{1}|}}(ad_{\{,\}}(r_{1})\otimes id)(L_{\bullet}(x)\otimes id-id\otimes L_{\bullet}(x))(r+\tau(r))\textbf{)}\otimes r_{2}.
\nonumber\end{align}

In fact, it follows from
\begin{align*}
W(x)&=\sum \{x, r'_{1}\}\otimes r'_{2}\bullet r_{1}\otimes r_{2}-(-1)^{{|r_{1}|}{|r_{2}|}}\{x, r'_{1}\}\otimes r_{1}\otimes r'_{2}\bullet r_{2}-\{x\bullet r'_{1}, r_{1}\}\otimes r_{2}\otimes r'_{2}\\
&+ \{x\bullet r'_{2}, r_{1}\}\otimes r'_{1}\otimes r_{2}+(-1)^{{|x|}{|r_{1}|}}r'_{1}\otimes\{x, r'_{2}\}\bullet r_{1}\otimes r_{2}-(-1)^{{|x|}{|r_{1}|}+{|r_{1}|}{|r_{2}|}}r_{1}\otimes\{x\bullet r'_{1}, r_{2}\}\otimes r'_{2}\\
&-\{r'_{2}, r_{1}\}\otimes x\bullet r'_{1}\otimes r_{2}-(-1)^{{|x|}{|r_{1}|}}r_{1}\otimes x\bullet r'_{1}\otimes \{r'_{2}, r_{2}\}-(-1)^{{|r_{1}|}{|r_{2}|}}r'_{1}\otimes r_{1}\otimes\{x, r'_{2}\}\bullet r_{2}\\
&+(-1)^{{|x|}{|r_{1}|}}\{r'_{1}, r_{1}\}\otimes r_{2}\otimes x\bullet r'_{2}+(-1)^{{|x|}{|r_{1}|}+{|r_{1}|}{|r_{2}|}}r_{1}\otimes\{r'_{1}, r_{2}\}\otimes x\bullet r'_{2}+r_{1}\otimes r'_{1}\otimes\{x\bullet r'_{2}, r_{2}\},
\end{align*}

in which the sum of the first four terms is $$-(ad_{\{,\}}(x)\otimes id\otimes id)A(r)-\sum \textbf{(}((-1)^{{|x|}{|r_{1}|}}ad_{\{,\}}(r_{1})\otimes id)(L_{\bullet}(x)\otimes id)(r+\tau(r))\textbf{)}\otimes r_{2},$$
the sum of 5th to 8th is $$(-1)^{{|x|}{|r_{1}|}}(id\otimes L_{\bullet}(x)\otimes id)C(r)+\sum \textbf{(}(-1)^{{|x|}{|r_{1}|}}(ad_{\{,\}}(r_{1})\otimes id)(id\otimes L_{\bullet}(x))(r+\tau(r))\textbf{)}\otimes r_{2}$$ and the sum of the last four terms is $-(-1)^{{|x|}{|r_{1}|}}(id\otimes id\otimes L_{\bullet}(x))C(r)$. Therefore the following conclusion follows:\\
\begin{thm}\label{cobod Poisson} Let $(\mathcal{A}, \{,\}, \bullet)$ be a Poisson superalgebra and $r\in\mathcal{A}\otimes\mathcal{A}$ such that $|r|=\bar{0}$. Then the comultiplications $\delta$ and $\Delta$ defined by Eqs. (\ref{Poisson 01}) and (\ref{Poisson 02}), respectively, make $(\mathcal{A}, \delta, \Delta)$ into a Poisson supercoalgebra such that $(\mathcal{A}, \{,\}, \bullet, \delta, \Delta, \alpha)$ is a Poisson superbialgebra if and only if the following conditions are satisfied (for any $x\in\mathcal{A}$):
\begin{enumerate}
\item $(ad_{\{,\}}(x)\otimes id+(-1)^{{|x|}{|r_{1}|}}id\otimes ad_{\{,\}}(x))(r+\tau(r))=(L_{\bullet}(x)\otimes id-(-1)^{{|x|}{|r_{1}|}}id\otimes L_{\bullet}(x))(r+\tau(r))=0,$
\item $(L_{\bullet}(x)\otimes id\otimes id-id\otimes id\otimes T_{\bullet}(x))A(r)=0,$
\item $ad(x)(C(r))=0,$
\item $W(x)=0,$ where $W(x)$ is given by Eq. (\ref{w(x)}) and $r=\sum r_{1}\otimes r_{2}$.
\end{enumerate}
\end{thm}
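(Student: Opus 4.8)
The overall strategy is to unfold "$(\mathcal{A},\{,\},\bullet,\delta,\Delta)$ is a Poisson superbialgebra" into its four defining requirements and to match each of them against the coboundary forms (\ref{Poisson 01})--(\ref{Poisson 02}); most of the computational work has in fact already been prepared in the passages immediately preceding the theorem. By the definition of a Poisson superbialgebra, $(\mathcal{A},\{,\},\bullet,\delta,\Delta)$ is one precisely when (i) $(\mathcal{A},\{,\},\delta)$ is a Lie superbialgebra, (ii) $(\mathcal{A},\bullet,\Delta)$ is a commutative and cocommutative infinitesimal superbialgebra, (iii) $(\mathcal{A},\delta,\Delta)$ is a Poisson supercoalgebra, and (iv) the compatibilities (\ref{atttt}) and (\ref{aaattt}) hold. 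I would show that the conjunction (i)--(iv) is equivalent to the conjunction of conditions (1)--(4).

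For (i): since $\delta$ has the coboundary form (\ref{cobord 1}) (which is (\ref{Poisson 01})), the $1$-cocycle compatibility (\ref{a}) holds automatically (the super-analogue of Drinfeld's observation), so (i) is equivalent to $(\mathcal{A},\delta)$ being a Lie supercoalgebra; by the criterion recalled before the theorem this holds if and only if the first equality in condition (1) (skew-supersymmetry of $\delta$) and condition (3) ($ad(x)(C(r))=0$, i.e. co-Jacobi) are satisfied. Dually, for (ii): since $\Delta$ has the coboundary form (\ref{cobord 2}) (which is (\ref{Poisson 02})), compatibility (\ref{aee}) holds automatically, so (ii) is equivalent to $(\mathcal{A},\Delta)$ being a cocommutative and coassociative supercoalgebra, which by the corresponding recalled criterion holds if and only if the second equality in condition (1) (cocommutativity of $\Delta$) and condition (2) (the $A(r)$-identity, i.e. coassociativity) are satisfied.

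For (iii): once (i) and (ii) are in force, $(\mathcal{A},\delta,\Delta)$ is a Poisson supercoalgebra if and only if (\ref{aa}) holds, i.e. $W(x)=0$ for every $x\in\mathcal{A}$. Substituting the coboundary expressions for $\delta$ and $\Delta$ and repeatedly applying the Leibniz identity (\ref{awww}) (in both of its equivalent forms) together with associativity and skew-supersymmetry, $W(x)$ reduces to the right-hand side of (\ref{w(x)}); this is exactly the eleven-term computation carried out just before the theorem, in which the summands regroup into the four $A(r)$-terms, the four $C(r)$-terms, and the residual $(r+\tau(r))$-terms. Hence (iii) is precisely condition (4).

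Finally, for (iv) — which I expect to be the main obstacle — one must check that (\ref{atttt}) and (\ref{aaattt}) are satisfied \emph{automatically} once $\delta$, $\Delta$ are given by the coboundary formulas, with no further constraint on $r$. The approach is a direct term-chase: writing $\delta(x\bullet y)=\{x\bullet y,r_{1}\}\otimes r_{2}+(-1)^{{|x\bullet y|}{|r_{1}|}}r_{1}\otimes\{x\bullet y,r_{2}\}$ and expanding $\{x\bullet y,\cdot\}$ via (\ref{awww}), the terms carrying $\bullet$ in the first tensor slot reproduce $(-1)^{{|x|}{|y|}}(L_{\bullet}(y)\otimes id)\delta(x)+(L_{\bullet}(x)\otimes id)\delta(y)$, the terms carrying $\{,\}$ in the second slot reproduce $(-1)^{{|x|}{|b|}}(id\otimes ad_{\{,\}}(x))\Delta(y)+(-1)^{{|y|}{|b|}+{|x|}{|y|}}(id\otimes ad_{\{,\}}(y))\Delta(x)$ after inserting (\ref{cobord 2}), and the remaining cross terms cancel by a second use of (\ref{awww}); identity (\ref{aaattt}) is treated symmetrically, starting from $\Delta(\{x,y\})=\{x,y\}\bullet r_{1}\otimes r_{2}-r_{1}\otimes r_{2}\bullet\{x,y\}$. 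The delicate point throughout is the bookkeeping of the $\mathbb{Z}_{2}$-signs, including those produced by the dualizing pairings in (\ref{atttt})--(\ref{aaattt}), but no new hypothesis on $r$ is generated. Assembling the four parts then yields the stated equivalence.
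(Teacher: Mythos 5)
Your proposal is correct and follows essentially the same route as the paper: the paper's (implicit) proof is exactly the assembly of the two supercoalgebra criteria recalled before the theorem, the displayed computation of $W(x)$, and the tacit fact that the cocycle-type compatibilities (\ref{a}), (\ref{aee}), (\ref{atttt}), (\ref{aaattt}) hold automatically for the coboundary formulas. Your explicit attention to the last point (the term-chase for (\ref{atttt}) and (\ref{aaattt})) fills in a step the paper leaves unstated, but it is the same argument, not a different one.
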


Let $(\mathcal{A}, \{,\}, \bullet)$ be a Poisson superalgebra and $r\in\mathcal{A}\otimes\mathcal{A}$ such that $|r|=\bar{0}$. We say $r$ satisfies Poisson Yang-Baxter equation (PYBE) if $r$ satisfies both CYBE and AYBE. Therefore a direct consequence is that if $r$ is a skew-supersymmetric solution of PYBE in a Poisson superalgebra $(\mathcal{A}, \{,\}, \bullet)$, then the comultiplications $\delta$ and $\Delta$ defined by Eqs. (\ref{Poisson 01}) and (\ref{Poisson 02}), respectively, make $(\mathcal{A}, \{,\}, \bullet, \delta, \Delta)$ into a Poisson superbialgebra.\\

Another important consequence of Theorem \ref{cobod Poisson} is the following Poisson superalgebra analogue of the Drinfeld double construction.
\begin{thm}\label{fin de cobord} Let $(\mathcal{A}, \{,\}_{1}, \bullet_{1}, \delta, \Delta)$ be a Poisson superbialgebra. Then there is a canonical coboundary Poisson superbialgebra structure on $\mathcal{A}\oplus\mathcal{A}^{\ast}$.

\end{thm}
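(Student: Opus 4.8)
The plan is to mirror Drinfeld's double construction. First I would dualize the cooperations of $\mathcal{A}$: since $(\mathcal{A},\delta,\Delta)$ is a Poisson supercoalgebra, the transposes $\{,\}_{2}:=\delta^{\ast}$ and $\bullet_{2}:=\Delta^{\ast}$ endow $\mathcal{A}^{\ast}$ with a Poisson superalgebra structure, and the compatibility conditions (\ref{atttt})--(\ref{aaattt}) that make $(\mathcal{A},\{,\}_{1},\bullet_{1},\delta,\Delta)$ a Poisson superbialgebra are, by the equivalence theorem of Section~4, exactly the statement that $(\mathcal{A},\mathcal{A}^{\ast},ad^{\ast}_{\{,\}_{1}},-L^{\ast}_{\bullet_{1}},ad^{\ast}_{\{,\}_{2}},-L^{\ast}_{\bullet_{2}})$ is a matched pair of Poisson superalgebras. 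Theorem~\ref{Theoreme} then produces a Poisson superalgebra $(\mathcal{D},\{,\}_{\mathcal{D}},\bullet_{\mathcal{D}})$ on $\mathcal{D}:=\mathcal{A}\oplus\mathcal{A}^{\ast}$, for which $(\mathcal{D},\mathcal{A},\mathcal{A}^{\ast})$ is a standard Manin triple of Poisson superalgebras relative to the nondegenerate supersymmetric invariant bilinear form $B_{p}$ of (\ref{Manin}).

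Next I would take as $r$-matrix the canonical element. Fixing a homogeneous basis $(e_{i})$ of $\mathcal{A}$ with dual basis $(e^{i})$ of $\mathcal{A}^{\ast}$, set
\[
r=\sum_{i}e_{i}\otimes e^{i}\in\mathcal{A}\otimes\mathcal{A}^{\ast}\subset\mathcal{D}\otimes\mathcal{D},\qquad |r|=\bar{0},
\]
and let $\delta_{\mathcal{D}},\Delta_{\mathcal{D}}$ be the cooperations on $\mathcal{D}$ defined by the coboundary formulas (\ref{Poisson 01}) and (\ref{Poisson 02}) with this $r$. The claim to prove is that $(\mathcal{D},\{,\}_{\mathcal{D}},\bullet_{\mathcal{D}},\delta_{\mathcal{D}},\Delta_{\mathcal{D}},r)$ is a coboundary Poisson superbialgebra, which by Theorem~\ref{cobod Poisson} amounts to checking its four Conditions. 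Condition~(1) asserts that $r+\tau(r)$ is invariant under both $\{,\}_{\mathcal{D}}$ and $\bullet_{\mathcal{D}}$; since $r+\tau(r)$ is the Casimir element of the invariant form $B_{p}$, this holds (equivalently, $\delta_{\mathcal{D}}$ is skew-supersymmetric and $\Delta_{\mathcal{D}}$ is cocommutative, which reflects the isotropy of $\mathcal{A}$ and $\mathcal{A}^{\ast}$ inside $\mathcal{D}$).

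Conditions~(2)--(4) then reduce to the single assertion that $r$ solves the Poisson Yang--Baxter equation in $\mathcal{D}$, i.e.\ $C(r)=0$ and $A(r)=0$. Indeed, Conditions~(2) and (3) are immediate from these, and inserting $A(r)=C(r)=0$ together with Condition~(1) into the displayed formula (\ref{w(x)}) for $W(x)$ gives $W(x)=0$, which is Condition~(4). The equation $C(r)=0$ is the super-analogue of the classical fact that the canonical element of the Drinfeld double of a Lie superbialgebra solves the CYBE, applied here to the underlying Lie superbialgebra $(\mathcal{A},\{,\}_{1},\delta)$ with the bracket $\{,\}_{\mathcal{D}}$ of the Manin triple; likewise $A(r)=0$ is the super-analogue of the corresponding statement for the double of a commutative cocommutative infinitesimal superbialgebra, applied to $(\mathcal{A},\bullet_{1},\Delta)$ with $\bullet_{\mathcal{D}}$ (cf.\ \cite{newapproach-Bai,V. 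Chari} and \cite{M. Aguiar,V. N. Zhelyabin}). Alternatively, one can expand $C(r)$ and $A(r)$ into their $\mathcal{A}$- and $\mathcal{A}^{\ast}$-components and verify the vanishing directly from the matched-pair relations (\ref{Hom 1})--(\ref{Hom 4}), (\ref{ETE}), (\ref{super POI 1})--(\ref{super POI 2}) and the definitions of $ad^{\ast}$, $L^{\ast}$ and $B_{p}$.

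The main obstacle is precisely this sign-careful verification of $C(r)=0$ and $A(r)=0$ (equivalently of $W(x)=0$) in the $\mathbb{Z}_{2}$-graded setting; everything else is bookkeeping with the machinery of Sections~3--4. The resulting structure is \emph{canonical}: $r+\tau(r)$ and $\delta_{\mathcal{D}},\Delta_{\mathcal{D}}$ do not depend on the choice of dual bases, and forgetting the associative (resp.\ Lie) part recovers the usual Drinfeld double of the Lie superbialgebra $(\mathcal{A},\{,\}_{1},\delta)$ (resp.\ the double of the infinitesimal superbialgebra $(\mathcal{A},\bullet_{1},\Delta)$).
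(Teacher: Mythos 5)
Your proposal is correct and follows essentially the same route as the paper: form the double $\mathcal{A}\bowtie\mathcal{A}^{\ast}$ via the matched pair $(ad^{\ast}_{\{,\}_{i}},-L^{\ast}_{\bullet_{i}})$, take the canonical element $r=\sum_{i}e_{i}\otimes e^{\ast}_{i}$, and verify the hypotheses of Theorem \ref{cobod Poisson} (invariance of $r+\tau(r)$, CYBE, AYBE, and hence $W=0$). The paper likewise leaves the verification that $r$ solves the PYBE as ``straightforward,'' so your additional remarks on how Conditions (2)--(4) reduce to $C(r)=A(r)=0$ together with Condition (1) only make the same argument slightly more explicit.
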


\begin{proof} Let $r\in\mathcal{A}\otimes\mathcal{A}^{\ast}\subset(\mathcal{A}\oplus\mathcal{A}^{\ast})\otimes(\mathcal{A}\oplus\mathcal{A}^{\ast})$ correspond to the identity map $id:\mathcal{A}\rightarrow \mathcal{A}$. Let $\{e_{1},...,e_{n}\}$ be a basis of $\mathcal{A}$ and $\{e^{\ast}_{1},...,e^{\ast}_{n}\}$ be its dual basis. Then $r= \sum_{i} e_{i}\otimes e^{\ast}_{i}$ such that $|r|=\bar{0}$. Suppose that the Poisson superalgebra structure $(\{,\}, \bullet)$ on $\mathcal{A}\oplus\mathcal{A}^{\ast}$ is given by $\mathcal{PD}(\mathcal{A})=\mathcal{A}\bowtie_{{ad_{\{,\}_{1}}^{\ast}, -L^{\ast}_{\bullet_{1}}}}^{ad_{\{,\}_{2}}^{\ast}, -L^{\ast}_{\bullet_{2}}}\mathcal{A}^{\ast}$, where $(\{,\}_{_{2}}, \bullet_{2})$ is the Poisson superalgebra structure on $\mathcal{A}^{\ast}$ induced by $(\delta^{\ast}, \Delta^{\ast})$. Then we have, for any $x, y\in\mathcal{A}$, $a, b\in\mathcal{A}^{\ast}$, 
$$\{x, y\}=\{x, y\}_{1}, \ \  x\bullet y=x\bullet_{1} y,  \ \ \{a, b\}=\{a, b\}_{2}, \ \ a\bullet b=a\bullet_{2} b, \ \  \{x, a\}=-(-1)^{{|x|}{|a|}}\{a, x\}=ad_{\{,\}_{1}}^{\ast}(x)a$$ $-(-1)^{{|x|}{|a|}}ad_{\{,\}_{2}}^{\ast}(a)x,$ $x\bullet a=(-1)^{{|x|}{|a|}}a\bullet x=-L_{\bullet_{1}}^{\ast}(x)a-(-1)^{{|x|}{|a|}} L_{\bullet_{2}}^{\ast}(a)x$. It is straightforward to prove that $r$ satisfies CYBE and AYBE such that $|r|=\bar{0}$ and for any  $u\in\mathcal{PD}(\mathcal{A})$ $$(ad_{\{,\}}(u)\otimes id+(-1)^{{|u|}{|r_{1}|}}id\otimes ad_{\{,\}}(u))(r+\tau(r))
=(L_{\bullet}(u)\otimes id-(-1)^{{|u|}{|r_{1}|}}id\otimes L_{\bullet}(u))(r+\tau(r))=0.$$
So $r$ satisfies the conditions in Theorem \ref{cobod Poisson}. Thus, for any $u\in\mathcal{PD}(\mathcal{A})$
$$\delta_{\mathcal{PD}}(u)=(ad_{\{,\}}(u)\otimes id+(-1)^{{|u|}{|r_{1}|}}id\otimes ad(u))r, \ \ and \ \ \Delta_{\mathcal{PD}}(u)(L_{\bullet}(u)\otimes id
-(-1)^{{|u|}{|r_{1}|}}id\otimes L_{\bullet}(u))r,$$ induce a coboundary Poisson superbialgebra structure on $\mathcal{PD}(\mathcal{A})$.
\end{proof}

Let $(\mathcal{A}, \{,\}_{1}, \bullet_{1}, \delta_{1}, \Delta_{1})$ be a Poisson superbialgebra. With the Poisson superbialgebra structure given in
Theorem \ref{fin de cobord}, $\mathcal{A}\oplus\mathcal{A}^{\ast}$ is called the Drinfeld classical double of $\mathcal{A}$. As in the proof of Theorem \ref{fin de cobord}, we denote it by $\mathcal{PD}(\mathcal{A})$.

\section{$\mathcal{O}$-operators of Poisson superalgebras, Post-Poisson superalgebras,
and quasitriangular Poisson superbialgebras}

In this section we introduce the notions of $\mathcal{O}$-operator of weight $\lambda\in \mathbb{K}$ on a Poisson superalgebra, post-
Poisson superalgebra, and quasitriangular Poisson superbialgebras. We use $\mathcal{O}$-operators on Poisson superalgebras to
construct post-Poisson superalgebras. We show that a quasitriangular Poisson superbialgebra naturally gives a
post-Poisson superalgebra.

\begin{df} Let $(\mathcal{A},[\cdot,\cdot]_{\mathcal{A}})$ and $(\mathcal{A}',[\cdot,\cdot]_{\mathcal{A}'})$ be two Lie superalgebras. Suppose that $\omega$ is a Lie superalgebra homomorphism from $\mathcal{A}$ to $Der_{\mathbb{K}}(\mathcal{A}')$ the Lie superalgebra consisting of all the derivations of $\mathcal{A}'$. Then $(\mathcal{A}',[\cdot,\cdot]_{\mathcal{A}'}, \omega)$ is called a $\mathcal{A}$-Lie superalgebra.
\end{df}

\begin{df} Let $(\mathcal{A}, \bullet)$ and $(R, \cdot)$ be two commutative associative superalgebras. Let $\varphi:\mathcal{A}\longrightarrow End(R)$ be an even linear map. Then the triple $(R, \cdot, \varphi)$ is called an $\mathcal{A}$-module superalgebra if
\begin{eqnarray}
\varphi(x\bullet y)=\varphi(x)\varphi(y), \ \ \
\varphi(x)(u\cdot v)=(\varphi(x)u)\cdot v,
\end{eqnarray}
for $x, y \in\mathcal{A}$, $u, v\in R$.
\end{df}

\begin{df} Let $(\mathcal{P},\{,\},\bullet)$ and $(V,\{,\}_1,\bullet_1)$ be two Poisson superalgebras. Let $\psi_{\{,\}},\psi_\bullet:\mathcal{P}\longrightarrow End(R)$ be two even linear maps such that
\begin{enumerate}
\item $(V,\{,\}_1,\psi_{\{,\}})$ is a $\mathcal{P}$-Lie superalgebra, where $\mathcal{P}$ is seen as a Lie superalgebra with respect to the Lie bracket $\{,\}$.
\item $(V,\bullet_1,\psi_\bullet)$ is a $\mathcal{P}$-module superalgebra, where $\mathcal{P}$ is seen as a commutative associative superalgebra with respect to the commutative associative product $\bullet$.
\item $(\psi_{\{,\}},\psi_\bullet, V)$ is a module of $\mathcal{P}$.
\item The following equations hold:
\end{enumerate}
\begin{eqnarray}
\psi_{\{,\}}(x)(u\bullet_{1}v)= (\psi_{\{,\}}(x)u)\bullet_{1} v+(-1)^{{|x|}{|u|}}u\bullet_{1}(\psi_{\{,\}}(x)v),~~\forall~~x\in \mathcal{P},~~ u,v\in V,
\end{eqnarray}
\begin{eqnarray}
\psi_{\bullet}(x)\{u,v\}_1= (\psi_{\{,\}}(x)u)\bullet_{1}v+(-1)^{{|x|}{|u|}}\{u,\psi_{\bullet}(x)v\}_1, ~~\forall~~x\in \mathcal{P},~~ u,v\in V.
\end{eqnarray}

Then $(V,\{,\}_1,\bullet_1,\psi_{\{,\}},\psi_{\bullet})$ is called a $\mathcal{P}$-module Poisson superalgebra.
\end{df}

\begin{prop}
With above notations, $(V, \{,\}_1, \bullet_1, \psi_{\{,\}}, \psi_{\bullet})$ is a $\mathcal{P}$-module Poisson superalgebra if and only if the direct sum of vector spaces $\mathcal{P}\oplus V$ is turned to a Poisson superalgebra with  the operations defined as
\begin{eqnarray}
\{(x,u),(y,v)\} = (\{x,y\},\psi_{\{,\}}(x)v-(-1)^{{|x|}{|y|}}\psi_{\{,\}}(y)u+\{u,v\}_1),
\end{eqnarray}
\begin{eqnarray}
(x,u)\bullet(y,v)=(x\bullet y,\psi_{\bullet}(x)v+(-1)^{{|x|}{|y|}}\psi_{\bullet}(y)u+u\bullet_{1} v),
\end{eqnarray}
for any $x,y\in \mathcal{P},~~ u,v\in V$.
\end{prop}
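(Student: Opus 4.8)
The plan is to verify that the Poisson superalgebra axioms on $\mathcal{P}\oplus V$ decompose into exactly the conditions defining a $\mathcal{P}$-module Poisson superalgebra, using the same bookkeeping as in Proposition~\ref{S()YYY}. The sufficiency direction (if $(V,\{,\}_1,\bullet_1,\psi_{\{,\}},\psi_{\bullet})$ is a $\mathcal{P}$-module Poisson superalgebra, then $\mathcal{P}\oplus V$ is a Poisson superalgebra) is routine, so I would concentrate on necessity. First I would observe that $(\mathcal{P}\oplus V,\{,\})$ is a Lie superalgebra if and only if $\{,\}_1$ is a Lie bracket on $V$, $\psi_{\{,\}}$ is a representation of the Lie superalgebra $(\mathcal{P},\{,\})$, and the mixed Jacobi identities hold; by Theorem~1.1 (the matched-pair theorem with $\mathcal{A}'=V$, $\rho=\psi_{\{,\}}$, $\rho'=0$) this is exactly the statement that $(V,\{,\}_1,\psi_{\{,\}})$ is a $\mathcal{P}$-Lie superalgebra, i.e. that $\psi_{\{,\}}$ lands in $Der_{\mathbb{K}}(V)$ and is a Lie homomorphism. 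Similarly, $(\mathcal{P}\oplus V,\bullet)$ is a commutative associative superalgebra if and only if, by Theorem~2.2 (with $\varphi_2=0$), $(V,\bullet_1)$ is commutative associative, $\psi_{\bullet}$ is a representation, and $\psi_{\bullet}(x)(u\bullet_1 v)=(\psi_{\bullet}(x)u)\bullet_1 v$, which is precisely the $\mathcal{P}$-module superalgebra condition.

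Next I would expand the remaining Poisson compatibility axiom
\[
\{(x,u),(y,v)\bullet(z,w)\}=\{(x,u),(y,v)\}\bullet(z,w)+(-1)^{|x||y|}(y,v)\bullet\{(x,u),(z,w)\}
\]
on the direct sum, collecting terms by type. The pure-$\mathcal{P}$ component reproduces the Poisson compatibility in $\mathcal{P}$ and carries no information; the pure-$V$ component (all three arguments in $V$) reproduces the Poisson compatibility in $V$. The mixed components split according to which single argument lies in $V$: taking $u=v=0$, $w$ arbitrary gives the condition $\psi_{\{,\}}(x\bullet y)=\psi_{\bullet}(x)\psi_{\{,\}}(y)+(-1)^{|x||y|}\psi_{\bullet}(y)\psi_{\{,\}}(x)$, i.e.\ \eqref{super rep 1}; taking $v=w=0$, $u$ arbitrary (together with skew-symmetry) gives \eqref{super rep 2}; and the two cases with exactly one of $v,w$ nonzero give the two displayed equations
\[
\psi_{\{,\}}(x)(u\bullet_1 v)=(\psi_{\{,\}}(x)u)\bullet_1 v+(-1)^{|x||u|}u\bullet_1(\psi_{\{,\}}(x)v),\qquad
\psi_{\bullet}(x)\{u,v\}_1=(\psi_{\{,\}}(x)u)\bullet_1 v+(-1)^{|x||u|}\{u,\psi_{\bullet}(x)v\}_1.
\]
Conversely, assembling all of these hypotheses shows every instance of the compatibility axiom holds, so the equivalence follows.

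The main obstacle is purely computational sign-tracking: in the Poisson compatibility identity every term acquires a Koszul sign depending on the parities of $x,y,z$ (and, when the arguments are sums $(x,u)$, the fact that $|(x,u)|=|x|=|u|$), so one must be careful that $\{,\}$ is \emph{odd}-symmetric while $\bullet$ is \emph{even}-symmetric and that the representation axioms \eqref{super rep 1}--\eqref{super rep 2} themselves carry opposite relative signs. I would organize this by fixing $\mathbb{Z}_2$-homogeneous elements throughout and matching coefficients of each monomial $\psi_{\ast}(\cdot)\psi_{\ast}(\cdot)(\cdot)$ separately, exactly as in the displayed computation in the proof of Proposition~\ref{S()YYY}; no conceptually new ingredient is needed beyond Theorems~1.1 and~2.2 and Definition~2.5. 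Hence the proof reduces to a finite bookkeeping check, and I would simply record that "a direct verification, analogous to the proof of Proposition~\ref{S()YYY}, shows that the Poisson superalgebra axioms on $\mathcal{P}\oplus V$ are equivalent to conditions (1)--(4) in the definition of a $\mathcal{P}$-module Poisson superalgebra."
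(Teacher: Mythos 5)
Your proposal is correct and follows essentially the same route as the paper, whose proof consists of the single line ``Straightly from Proposition \ref{S()YYY}'' and leaves exactly the componentwise verification you describe implicit. (One small bookkeeping slip: the specialization $u=v=0$ with $w$ arbitrary in fact yields \eqref{super rep 2}, while $v=w=0$ with $u$ arbitrary yields \eqref{super rep 1}, not the other way around; since both identities are still recovered from the union of the mixed cases, this does not affect the argument.)
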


\begin{proof} Straightly from Proposition \ref{S()YYY}.
\end{proof}
If $(\mathcal{P},\{,\},\bullet)$ is a Poisson superalgebra, then it is easy to see that $(\mathcal{P},\{,\},\bullet,ad_{\{,\}},L_\bullet)$ is a $\mathcal{P}$-module Poisson superalgebra.

\begin{df} A (left) post-Lie superalgebra is a superspace $\mathcal{A}$ with two even bilinear operations
$([\cdot ,\cdot ], \diamond)$ satisfying
the following equations: 
\begin{eqnarray}
 [x,y]= -(-1)^{{|x|}{|y|}}[y,x],
\end{eqnarray}
\begin{eqnarray}
(-1)^{{|x|}{|z|}}[x,[y,z]]+(-1)^{{|z|}{|y|}}[z,[x,y]]
+(-1)^{{|y|}{|x|}}[y,[z,x]]=0,
\end{eqnarray}
\begin{eqnarray}
(-1)^{{|z|}{|y|}} z\diamond (y\diamond x)- y\diamond (z\diamond x)+ (y\diamond z)\diamond x - (-1)^{{|z|}{|y|}} (z \diamond y)\diamond x +[y,z]\diamond x = 0,
\end{eqnarray}
\begin{eqnarray}
z\diamond[x,y]-[z\diamond x,y]-(-1)^{{|z|}{|x|}}[x,z\diamond y]= 0,
\end{eqnarray}
for any elements $x,y,z$ in $\mathcal{A}$.
\end{df}

\begin{df} A (left) commutative dendriform supertrialgebra is a superspace $\mathcal{A}$ equipped with two even bilinear operations
$(\cdot, \succ):\mathcal{A}\otimes \mathcal{A}\longrightarrow \mathcal{A}$ satisfying
the following equations:
\begin{eqnarray}
x\cdot y= (-1)^{{|x|}{|y|}} y\cdot x,
\end{eqnarray}
\begin{eqnarray}
(x\cdot y)\cdot z = x\cdot (y\cdot z),
\end{eqnarray}
\begin{eqnarray}
x\succ (y\succ z)=\Big(x\succ y+(-1)^{{|x|}{|y|}} y\succ x+x\cdot y\Big)\succ z,
\end{eqnarray}
\begin{eqnarray}
(x\succ y)\cdot z=x\succ (y\cdot z),
\end{eqnarray}
 for any elements $x,y,z$ in $\mathcal{A}$.
\end{df}

\begin{df} A (left) post-Poisson superalgebra is a superspace $\mathcal{A}$ equipped with four even bilinear operations $([\cdot ,\cdot ], \diamond, \cdot, \succ)$ such that $(\mathcal{A}, [\cdot ,\cdot ], \diamond)$ is a (left) post-Lie superalgebra, $(\mathcal{A}, \cdot, \succ)$ is a (left)
commutative dendriform supertrialgebra, and they are compatible in the sense that, for any elements $x,y,z$ in $\mathcal{A}$, 
\begin{eqnarray}
[x, y \cdot z]=[x, y]\cdot z+ (-1)^{{|x|}{|y|}}y\cdot [x, z],
\end{eqnarray}
\begin{eqnarray}
[x, z\succ y]=(-1)^{{|x|}{|z|}}z\succ[x, y]-(-1)^{{|y|}{|z|}+{|x|}{|y|}+{|x|}{|z|}}y\cdot(z\diamond x),
\end{eqnarray}
\begin{eqnarray}
x\diamond(y\cdot z)=(x\diamond y)\cdot z+(-1)^{{|x|}{|y|}}y\cdot(x\diamond z),
\end{eqnarray}
\begin{eqnarray}
\Big(y\succ z+ (-1)^{{|y|}{|z|}}z\succ y+y\cdot z\Big)\diamond x=(-1)^{{|y|}{|z|}}z\succ(y\diamond x)+y\succ(z\diamond x),
\end{eqnarray}
\begin{eqnarray}
x\diamond(z\succ y)=(-1)^{{|x|}{|z|}}z\succ(x\diamond y)+\Big(x\diamond z-(-1)^{{|x|}{|z|}}z\diamond x+[x, z] \Big)\succ y.
\end{eqnarray}
\end{df}

\begin{thm} Let $(\mathcal{A}, [\cdot ,\cdot ], \diamond, \cdot, \succ)$ be a post-Poisson superalgebra. Define two new even bilinear map
$\{,\},\bullet:\mathcal{A}\otimes \mathcal{A}\longrightarrow \mathcal{A}$ on $\mathcal{A}$  by
\begin{eqnarray}
\ \ \ \ \ \ \ \ \ \ \{x,y\}= x\diamond y-(-1)^{{|x|}{|y|}}y\diamond x+[x,y], \ \ \ \ \
x \bullet y= x\succ y+(-1)^{{|x|}{|y|}}y\succ x+ x\cdot y, \ \ \ \forall x,y\in\mathcal{A}.
\end{eqnarray}
Then $(\mathcal{A}, \{,\}, \bullet)$ becomes a Poisson superalgebra.\\ It is called the associated Poisson superalgebra of $(\mathcal{A}, [\cdot ,\cdot ], \diamond, \cdot, \succ)$ and is denoted by $(P(\mathcal{A}), \{,\}, \bullet)$.\\ 
Moreover, $(\mathcal{A}, [\cdot ,\cdot ],\cdot, L_\diamond, L_\succ)$ is a $P$-module Poisson superalgebra of $(P(\mathcal{A}), \{,\}, \bullet)$.
\end{thm}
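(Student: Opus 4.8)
The statement divides into two parts: first that $(\mathcal{A},\{,\},\bullet)$ is a Poisson superalgebra --- which itself splits into the Lie superalgebra axioms for $\{,\}$, the commutative associative axioms for $\bullet$, and the Leibniz-type compatibility between them --- and then the $P$-module Poisson superalgebra claim. The first two pieces are the familiar ``sub-adjacent'' constructions: passing from a post-Lie superalgebra to its underlying Lie superalgebra, and from a commutative dendriform supertrialgebra to its underlying commutative associative superalgebra; the remaining pieces are where the five compatibility axioms of a post-Poisson superalgebra come in.

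For $(\mathcal{A},\{,\})$: skew-supersymmetry of $\{x,y\}=x\diamond y-(-1)^{|x||y|}y\diamond x+[x,y]$ is immediate from that of $[\cdot,\cdot]$. For the superJacobi identity I would expand the cyclic sum $(-1)^{|x||z|}\{x,\{y,z\}\}+(-1)^{|z||y|}\{z,\{x,y\}\}+(-1)^{|y||x|}\{y,\{z,x\}\}$, sort the resulting terms by how many $\diamond$'s and how many brackets they contain, and observe that the purely-$\diamond$ part vanishes (after cyclic permutation) by the left post-Lie relation $(-1)^{|z||y|}z\diamond(y\diamond x)-y\diamond(z\diamond x)+(y\diamond z)\diamond x-(-1)^{|z||y|}(z\diamond y)\diamond x+[y,z]\diamond x=0$, the mixed $\diamond$-and-bracket part vanishes by the derivation relation $z\diamond[x,y]=[z\diamond x,y]+(-1)^{|z||x|}[x,z\diamond y]$, and the purely-bracket part vanishes by the superJacobi identity for $[\cdot,\cdot]$. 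For $(\mathcal{A},\bullet)$: commutativity of $x\bullet y=x\succ y+(-1)^{|x||y|}y\succ x+x\cdot y$ is immediate from that of $\cdot$, and associativity is obtained by expanding $(x\bullet y)\bullet z$ and $x\bullet(y\bullet z)$ and reducing each summand via the two dendriform identities $x\succ(y\succ z)=(x\succ y+(-1)^{|x||y|}y\succ x+x\cdot y)\succ z$ and $(x\succ y)\cdot z=x\succ(y\cdot z)$ together with associativity of $\cdot$.

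The crux is the Leibniz rule $\{x,y\bullet z\}=\{x,y\}\bullet z+(-1)^{|x||y|}y\bullet\{x,z\}$. My plan is to substitute $y\bullet z=y\succ z+(-1)^{|y||z|}z\succ y+y\cdot z$ and $\{x,w\}=x\diamond w-(-1)^{|x||w|}w\diamond x+[x,w]$ on the left, so that $\{x,y\bullet z\}$ decomposes into the $\diamond$-terms $x\diamond(y\succ z)$, $x\diamond(z\succ y)$, $x\diamond(y\cdot z)$, the reversed term $(y\succ z+(-1)^{|y||z|}z\succ y+y\cdot z)\diamond x$, and the bracket terms $[x,y\succ z]$, $[x,z\succ y]$, $[x,y\cdot z]$; then rewrite each using its matching post-Poisson compatibility relation --- the $x\diamond(\,\cdot\succ\cdot\,)$-terms by the fifth relation, the reversed term by the fourth, $x\diamond(y\cdot z)$ by the third, the $[x,\,\cdot\succ\cdot\,]$-terms by the second, and $[x,y\cdot z]$ by the first. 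Expanding the right-hand side the same way (every $\bullet$ and every $\{,\}$ replaced by its definition, then the dendriform and commutative-associative identities applied) and comparing, all terms cancel. I expect this to be the only genuinely laborious point; it is pure sign-tracking, with every cancellation forced by one of the five axioms and no auxiliary identity required.

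For the $P$-module Poisson superalgebra claim I would check the four defining conditions for $(\mathcal{A},[\cdot,\cdot],\cdot,L_{\diamond},L_{\succ})$ over $(P(\mathcal{A}),\{,\},\bullet)$, each reducing to an axiom. First, $(\mathcal{A},[\cdot,\cdot],\cdot)$ is a Poisson superalgebra, since its compatibility condition is exactly the first post-Poisson axiom $[x,y\cdot z]=[x,y]\cdot z+(-1)^{|x||y|}y\cdot[x,z]$. Next, $(\mathcal{A},[\cdot,\cdot],L_{\diamond})$ is a $P$-Lie superalgebra because $L_{\diamond}(x)$ is a derivation of $[\cdot,\cdot]$ (the derivation post-Lie axiom) and $L_{\diamond}(\{x,y\})=L_{\diamond}(x)L_{\diamond}(y)-(-1)^{|x||y|}L_{\diamond}(y)L_{\diamond}(x)$, which, after substituting the definition of $\{x,y\}$, is the left post-Lie axiom; and $(\mathcal{A},\cdot,L_{\succ})$ is a $P$-module superalgebra because $L_{\succ}(x)(u\cdot v)=(L_{\succ}(x)u)\cdot v$ is the fourth dendriform axiom while $L_{\succ}(x\bullet y)=L_{\succ}(x)L_{\succ}(y)$ is, after substituting the definition of $\bullet$, the third dendriform axiom. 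That $(L_{\diamond},L_{\succ},\mathcal{A})$ is a module of the Poisson superalgebra $P(\mathcal{A})$ --- i.e. that (\ref{super rep 1}) and (\ref{super rep 2}) hold --- becomes, after substituting the definitions of $\bullet$ and $\{,\}$, precisely the fourth and the fifth post-Poisson compatibility axioms. Finally the two remaining defining equations become the third post-Poisson axiom $x\diamond(y\cdot z)=(x\diamond y)\cdot z+(-1)^{|x||y|}y\cdot(x\diamond z)$ and, after using commutativity of $\cdot$ to relocate the Koszul sign, the second one. Hence every required identity is literally one of the post-Poisson axioms, and the proof is finished modulo the sign checks sketched above.
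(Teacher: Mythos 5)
Your proposal is correct and follows essentially the same route as the paper: direct expansion of the super-Jacobi cyclic sum, of associativity, and of the Leibniz rule, with each group of terms cancelled by the corresponding post-Lie, dendriform-trialgebra, or post-Poisson compatibility axiom. You in fact go slightly further than the paper, whose written proof stops after establishing that $(\mathcal{A},\{,\},\bullet)$ is a Poisson superalgebra and never verifies the claim that $(\mathcal{A},[\cdot,\cdot],\cdot,L_{\diamond},L_{\succ})$ is a $P$-module Poisson superalgebra; your axiom-by-axiom matching (including the Koszul-sign relocation via commutativity of $\cdot$ for the last module condition) is exactly what is needed to fill that omission.
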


\begin{proof} For any $x, y, z\in\mathcal{A}$, we will check that  $(\mathcal{A}, \{,\})$ is a Lie superalgebra. In fact, we have
\begin{align*}
\{x,y\}&= x\diamond y-(-1)^{{|x|}{|y|}}y\diamond x+[x,y]\\
&=-(-1)^{{|x|}{|y|}}y\diamond x+x\diamond y-(-1)^{{|x|}{|y|}}[y,x]\\
&=-(-1)^{{|x|}{|y|}}(y\diamond x-(-1)^{{|x|}{|y|}}x\diamond y+[y,x])\\
&=-(-1)^{{|x|}{|y|}}\{y,x\}.
\end{align*}
So the skew-supersymmetry holds. For the superJacobi identity, we have
\begin{align*}
&(-1)^{{|x|}{|z|}}\{x, \{y, z\}\}+(-1)^{{|y|}{|x|}}\{y, \{z, x\}\}+(-1)^{{|z|}{|y|}}\{z, \{x, y\}\}\\
&=(-1)^{{|x|}{|y|}}\{x, y\diamond z-(-1)^{{|y|}{|z|}}z\diamond y+[y, z]\}+
(-1)^{{|y|}{|x|}}\{y, z\diamond x-(-1)^{{|z|}{|x|}}z\diamond x+[z, x]\}\\
&+(-1)^{{|z|}{|y|}}\{z, x\diamond y-(-1)^{{|x|}{|y|}}y\diamond x+[x, y]\}\\
&=(-1)^{{|x|}{|z|}}x\diamond(y\diamond z)-(-1)^{{|x|}{|z|}+{|y|}{|z|}}x\diamond(z\diamond y)+(-1)^{{|x|}{|z|}}x\diamond[y,z]-(-1)^{{|x|}{|y|}}(y\diamond z)\diamond x\\
&+(-1)^{{|x|}{|y|}+{|y|}{|z|}}(z\diamond y)\diamond x-(-1)^{{|x|}{|y|}}[y,z]\diamond x+(-1)^{{|x|}{|z|}}[x, y\diamond z]-(-1)^{{|x|}{|z|}+{|y|}{|z|}}[x, z\diamond y]\\
&+(-1)^{{|x|}{|z|}}[x, [y, z]]+(-1)^{{|x|}{|y|}}y\diamond(z\diamond x)-(-1)^{{|x|}{|z|}+{|x|}{|y|}}y\diamond(x\diamond z)+(-1)^{{|x|}{|y|}}y\diamond[z,x]\\
&-(-1)^{{|y|}{|z|}}(z\diamond x)\diamond y+(-1)^{{|x|}{|z|}+{|y|}{|z|}}(x\diamond z)\diamond y-(-1)^{{|y|}{|z|}}[z,x]\diamond y+(-1)^{{|x|}{|y|}}[y, z\diamond x]\\
&-(-1)^{{|x|}{|y|}+{|x|}{|z|}}[y, x\diamond z]+(-1)^{{|x|}{|y|}}[y, [z, x]]+(-1)^{{|z|}{|y|}}z\diamond(x\diamond y)-(-1)^{{|x|}{|y|}+{|z|}{|y|}}z\diamond(y\diamond x)\\
&+(-1)^{{|y|}{|z|}}z\diamond [x, y]-(-1)^{{|x|}{|z|}}(x\diamond y)\diamond z+(-1)^{{|x|}{|y|}+{|x|}{|z|}}(y\diamond x)\diamond z-(-1)^{{|x|}{|z|}}[x, y]\diamond z\\
&+(-1)^{{|z|}{|y|}}[z, x\diamond y]-(-1)^{{|z|}{|y|}+{|x|}{|y|}}[z, y\diamond x]+(-1)^{{|z|}{|y|}}[z, [x, y]]
=0.
\end{align*}
Next, we check that $(\mathcal{A}, \bullet)$ is a commutative associative superalgebra. In fact, for any $x, y, z\in\mathcal{A}$, also one may check directly that :
$x\bullet y=x\succ y+(-1)^{{|x|}{|y|}}y\succ x+ x\cdot y=(-1)^{{|x|}{|y|}}(y\succ x+(-1)^{{|x|}{|y|}}x\succ y+y\cdot x)=(-1)^{{|x|}{|y|}}y\bullet x.$\\
Now we check the following equality for any $x, y, z\in\mathcal{A}$, $x\bullet(y\bullet z)=(x\bullet y)\bullet z$. In fact, we have
\begin{align*}
&x\bullet(y\bullet z)=x\bullet(y\succ z+(-1)^{{|y|}{|z|}}z\succ y+ y\cdot z)\\
&=x\succ\Big(y\succ z+ (-1)^{{|y|}{|z|}}z\succ y+y\cdot z\Big)+(-1)^{{|x|}{|y|}+{|x|}{|z|}}\Big(y\succ z+ (-1)^{{|y|}{|z|}}z\succ y+y\cdot z\Big)\succ x\\
&+x\cdot\Big(y\succ z+ (-1)^{{|y|}{|z|}}z\succ y+y\cdot z\Big)\\
&=x\succ(y\succ z)+(-1)^{{|y|}{|z|}}x\succ(z\succ y)+x\succ(y\cdot z)+(-1)^{{|x|}{|y|}+{|x|}{|z|}}y\succ(z\succ x)+x\cdot(y\succ z)\\
&+(-1)^{{|y|}{|z|}}x\cdot(z\succ y)+x\cdot(y\cdot z)\\
&=x\succ(y\succ z)+(-1)^{{|x|}{|z|}+{|y|}{|z|}}z\succ(x\succ y)+(x\succ y)\cdot z+(-1)^{{|x|}{|z|}+{|y|}{|z|}+{|x|}{|y|}}z\succ(y\succ x)\\
&+(-1)^{{|x|}{|y|}}(y\succ x)\cdot z+(-1)^{{|x|}{|z|}+{|y|}{|z|}}z\succ(y\cdot z)+(x\cdot y)\cdot z\\
&=(x\bullet y)\bullet z.
\end{align*}
Finally, we check the condition :\ \ \
$\{x, y\bullet z\}=\{x, y\}\bullet z+(-1)^{{|x|}{|y|}}y\bullet\{x, z\}.$ In fact, we have
\begin{align*}
&\{x, y\bullet z\}=\{x, y\succ z+(-1)^{{|y|}{|z|}}z\succ y+ y\cdot z\}=x\diamond\Big(y\succ z+ (-1)^{{|y|}{|z|}}z\succ y+y\cdot z\Big)\\
&-(-1)^{{|x|}{|y|}+{|x|}{|z|}}\Big(y\succ z+ (-1)^{{|y|}{|z|}}z\succ y+y\cdot z\Big)\diamond x+[x, y\succ z+(-1)^{{|y|}{|z|}}z\succ y+ y\cdot z]\\
&=x\diamond(y\succ z)+(-1)^{{|y|}{|z|}}x\diamond(z\succ y)+x\diamond(y\cdot z)-(-1)^{{|x|}{|y|}+{|x|}{|z|}+{|y|}{|z|}}z\succ(y\diamond x)-(-1)^{{|x|}{|y|}+{|x|}{|z|}}y\succ(z\diamond x)\\
&+[x, y\succ z]+(-1)^{{|y|}{|z|}}[x, z\succ y]+[x, y\cdot z]\\
&=(x\diamond y)\succ z-(-1)^{{|x|}{|y|}}(y\diamond x)\succ z+[x, y]\succ z+(-1)^{{|x|}{|z|}+{|y|}{|z|}}z\succ(x\diamond y)-(-1)^{{|x|}{|z|}+{|x|}{|y|}+{|y|}{|z|}}z\succ(y\diamond x)\\
&+(-1)^{{|x|}{|z|}+{|y|}{|z|}}z\succ[x, y]+(x\diamond y)\cdot z-(-1)^{{|x|}{|y|}}(y\diamond x)\cdot z+[x, y]\cdot z+(-1)^{{|x|}{|y|}}y\succ(x\diamond z)\\
&-(-1)^{{|x|}{|y|}+{|x|}{|z|}}y\succ(z\diamond x)+(-1)^{{|x|}{|y|}}y\succ[x, z]+(-1)^{{|y|}{|z|}}(x\diamond z)\succ y-(-1)^{{|y|}{|z|}+{|x|}{|z|}}(z\diamond x)\succ y\\
&+(-1)^{{|y|}{|z|}}[x, z]\succ y+(-1)^{{|x|}{|y|}}y\cdot(x\diamond z)-(-1)^{{|x|}{|y|}+{|x|}{|z|}}y\cdot(z\diamond x)+(-1)^{{|x|}{|y|}}y\cdot[x, z]\\
&=\{x, y\}\bullet z+(-1)^{{|x|}{|y|}}y\bullet\{x, z\},
\end{align*}
as desired. Thus $(\mathcal{A}, \{,\}, \bullet)$ is a Poisson superalgebra. 
\end{proof}

\begin{df} Let $(\mathcal{A}, \{,\}, \bullet)$ be a Poisson superalgebra and $(V, \{,\}_1, \bullet_{1}, \psi_{\{,\}}, \psi_\bullet)$ be an $\mathcal{A}$-module Poisson superalgebra. An even linear map $T:V \longrightarrow \mathcal{A}$ is called an $\mathcal{O}$-operator of weight $\lambda\in \mathbb{K}$ associated to $(V,\{,\}_1, \bullet_{1},\psi_{\{,\}}, \psi_\bullet)$ if for any $u, v\in V$
\begin{eqnarray}\label{fin super 1}
\{T(u),T(v)\}&=& T\Big(\psi_{\{,\}}(T(u))v-(-1)^{{|u|}{|v|}}\psi_{\{,\}}(T(v))u +\lambda\{u,v\}_1 \Big),
\end{eqnarray}
\begin{eqnarray}\label{fin super 2}
T(u) \bullet T(v)&=& T\Big(\psi_\bullet(T(u))v+(-1)^{{|u|}{|v|}}\psi_\bullet(T(v))u+\lambda u\bullet_{1}v \Big).
\end{eqnarray}
When $(V, \{,\}_1, \bullet_{1}, \psi_{\{,\}}, \psi_\bullet)
=(\mathcal{A}, \{,\}, \bullet, ad_{\{,\}}, L_\bullet)$, Eqs (\ref{fin super 1}) and (\ref{fin super 2}) become
\begin{eqnarray}\label{fin superrrr 1}
\{T(u),T(v)\}= T\Big(\{T(u),v \}+\{u, T(v)\} +\lambda\{u,v\} \Big),
\end{eqnarray}
\begin{eqnarray}\label{fin superrrr 2}
T(u) \bullet T(v)&=& T\Big(T(u)\bullet v+u\bullet T(v)+\lambda u\bullet v \Big),
\end{eqnarray}
respectively. Equations (\ref{fin superrrr 1}) and (\ref{fin superrrr 2}) imply that $T:\mathcal{A}\longrightarrow \mathcal{A}$ is  a Rota-Baxter operator of weight $\lambda\in \mathbb{K}$ on the Lie superalgebra $(\mathcal{A},\{,\})$ and on the commutative associative superalgebra $(\mathcal{A},\bullet)$, respectively.
\end{df}

\begin{thm} Let $(\mathcal{A}, \{,\}, \bullet)$ be a Poisson superalgebra and $(V, \{,\}_1, \bullet_{1}, \psi_{\{,\}}, \psi_\bullet)$ be an $\mathcal{A}$-module Poisson superalgebra. Let the  even linear map $T:V \longrightarrow \mathcal{A}$ be an $\mathcal{O}$-operator of weight $\lambda\in \mathbb{K}$ associated to $(V,\{,\}_1, \bullet_{1},\psi_{\{,\}}, \psi_\bullet)$. Define four new even bilinear operations $[\cdot ,\cdot ], \diamond, \cdot, \succ: V\otimes V \longrightarrow V$ on $V$ as follows:
\begin{eqnarray}
\ \ \ \ \ \ [u, v]=\lambda\{u, v\}_{1}, \ \ \  u\diamond v=\psi_{\{,\}}(T(u))v, \ \ \ u\cdot v=\lambda u\bullet_{1} v \ \ \ u\succ v=\psi_\bullet(T(u))v, \ \ \ \forall u, v\in V.
\end{eqnarray}
Then $(V, [\cdot ,\cdot ], \diamond, \cdot, \succ)$ is a post-Poisson superalgebra and $T$ is a homomorphism of Poisson superalgebras
from the associated Poisson superalgebra $P(V)$ of $(V, [\cdot ,\cdot ], \diamond, \cdot, \succ)$ to $(\mathcal{A}, \{,\}, \bullet)$.
\end{thm}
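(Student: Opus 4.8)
The plan is to verify the defining identities of a post-Poisson superalgebra for $(V,[\cdot ,\cdot ],\diamond,\cdot,\succ)$ in three layers, and then to read off the homomorphism property directly from the $\mathcal{O}$-operator equations. Since $T$, $\psi_{\{,\}}$ and $\psi_{\bullet}$ are even, we have $|T(u)|=|u|$ and $|u\diamond v|=|u\succ v|=|u|+|v|$, so all $\mathbb{Z}_2$-signs behave as in the expected graded way. The recurring device is this: for the ``symmetrized'' combinations one recognizes $x\diamond z-(-1)^{|x||z|}z\diamond x+[x,z]=\{x,z\}_{P(V)}$ and $x\succ y+(-1)^{|x||y|}y\succ x+x\cdot y=x\bullet_{P(V)}y$, whose images under $T$ are $\{T(x),T(z)\}$ and $T(x)\bullet T(y)$ by Eqs.~(\ref{fin super 1}) and (\ref{fin super 2}) respectively; substituting these and then applying an identity valid in $(\mathcal{A},\{,\},\bullet)$ or in its module $V$ closes the computation.

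First, $(V,[\cdot ,\cdot ],\diamond)$ is a post-Lie superalgebra. Skew-supersymmetry and the super-Jacobi identity for $[\cdot ,\cdot ]=\lambda\{\cdot ,\cdot \}_1$ are inherited, up to the scalars $\lambda$ and $\lambda^2$, from the Lie superalgebra $(V,\{,\}_1)$; the identity $z\diamond[x,y]=[z\diamond x,y]+(-1)^{|z||x|}[x,z\diamond y]$ is exactly the derivation property of $\psi_{\{,\}}(T(z))$ with respect to $\{,\}_1$, which holds because $(V,\{,\}_1,\psi_{\{,\}})$ is a $\mathcal{P}$-Lie superalgebra; and the remaining post-Lie identity follows from the device above together with the fact that $\psi_{\{,\}}$ is a Lie representation of $(\mathcal{A},\{,\})$. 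Dually, $(V,\cdot,\succ)$ is a commutative dendriform supertrialgebra: commutativity and associativity of $\cdot=\lambda\bullet_1$ come from $(V,\bullet_1)$; the identity $(x\succ y)\cdot z=x\succ(y\cdot z)$ is the relation $\psi_{\bullet}(T(x))(y\bullet_1 z)=(\psi_{\bullet}(T(x))y)\bullet_1 z$ of the $\mathcal{P}$-module superalgebra structure; and $x\succ(y\succ z)=(x\succ y+(-1)^{|x||y|}y\succ x+x\cdot y)\succ z$ follows from the device above and $\psi_{\bullet}(a\bullet b)=\psi_{\bullet}(a)\psi_{\bullet}(b)$.

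The substantive part is the five mixed compatibility identities linking $([\cdot ,\cdot ],\diamond)$ to $(\cdot,\succ)$. I would treat them one by one, rewriting each side through $\psi_{\{,\}}$, $\psi_{\bullet}$, $T$ and reducing to an operator identity. Concretely: $[x,y\cdot z]=[x,y]\cdot z+(-1)^{|x||y|}y\cdot[x,z]$ reduces to the Leibniz rule for $\{,\}_1$ over $\bullet_1$; $x\diamond(y\cdot z)=(x\diamond y)\cdot z+(-1)^{|x||y|}y\cdot(x\diamond z)$ reduces to the $\mathcal{P}$-module Poisson relation $\psi_{\{,\}}(x)(u\bullet_1 v)=(\psi_{\{,\}}(x)u)\bullet_1 v+(-1)^{|x||u|}u\bullet_1(\psi_{\{,\}}(x)v)$; $[x,z\succ y]=(-1)^{|x||z|}z\succ[x,y]-(-1)^{|y||z|+|x||y|+|x||z|}y\cdot(z\diamond x)$ is obtained by solving the other $\mathcal{P}$-module Poisson relation $\psi_{\bullet}(x)\{u,v\}_1=(\psi_{\{,\}}(x)u)\bullet_1 v+(-1)^{|x||u|}\{u,\psi_{\bullet}(x)v\}_1$ for $\{u,\psi_{\bullet}(x)v\}_1$ and then using commutativity of $\cdot$; the identity $(y\succ z+(-1)^{|y||z|}z\succ y+y\cdot z)\diamond x=(-1)^{|y||z|}z\succ(y\diamond x)+y\succ(z\diamond x)$ follows by applying Eq.~(\ref{fin super 2}) to turn the left side into $\psi_{\{,\}}(T(y)\bullet T(z))x$ and then invoking Eq.~(\ref{super rep 1}) with $x=T(y)$, $y=T(z)$; and $x\diamond(z\succ y)=(-1)^{|x||z|}z\succ(x\diamond y)+(x\diamond z-(-1)^{|x||z|}z\diamond x+[x,z])\succ y$ follows by applying Eq.~(\ref{fin super 1}) to turn the parenthesized term into $T(\{x,z\}_{P(V)})=\{T(x),T(z)\}$ and then invoking Eq.~(\ref{super rep 2}) with $x=T(x)$, $y=T(z)$. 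The one genuine obstacle in the whole argument is keeping the $\mathbb{Z}_2$-sign exponents straight across these five reductions; once the $\mathcal{O}$-operator relation is used to push $T$ through a bracket or a product, each identity collapses to one of the module or representation axioms.

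Finally, by the construction of the associated Poisson superalgebra the operations of $P(V)$ are $\{u,v\}_{P(V)}=\psi_{\{,\}}(T(u))v-(-1)^{|u||v|}\psi_{\{,\}}(T(v))u+\lambda\{u,v\}_1$ and $u\bullet_{P(V)}v=\psi_{\bullet}(T(u))v+(-1)^{|u||v|}\psi_{\bullet}(T(v))u+\lambda u\bullet_1 v$, so Eqs.~(\ref{fin super 1}) and (\ref{fin super 2}) say precisely that $T(\{u,v\}_{P(V)})=\{T(u),T(v)\}$ and $T(u\bullet_{P(V)}v)=T(u)\bullet T(v)$; hence $T\colon P(V)\to(\mathcal{A},\{,\},\bullet)$ is a homomorphism of Poisson superalgebras, as required.
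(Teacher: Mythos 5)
Your proposal is correct and follows essentially the same route as the paper: verify the post-Lie and commutative dendriform supertrialgebra axioms by pushing $T$ through the symmetrized combinations via the $\mathcal{O}$-operator identities and then invoking the representation/module axioms, with the homomorphism property read off directly from Eqs.~(\ref{fin super 1})--(\ref{fin super 2}). In fact you go further than the paper on the five mixed compatibility identities, which the paper dispatches with ``the proof of other cases is similar''; your explicit reductions of each one to a specific module or representation axiom are the right ones.
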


\begin{proof} First we check that $(V, [\cdot ,\cdot ], \diamond)$ is a post-Lie superalgebra. For any $u, v, w\in V$, it is easy to obtain 
\begin{align*}
&[u, v]=-(-1)^{{|u|}{|v|}}[v, u],\\
&(-1)^{{|u|}{|w|}}[u,[v,w]]+(-1)^{{|w|}{|v|}}[w,[u,v]]
+(-1)^{{|v|}{|u|}}[v,[w,u]]=0.
\end{align*}
So it is sufficient to verify the following conditions:
\begin{align*}
&(-1)^{{|w|}{|v|}} w\diamond (v\diamond u)- v\diamond (w\diamond u)+ (v\diamond w)\diamond u - (-1)^{{|w|}{|v|}} (w \diamond v)\diamond u +[v,w]\diamond u = 0,\\
&w\diamond[u,v]-[w\diamond u,v]-(-1)^{{|w|}{|u|}}[u,w\diamond v]= 0.
\end{align*}
In fact, we have
\begin{align*}
&(-1)^{{|w|}{|v|}} w\diamond (v\diamond u)- v\diamond (w\diamond u)+ (v\diamond w)\diamond u - (-1)^{{|w|}{|v|}}(w \diamond v)\diamond u +[v,w]\diamond u = 0,\\
&=(-1)^{{|w|}{|v|}}\psi_{\{,\}}(T(w))\psi_{\{,\}}(T(v))u
-\psi_{\{,\}}(T(v))\psi_{\{,\}}(T(w))u+\psi_{\{,\}}(T(\psi_{\{,\}}(T(v))w))u\\
&-(-1)^{{|w|}{|v|}}\psi_{\{,\}}(T(\psi_{\{,\}}(T(w))v))u
+\psi_{\{,\}}(T(\lambda\{v, w\}_{1}))u\\
&=(-1)^{{|w|}{|v|}}\psi_{\{,\}}(T(w))\psi_{\{,\}}(T(v))u
-\psi_{\{,\}}(T(v))\psi_{\{,\}}(T(w))u+\psi_{\{,\}}(\{T(v),T(w)\})u\\
&=0,
\end{align*}
and
\begin{align*}
&w\diamond[u,v]-[w\diamond u,v]-(-1)^{{|w|}{|u|}}[u,w\diamond v]\\
&=\lambda\Big(w\diamond \{u, v\}_{1}-\{w\diamond u, v\}_{1}-(-1)^{{|w|}{|u|}}\{u,w\diamond v\}_{1}\Big)\\
&=\lambda\Big(\psi_{\{,\}}(T(w))\{u, v\}_{1}-\{\psi_{\{,\}}(T(w))u, v\}_{1}-(-1)^{{|w|}{|u|}}\{u,\psi_{\{,\}}(T(w))v\}_{1}\Big)\\
&=0,
\end{align*}
as required.\\
Next we will check that $(V,\cdot, \succ)$ is a commutative dendriform supertrialgebra.  For any $u, v, w\in V$, it is easy to obtain that $u\cdot v= (-1)^{{|u|}{|v|}} v\cdot u$, and
$(u\cdot v)\cdot w = u\cdot (v\cdot w)$. So we need to check the following conditions:
\begin{align*}
&u\succ (v\succ w)=\Big(u\succ v+(-1)^{{|u|}{|v|}} v\succ u+u\cdot v\Big)\succ w,\\
&(u\succ v)\cdot w=u\succ (v\cdot w).
\end{align*}
For the first equality, we calculate
\begin{align*}
&\Big(u\succ v+(-1)^{{|u|}{|v|}} v\succ u+u\cdot v\Big)\succ w\\
&=\Big(\psi_\bullet(T(u))v+(-1)^{{|u|}{|v|}}\psi_\bullet(T(v))u+\lambda u\bullet_{1} v\Big)\succ w\\
&=\psi_\bullet\Big(T\Big(\psi_\bullet(T(u))v
+(-1)^{{|u|}{|v|}}\psi_\bullet(T(v))u+\lambda u\bullet_{1} v)\Big)\Big)w\\
&=\psi_\bullet(T(u))\psi_\bullet(T(v))w\\
&=u\succ (v\succ w).
\end{align*}
For the second equality, we have
\begin{align*}
(u\cdot v)\cdot w=\lambda\psi_\bullet(T(u))v\bullet_{1} w=\psi_\bullet(T(u))(v\cdot w)=u\succ (v\cdot w).
\end{align*}
The proof of other cases is similar. Then $(V, [\cdot ,\cdot ], \diamond, \cdot, \succ)$ becomes a post-Poisson superalgebra and $T$ is a homomorphism of Poisson superalgebras from the associated Poisson superalgebra $P(V)$ of $(V, [\cdot ,\cdot ], \diamond, \cdot, \succ)$ to $(\mathcal{A}, \{,\}, \bullet)$.

\end{proof}

\begin{df} A coboundary Poisson superbialgebra $(\mathcal{A}, \{,\}, \bullet, \delta, \Delta, r)$ is called quasitriangular if $r$ is a solution of PYBE, that is, it satisfies both CYBE and AYBE.
\end{df}
\begin{rem}
1) Obviously, a coboundary Poisson superbialgebra $(\mathcal{A}, \{,\}, \bullet, \delta, \Delta, r)$ is quasitriangular if and only if $(\mathcal{A}, \{,\}, \delta, r)$ as a coboundary Lie superbialgebra and $(\mathcal{A}, \bullet, \Delta, r)$ as a coboundary infinitesimal superbialgebra are both quasitriangular.\\
2) If $(\mathcal{A}, \{,\}, \bullet, \delta, \Delta, r)$ is a quasitriangular Poisson superbialgebra. Then by Theorem \ref{cobod Poisson}, $r$ also satisfies the
following equations:
\begin{align}
(ad_{\{,\}}(x)\otimes id+(-1)^{{|x|}{|r_{1}|}}id\otimes ad_{\{,\}}(x))(r+\tau(r))=(L_{\bullet}(x)\otimes id-(-1)^{{|x|}{|r_{1}|}}id\otimes L_{\bullet}(x))(r+\tau(r))=0,
\end{align}
 for  all  $x\in \mathcal{A}$.
\end{rem}

Let $V=V_{\bar{0}}\oplus V_{\bar{1}}$ be a finite-dimensional superspace and $r\in V\otimes V$. Then $r$ can be identified as an even linear map from $V^{\ast}$ to $V$ which we still denote by $r$ through
\begin{eqnarray}\label{fin cccb}
\langle b, r(a)\rangle=\langle a\otimes b, r\rangle,  \ \ \ \forall \  a, b\in V^{\ast}.
\end{eqnarray}
Define a even linear map $r':V^{\ast} \longrightarrow V$ by
\begin{eqnarray}
\langle a, r'(b)\rangle=\langle r, a\otimes b\rangle,  \ \ \ \forall \  a, b\in V^{\ast}.
\end{eqnarray}

We call
\begin{eqnarray}\label{fin super}
\alpha=\alpha_{r}=\frac{r-r'}{2}, \ \ \ \ \ \beta=\beta_{r}=\frac{r+r'}{2},
\end{eqnarray}

the skew-supersymmetry part and the supersymmetric part of $r$ respectively. Therefore, we have the following theorem.

\begin{thm}\label{quasitriang} Let $(\mathcal{P}, \{,\}, \bullet, \delta, \Delta, r)$ be a quasitriangular Poisson superbialgebra. Let the even linear map $\beta$ be
defined by Eq. \ref{fin super}. Define two new even bilinear operations $[\cdot ,\cdot ], \cdot : \mathcal{P}\otimes \mathcal{P} \longrightarrow \mathcal{P}$ as follows:
\begin{eqnarray}
[a, b]=-2ad_{\{,\}}^{\ast}(\beta(a))b, \ \ \ \ \ a\cdot b=2L^{\ast}_{\bullet}(\beta(a))b, \ \ \forall \  a, b\in \mathcal{P}^{\ast}.
\end{eqnarray}
Then $(\mathcal{P}^{\ast}, [\cdot ,\cdot ], \cdot,  ad_{\{,\}}^{\ast}, -L^{\ast}_{\bullet})$ becomes a $\mathcal{P}$-module Poisson superalgebra of $(\mathcal{A}, \{,\}, \bullet)$. Moreover, regarded as an even linear map from $\mathcal{P}^{\ast}$ to $\mathcal{P}$ through Eq. fin \ref{fin cccb} is an $\mathcal{O}$-operator of weight $1$ associated to $(\mathcal{P}^{\ast}, [\cdot ,\cdot ], \cdot,  ad_{\{,\}}^{\ast}, -L^{\ast}_{\bullet})$ that is,
\begin{eqnarray}
\{r(a), r(b)\}= r(ad_{\{,\}}^{\ast}(r(a))b-(-1)^{{|a|}{|b|}}ad_{\{,\}}^{\ast}(r(b))a+[a, b]), \ \ \ \ \ \forall \  a, b\in \mathcal{P}^{\ast},
\end{eqnarray}
\begin{eqnarray}
r(a) \bullet r(b)=r(-L^{\ast}_{\bullet}(r(a))b-(-1)^{{|a|}{|b|}}L^{\ast}_{\bullet}(r(b))+a \cdot b). \ \ \ \ \ \forall \  a, b\in \mathcal{P}^{\ast}
\end{eqnarray}
The following result establishes a close relation between a post-Poisson superalgebra and a quasitriangular Poisson superbialgebra.
\end{thm}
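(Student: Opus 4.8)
The plan is to split a quasitriangular Poisson superbialgebra along its two underlying structures and to deduce the two $\mathcal{O}$-operator identities from the classical (super) correspondences between quasitriangular structures and $\mathcal{O}$-operators, reassembling the pieces at the end through the compatibility axioms of a $\mathcal{P}$-module Poisson superalgebra. By the Remark following the definition of a quasitriangular Poisson superbialgebra, $(\mathcal{P},\{,\},\delta,r)$ is a quasitriangular Lie superbialgebra and $(\mathcal{P},\bullet,\Delta,r)$ is a quasitriangular infinitesimal superbialgebra, so $C(r)=0$ and $A(r)=0$ with $|r|=\bar 0$; moreover, by Theorem \ref{cobod Poisson}(1) the element $r+\tau(r)$ satisfies
\[(ad_{\{,\}}(x)\otimes id+(-1)^{|x||r_{1}|}id\otimes ad_{\{,\}}(x))(r+\tau(r))=0,\qquad (L_{\bullet}(x)\otimes id-(-1)^{|x||r_{1}|}id\otimes L_{\bullet}(x))(r+\tau(r))=0\]
for all $x\in\mathcal{P}$. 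Since, under the identification \ref{fin cccb}, the supersymmetric part $\beta$ of $r$ corresponds to $\tfrac12(r+\tau(r))$, these are precisely the invariance identities that let $\beta$ be moved past $ad_{\{,\}}$ and $L_{\bullet}$, and they will be the glue throughout.

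First I would handle the Lie side. Writing $r=\alpha+\beta$ with $\alpha$ skew-supersymmetric and $\beta$ supersymmetric, the super-analogue of the quasitriangular-Lie-bialgebra / $\mathcal{O}$-operator correspondence (adapting the ungraded argument to the $\mathbb{Z}_2$-graded setting) gives: the bracket $[a,b]=-2ad_{\{,\}}^{\ast}(\beta(a))b$ is skew-supersymmetric and satisfies the super-Jacobi identity on $\mathcal{P}^{\ast}$; the map $ad_{\{,\}}^{\ast}$ is a representation of $(\mathcal{P},\{,\})$ on $(\mathcal{P}^{\ast},[\cdot,\cdot])$ acting by derivations, so that $(\mathcal{P}^{\ast},[\cdot,\cdot],ad_{\{,\}}^{\ast})$ is a $\mathcal{P}$-Lie superalgebra; and
\[\{r(a),r(b)\}=r\big(ad_{\{,\}}^{\ast}(r(a))b-(-1)^{|a||b|}ad_{\{,\}}^{\ast}(r(b))a+[a,b]\big).\]
The last identity is obtained by pairing both sides with an arbitrary $c\in\mathcal{P}^{\ast}$, unfolding $ad_{\{,\}}^{\ast}$ through \ref{fin cccb}, and collecting terms via $C(r)=0$ together with the first invariance identity to cancel the residual $\beta$-contributions; the derivation property of $ad_{\{,\}}^{\ast}$ on $[\cdot,\cdot]$ uses the same invariance identity.

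Next, in the same spirit and using $A(r)=0$ with the second invariance identity, I would establish the associative side: the product $a\cdot b=2L_{\bullet}^{\ast}(\beta(a))b$ is commutative and associative on $\mathcal{P}^{\ast}$, the map $-L_{\bullet}^{\ast}$ is a representation of $(\mathcal{P},\bullet)$ making $(\mathcal{P}^{\ast},\cdot,-L_{\bullet}^{\ast})$ a $\mathcal{P}$-module superalgebra, and
\[r(a)\bullet r(b)=r\big(-L_{\bullet}^{\ast}(r(a))b-(-1)^{|a||b|}L_{\bullet}^{\ast}(r(b))a+a\cdot b\big).\]
Together with the fact that $(\mathcal{P}^{\ast},ad_{\{,\}}^{\ast},-L_{\bullet}^{\ast})$ is already a module of the Poisson superalgebra $\mathcal{P}$ (the Example of Section 2), these two steps supply conditions (1)--(3) in the definition of a $\mathcal{P}$-module Poisson superalgebra and both $\mathcal{O}$-operator identities \ref{fin super 1}--\ref{fin super 2} with $\lambda=1$ for $T=r$.

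It then remains to verify condition (4) of that definition, namely the two mixed compatibilities
\[ad_{\{,\}}^{\ast}(x)(a\cdot b)=(ad_{\{,\}}^{\ast}(x)a)\cdot b+(-1)^{|x||a|}a\cdot(ad_{\{,\}}^{\ast}(x)b),\qquad -L_{\bullet}^{\ast}(x)[a,b]=(ad_{\{,\}}^{\ast}(x)a)\cdot b+(-1)^{|x||a|}[a,-L_{\bullet}^{\ast}(x)b].\]
These are exactly where the compatibility between $\delta$ and $\Delta$ built into a Poisson superbialgebra intervenes: dualizing the Poisson-superbialgebra relations \ref{atttt}--\ref{aaattt}, using the Poisson-superalgebra axiom \ref{awww} for $(\mathcal{P},\{,\},\bullet)$ and the two invariance identities, and finally substituting the expressions for $[\cdot,\cdot]$ and $\cdot$ in terms of $\beta$, yields both equations after collecting the Koszul signs. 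I expect this last step — reconciling the $\beta$-twisted operations on $\mathcal{P}^{\ast}$ with the dualized compatibility conditions while keeping track of all signs — to be the \textbf{main obstacle}; the remainder is a sign-careful transcription of the known ungraded, single-structure arguments into the super setting.
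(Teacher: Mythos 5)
The paper states Theorem \ref{quasitriang} without any proof (no proof environment follows it; the authors evidently regard it as the routine super-graded transcription of the corresponding result for Poisson bialgebras in \cite{Che Bai}), so there is no in-paper argument to compare against. Your plan is the standard one for exactly that transcription, and its ingredients are the right ones: $C(r)=0$, $A(r)=0$, the two invariance identities for $r+\tau(r)$ from Theorem \ref{cobod Poisson}(1) (equivalently, the $ad$- and $L$-invariance of $\beta$), the Lie-side and associative-side quasitriangular/$\mathcal{O}$-operator correspondences, and the fact that $(\mathcal{P}^{\ast}, ad^{\ast}_{\{,\}}, -L^{\ast}_{\bullet})$ is already a module of the Poisson superalgebra. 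So the outline is sound.

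Two points deserve attention. First, the definition of a $\mathcal{P}$-module Poisson superalgebra requires $(V,\{,\}_1,\bullet_1)$ to be a Poisson superalgebra in its own right, so besides skew-supersymmetry/Jacobi for $[\cdot,\cdot]_\beta$ and commutativity/associativity for $\cdot_\beta$ you must also verify the Leibniz compatibility $[a,b\cdot c]=[a,b]\cdot c+(-1)^{|a||b|}b\cdot[a,c]$ on $\mathcal{P}^{\ast}$; your plan does not mention this third mixed identity, though it succumbs to the same tools. Second, your proposed derivation of the two mixed module conditions by ``dualizing (\ref{atttt})--(\ref{aaattt})'' is a detour and, taken literally, does not quite apply: those relations dualize to statements about $\delta^{\ast}$ and $\Delta^{\ast}$, i.e.\ about the full double-type operations on $\mathcal{P}^{\ast}$ induced by $r$, whereas the operations in the theorem are the $\beta$-twisted ones $-2ad^{\ast}_{\{,\}}(\beta(a))b$ and $2L^{\ast}_{\bullet}(\beta(a))b$, which coincide with $\delta^{\ast},\Delta^{\ast}$ only up to the skew/symmetric split of $r$. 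The direct route is shorter: expand, say, $ad^{\ast}_{\{,\}}(x)(a\cdot b)=2\,ad^{\ast}_{\{,\}}(x)L^{\ast}_{\bullet}(\beta(a))b$, commute the operators using the module identities (\ref{super rep 1})--(\ref{super rep 2}) for $(\mathcal{P}^{\ast}, ad^{\ast}_{\{,\}}, -L^{\ast}_{\bullet})$ (which hold because $\mathcal{P}$ is a Poisson superalgebra), and absorb the resulting $L^{\ast}_{\bullet}(\{x,\beta(a)\})$ term via the invariance identity $\beta\circ ad^{\ast}_{\{,\}}(x)=ad_{\{,\}}(x)\circ\beta$; the compatibility (\ref{atttt})--(\ref{aaattt}) never needs to be invoked beyond its consequence that $r$ satisfies the conditions of Theorem \ref{cobod Poisson}. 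With these two adjustments your sketch fills in to a complete proof.
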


\begin{thm} With the conditions and notations in Theorem \ref{quasitriang}, define four new even bilinear operations $[\cdot ,\cdot ], \diamond, \cdot, \succ : \mathcal{P}^{\ast}\otimes \mathcal{P}^{\ast} \longrightarrow \mathcal{P}^{\ast}$ on $\mathcal{P}^{\ast}$ as follows:
\begin{eqnarray}
[a, b]=-2ad_{\{,\}}^{\ast}(\beta(a))b, \ \ \ \ \ \ \  a\diamond b=ad_{\{,\}}^{\ast}(r(a))b,
\end{eqnarray}
\begin{eqnarray}
\ \ \ \ \ \ \ a \cdot b=2L^{\ast}_{\bullet}(\beta(a))b, \ \ \ \ \ \ \ \ \ \ \ \ \ a \succ b=-L^{\ast}_{\bullet}(r(a))b, \ \ \forall \  a, b\in \mathcal{P}^{\ast}.
\end{eqnarray}
Then $(\mathcal{P}^{\ast}, [\cdot ,\cdot ], \diamond, \cdot, \succ)$ becomes a post-Poisson superalgebra and $r$ is a homomorphism of Poisson
superalgebras from the associated Poisson superalgebra $\mathcal{P}(\mathcal{P}^{\ast})$ of $(\mathcal{P}^{\ast}, [\cdot ,\cdot ], \diamond, \cdot, \succ)$ to $(\mathcal{P}, \{,\}, \bullet)$.
\end{thm}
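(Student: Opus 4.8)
The plan is to obtain this as a direct specialization of the earlier theorem that turns an $\mathcal{O}$-operator of weight $\lambda$ into a post-Poisson superalgebra; both ingredients it needs are already available. By Theorem \ref{quasitriang}, the tuple $(\mathcal{P}^{\ast}, [\cdot,\cdot], \cdot, ad^{\ast}_{\{,\}}, -L^{\ast}_{\bullet})$, with $[a,b]=-2ad^{\ast}_{\{,\}}(\beta(a))b$ and $a\cdot b=2L^{\ast}_{\bullet}(\beta(a))b$, is a $\mathcal{P}$-module Poisson superalgebra, and $r$, viewed through (\ref{fin cccb}) as an even linear map $\mathcal{P}^{\ast}\to\mathcal{P}$, is an $\mathcal{O}$-operator of weight $1$ associated to it. (This uses that $\mathcal{P}$ is finite dimensional, as assumed in the setup preceding Theorem \ref{quasitriang}, so that the identification of $r$ with a map $\mathcal{P}^{\ast}\to\mathcal{P}$ makes sense.)

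First I would apply the $\mathcal{O}$-operator/post-Poisson construction with the data $\mathcal{A}=\mathcal{P}$, $V=\mathcal{P}^{\ast}$, $\{,\}_1=[\cdot,\cdot]$ and $\bullet_1=\cdot$ as in Theorem \ref{quasitriang}, $\psi_{\{,\}}=ad^{\ast}_{\{,\}}$, $\psi_{\bullet}=-L^{\ast}_{\bullet}$, $T=r$, and $\lambda=1$. That theorem then yields, with no further work, a post-Poisson superalgebra structure on $\mathcal{P}^{\ast}$ whose four operations are $\lambda\{u,v\}_1$, $\psi_{\{,\}}(T(u))v$, $\lambda\,u\bullet_1 v$, and $\psi_{\bullet}(T(u))v$, and it also asserts that $T$ is a homomorphism of Poisson superalgebras from the associated Poisson superalgebra of this post-Poisson superalgebra onto $(\mathcal{P},\{,\},\bullet)$.

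The last step is pure bookkeeping: substituting $\lambda=1$ together with the explicit formulas of Theorem \ref{quasitriang} identifies these four operations with $[a,b]=-2ad^{\ast}_{\{,\}}(\beta(a))b$, $a\diamond b=ad^{\ast}_{\{,\}}(r(a))b$, $a\cdot b=2L^{\ast}_{\bullet}(\beta(a))b$, and $a\succ b=-L^{\ast}_{\bullet}(r(a))b$ — exactly the operations in the statement — and the associated Poisson superalgebra is $\mathcal{P}(\mathcal{P}^{\ast})$; so both conclusions follow at once. There is no genuine difficulty here, since the whole content is carried by Theorem \ref{quasitriang} and the general $\mathcal{O}$-operator construction; the one point I would verify carefully is the normalization, namely that $r$ has weight exactly $1$ (not $2$ or $\frac{1}{2}$) as an $\mathcal{O}$-operator, so that $\lambda\{u,v\}_1$ and $\lambda\,u\bullet_1 v$ reproduce $[a,b]$ and $a\cdot b$ with no stray scalar factor — this is precisely what the final two displayed identities in Theorem \ref{quasitriang} guarantee.
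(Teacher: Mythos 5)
Your proposal is correct and is exactly the argument the paper intends: the theorem is a direct specialization of the $\mathcal{O}$-operator-to-post-Poisson construction, taking $V=\mathcal{P}^{\ast}$, $\psi_{\{,\}}=ad^{\ast}_{\{,\}}$, $\psi_{\bullet}=-L^{\ast}_{\bullet}$, $T=r$ and $\lambda=1$, with both hypotheses (the $\mathcal{P}$-module Poisson superalgebra structure and the weight-$1$ $\mathcal{O}$-operator property of $r$) supplied by Theorem \ref{quasitriang}. The paper in fact omits the proof entirely for precisely this reason, and your explicit check of the weight normalization against the two displayed identities of Theorem \ref{quasitriang} is the right detail to verify.
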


\end{document}